\newtheorem{thm}{Theorem}[section]
\newtheorem{prop}[thm]{Proposition}
\newtheorem{lemma}[thm]{Lemma}
\theoremstyle{definition}
\newtheorem{definition}[thm]{Definition}
\newtheorem{rem}[thm]{Remark}
\newtheorem*{ack}{Acknowledgment}
\newtheorem{condition}[thm]{Condition}
\newcommand{\sq}{\hfill $\square$}
\newcommand{\kah}{K\"{a}hler}
\newcommand{\dol}{\sqrt{-1}\partial \overline{\partial}}
\def\address#1#2{\begingroup
\noindent\parbox[t]{16cm}{%
\small{\scshape\ignorespaces#1}\par\vskip1ex
\noindent\small{\itshape E-mail address}%
\/: #2\par\vskip4ex}\hfill%
\endgroup}%
\title{Toward a construction of scalar-flat K\"{a}hler metrics on affine algebraic manifolds}
\author{Takahiro Aoi}
\date{}
\begin{document}
\maketitle

\footnote{ 
2010 \textit{Mathematics Subject Classification}.
Primary 53C25; Secondary 32Q15, 53C21.
}
\footnote{ 
\textit{Key words and phrases}. 
 constant scalar curvature {\kah} metrics, asymptotically conical geometry, Fredholm operators, {\kah} manifolds.
}

\begin{abstract}
Let $(X,L_{X})$ be an $n$-dimensional polarized manifold.\
Let $D$ be a smooth hypersurface defined by a holomorphic section of $L_{X}$.\
In this paper, we study the existence of a complete scalar-flat K\"{a}hler metric on $X \setminus D$ on the assumption that $D$ has a constant positive scalar curvature K\"{a}hler metric.\
\end{abstract}

\setcounter{tocdepth}{2}
\tableofcontents

\section{
Introduction}
\label{intro}
The existence of constant scalar curvature {\kah} (cscK) metrics on complex manifolds is a fundamental problem in {\kah} geometry.\
If a complex manifold is noncompact, there are many positive results in this problem.\
In 1979, Calabi \cite{Ca} showed that if a Fano manifold has a {\kah} Einstein metric, then there is a complete Ricci-flat {\kah} metric on the total space of the canonical line bundle.\
In addition, there exist following generalizations.\
In 1990, Bando-Kobayashi \cite{BK} showed that if a Fano manifold admits an anti-canonical smooth divisor which has a Ricci-positive {\kah} Einstein metric, then there exists a complete Ricci-flat {\kah} metric on the complement (see also \cite{TY}).\
Tian-Yau \cite{TY1} showed that if a Fano manifold admits an anti-canonical smooth divisor which has a Ricci-flat {\kah} metric, then there is a complete Ricci-flat {\kah} metric on the complement.\
In 2002, on the other hand, as a scalar curvature version of Calabi's result \cite{Ca}, Hwang-Singer \cite{HS} showed that if a polarized manifold has a nonnegative cscK metric, then the total space of the dual line bundle admits a complete scalar-flat {\kah} metric.\
However, a similar generalization of Hwang-Singer \cite{HS} like Bando-Kobayashi \cite{BK} and Tian-Yau \cite{TY1} is unknown since it is hard to solve a forth order nonlinear partial differential equation.\

In this paper, assuming the existence of a smooth hypersurface which admits a constant positive scalar curvature {\kah} metric, we study the existence of a complete scalar-flat {\kah} metric on the complement of this hypersurface.\

Let $(X,L_{X})$ be a polarized manifold of dimension $n$, i.e., $X$ is an $n$-dimensional compact complex manifold and $L_{X}$ is an ample line bundle over $X$.\
Assume that there is a smooth hypersurface $D \subset X$ with
$$
D \in |L_{X}|.
$$
Set an ample line bundle $L_{D} := \mathscr{O}(D)|_{D} = L_{X}|_{D}$ over $D$.\
Since $L_{X}$ is ample, there exists a Hermitian metric $h_{X}$ on $L_{X}$ which defines a {\kah} metric $\theta_{X}$ on $X$, i.e., the curvature form of $h_{X}$ multiplied by $\sqrt{-1}$ is positive definite.\
Then, the restriction of $h_{X}$ to $L_{D}$ defines also a {\kah} metric $\theta_{D}$ on $D$.\ 
Let $\hat{S}_{D}$ be the average of the scalar curvature $S(\theta_{D})$ of $\theta_{D}$ defined by
$$
\hat{S}_{D} := \frac{\displaystyle\int_{D} S(\theta_{D}) \theta_{D}^{n-1}}{\displaystyle\int_{D} \theta_{D}^{n-1}} = \frac{(n-1) c_{1}(K_{D}^{-1}) \cup c_{1}(L_{D})^{n-2}}{c_{1}(L_{D})^{n-1}},
$$
where $K_{D}^{-1}$ is the anti-canonical line bundle of $D$.\
Note that $\hat{S}_{D}$ is a topological invariant in the sense that it is representable in terms of Chern classes of the line bundles $K_{D}^{-1}$ and $L_{D}$.\
In this paper, we treat the following case :
\begin{equation}
\label{positivity 1}
\hat{S}_{D} > 0.
\end{equation}
Let $\sigma_{D} \in H^{0}(X,L_{X})$ be a defining section of $D$ and set $t := \log || \sigma_{D} ||_{h_{X}}^{-2}$.\
Following \cite{BK}, we can define a complete {\kah} metric $\omega_{0}$ by
$$
\omega_{0}:= \frac{n(n-1)}{\hat{S}_{D}}\dol \exp \left(\frac{\hat{S}_{D}}{n(n-1)}t\right)\\
$$
on the noncompact complex manifold $X \setminus D$.\
In addition, since $(X \setminus D, \omega_{0})$ is of asymptotically conical geometry (see \cite{BK} or Section 4 of this paper), we can define the weighted Banach space $C^{k,\alpha}_{\delta} = C^{k,\alpha}_{\delta}(X \setminus D)$ for $k \in \mathbb{Z}_{\geq 0}, \alpha \in (0,1)$ and with a weight $\delta \in \mathbb{R}$ with respect to the distance function $r$ defined by $\omega_{0}$ from some fixed point in $X \setminus D$.\
It follows from the construction of $\omega_{0}$ that $S(\omega_{0}) = O(r^{-2})$ near $D$.\

\medskip
The cscK condition implies the following stronger decay property.\

\begin{thm}
\label{scalar curvature decay}
If $\theta_{D}$ is a constant positive scalar curvature {\kah} metric on $D$, i.e., $S(\theta_{D}) = \hat{S}_{D} > 0,$
we have
$$
S(\omega_{0}) = O(r^{-2 -2n(n-1)/\hat{S}_{D}})
$$
as $r \to \infty$.\
\end{thm}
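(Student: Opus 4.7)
The plan is to compute $S(\omega_{0})$ directly from the Calabi-type ansatz and extract its leading asymptotic as $r \to \infty$, showing that the specific choice $a := \hat{S}_{D}/(n(n-1))$ forces the leading $r^{-2}$-coefficient to equal $S(\theta_{D}) - \hat{S}_{D}$, which vanishes by the cscK hypothesis.

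First, I would derive global identities. Since $\dol t = \theta_{X}$ on $X \setminus D$, a short computation yields the decomposition $\omega_{0} = s\,\theta_{X} + as\sqrt{-1}\,\partial t \wedge \overline{\partial}t$ with $s := e^{at}$. As $\sqrt{-1}\,\partial t \wedge \overline{\partial}t$ has vanishing exterior square, binomial expansion gives $\omega_{0}^{n} = s^{n}(1 + aq)\theta_{X}^{n}$ with $q := |\partial t|^{2}_{\theta_{X}}$, and hence
$$
\text{Ric}(\omega_{0}) = \text{Ric}(\theta_{X}) - na\,\theta_{X} - \dol \log(1 + aq),
$$
so $S(\omega_{0}) = \text{tr}_{\omega_{0}}\text{Ric}(\omega_{0})$ becomes a rational expression in $q$ and its derivatives.

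Next, I would localize near $D$: in coordinates $(z', w)$ with $D = \{w = 0\}$ and a holomorphic trivialization $e_{X}$ of $L_{X}$ satisfying $\sigma_{D} = we_{X}$, set $\phi := -\log h_{X}(e_{X}, e_{X})$, so that $t = -\log|w|^{2} + \phi$ and $\theta_{X} = \dol\phi$. By a holomorphic gauge change I may arrange $\phi = \phi_{0}(z') + \phi_{2}(z')|w|^{2} + O(|w|^{3})$ with orthogonal splitting $\phi_{z_{i}\overline{w}}|_{D} = 0$. Then $q \sim (\phi_{2}|w|^{2})^{-1}$ as $|w| \to 0$, so $1/q = O(|w|^{2})$; since $r^{2} \sim s$ and $s^{-1} \sim |w|^{2a}$, the claim $S(\omega_{0}) = O(r^{-2 - 2n(n-1)/\hat{S}_{D}})$ reduces to showing that the leading $s^{-1}$-coefficient of $S(\omega_{0})$ vanishes on $D$.

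Using $\omega_{0}^{n-1} = s^{n-1}\theta_{X}^{n-1} + (n-1)a s^{n-1}\theta_{X}^{n-2}\wedge \sqrt{-1}\,\partial t\wedge\overline{\partial}t$, I would then compute the three traces at leading $s^{-1}$-order. From the pointwise Kähler-submanifold identity $\text{Ric}(\theta_{X})|_{TD} = \text{Ric}(\theta_{D}) - \dol\log\phi_{2}|_{D}$ (derived from $\det g_{X}|_{D} = \det g_{D}\cdot \phi_{2}$ in orthogonal coordinates), the leading coefficient of $\text{tr}_{\omega_{0}}\text{Ric}(\theta_{X})$ equals $S(\theta_{D}) - \tfrac{1}{2}\Delta_{\theta_{D}}\log\phi_{2}$; that of $\text{tr}_{\omega_{0}}\theta_{X}$ equals $n-1$; and via the asymptotic expansion $\log q = t + (-\log\phi_{2} - \phi_{0}) + O(|w|)$ combined with $\Delta_{\theta_{D}}\phi_{0} = 2(n-1)$, the leading coefficient of $\text{tr}_{\omega_{0}}\dol\log(1+aq)$ equals $(n-1) - \tfrac{1}{2}\Delta_{\theta_{D}}\log\phi_{2} - (n-1) = -\tfrac{1}{2}\Delta_{\theta_{D}}\log\phi_{2}$. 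Using $n(n-1)a = \hat{S}_{D}$, the three pieces assemble as
$$
S(\omega_{0}) = \frac{[S(\theta_{D}) - \tfrac{1}{2}\Delta_{\theta_{D}}\log\phi_{2}] - \hat{S}_{D} - [-\tfrac{1}{2}\Delta_{\theta_{D}}\log\phi_{2}]}{s} + O\!\left(\frac{|w|^{2}}{s}\right) = \frac{S(\theta_{D}) - \hat{S}_{D}}{s} + O(r^{-2-2n(n-1)/\hat{S}_{D}}),
$$
and the cscK hypothesis kills the leading term. The main obstacle is tracking this leading cancellation: the Laplacian term $\tfrac{1}{2}\Delta_{\theta_{D}}\log\phi_{2}$ appears in both $\text{tr}_{\omega_{0}}\text{Ric}(\theta_{X})$ (via the Kähler-submanifold identity) and in $\text{tr}_{\omega_{0}}\dol\log(1+aq)$ (via the fiber-logarithm expansion), contributing to $S(\omega_{0})$ with opposite signs so that they cancel; the constant $a$ is calibrated precisely so that $na(n-1) = \hat{S}_{D}$ matches the remaining constant produced by the $\phi_{0}$ contribution, leaving only $S(\theta_{D}) - \hat{S}_{D}$.
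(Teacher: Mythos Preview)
Your plan is correct and follows essentially the same route as the paper. Both arguments start from the global identity
\[
\mathrm{Ric}(\omega_{0}) = \mathrm{Ric}(\theta_{X}) - na\,\theta_{X} - \dol\log(1+aq), \qquad a = \hat S_{D}/n(n-1),\ q = |\partial t|^{2}_{\theta_{X}},
\]
pass to adapted holomorphic coordinates near $D$, and show that the leading $s^{-1}$-coefficient of $S(\omega_{0})$ equals $S(\theta_{D}) - \hat S_{D}$ (the paper records exactly this in Remark~3.7).

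The only difference is organizational. The paper splits $\mathrm{Ric}(\theta_{X})$ via the block--determinant formula $\det\theta_{X} = \det B\cdot(W - \overline{R}^{t}B^{-1}R)$ and then observes that the two logarithmic pieces combine as
\[
-\dol\log(W - \overline{R}^{t}B^{-1}R) - \dol\log(1+aq) = -\dol\log\bigl(1+O(|w|^{2})\bigr),
\]
so the troublesome term disappears in one stroke. You instead invoke the equivalent geometric form $\det g_{X}|_{D} = \det g_{D}\cdot\phi_{2}$ and track the resulting $\tfrac12\Delta_{\theta_{D}}\log\phi_{2}$ contribution through each of the three traces, verifying the cancellation by hand. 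Your gauge normalization $\phi = \phi_{0}(z') + \phi_{2}(z')|w|^{2} + O(|w|^{3})$ (achieved on all of $D$ by a holomorphic change of frame) is slightly stronger than, but plays the same role as, the paper's pointwise coordinate change killing $\partial_{w}g_{i\bar\jmath}(0;0)$. One small point to make explicit when you fill in the details: to get the $z'$-block of the inverse of $\theta_{X} + a\sqrt{-1}\,\partial t\wedge\overline{\partial}t$ to equal $\theta_{D}^{-1} + O(|w|^{2})$ (rather than a rank-one perturbation of it), you will also want $\partial\phi_{0}(0)=0$, i.e.\ normal coordinates on $D$ at the base point, so that the cross terms $a\phi_{i}\phi_{\bar\jmath}$ are $O(|w|^{2})$ there; this is implicit in the paper's choice $d\varphi(0)=0$.
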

Thus, the cscK condition implies that $S(\omega_{0}) \in C^{k,\alpha}_{\delta}$ for some $\delta > 2$ and  any $k,\alpha$.\

\medskip
In order to construct a complete scalar-flat {\kah} metric on $X \setminus D$, the linearization of the scalar curvature operator plays an important role :
$$
L_{\omega_{0}} = - \mathcal{D}^{*}_{\omega_{0}}\mathcal{D}_{\omega_{0}} + (\nabla^{1,0} \ast , \nabla^{0,1} S(\omega_{0}))_{\omega_{0}}.
$$
Here, $\mathcal{D}_{\omega_{0}} = \overline{\partial} \circ \nabla^{1,0}$.\
We will show that if $4 < \delta <2n $ and there is no nonzero holomorphic vector field on $X$ which vanishes on $D$, then $\mathcal{D}^{*}_{\omega_{0}}\mathcal{D}_{\omega_{0}} : C^{4,\alpha}_{\delta - 4} \to C^{0,\alpha}_{\delta}$ is isomorphic.\
For such operators, we consider the following :

\begin{condition}
\label{condition A}
Assume that $n \geq 3$ and there is no nonzero holomorphic vector field on $X$ which vanishes on $D$.\
For $4 < \delta <2n $, the operator
$$
L_{\omega_{0}} : C^{4,\alpha}_{\delta - 4} \to C^{0,\alpha}_{\delta}
$$
is isomorphic, i.e., we can find a constant $\hat{K}>0$ such that
$$
||L_{\omega_{0}} \phi ||_{C^{0,\alpha}_{\delta}} \geq \hat{K} || \phi ||_{C^{4,\alpha}_{\delta - 4}}
$$
for any $ \phi \in C^{4,\alpha}_{\delta - 4} $.\
\end{condition}

In addition, we consider
\begin{condition}
\label{condition B}
$$
|| S(\omega_{0}) ||_{C^{0,\alpha}_{\delta}} < c_{0}\hat{K}/2.
$$
\end{condition}

Here, the constant $c_{0}$ will be defined in Lemma \ref{contraction lemma} later.\
Under these conditions, Theorem \ref{scalar curvature decay} implies the following result :

\begin{thm}
\label{complete scalar-flat}
Assume that $n \geq 3$ and there is no nonzero holomorphic vector field on $X$ which vanishes on $D$.\
Assume that $\theta_{D}$ is a constant scalar curvature {\kah} metric satisfying
$$
0 < \hat{S}_{D} < n(n-1).
$$
Assume moreover that Condition $\ref{condition A}$ and Condition $\ref{condition B}$ hold,
then $X \setminus D$ admits a complete scalar-flat {\kah} metric.\
\end{thm}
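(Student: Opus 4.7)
The plan is to construct the desired metric as a small perturbation $\omega := \omega_{0} + \dol \phi$ of the model metric, with $\phi \in C^{4,\alpha}_{\delta-4}$ produced by a Banach fixed-point argument. Taylor-expanding the scalar curvature around $\omega_{0}$ yields
$$
S(\omega_{0} + \dol \phi) = S(\omega_{0}) + L_{\omega_{0}}\phi + Q(\phi),
$$
where $Q(\phi)$ gathers the terms nonlinear in $\phi$. The equation $S(\omega_{0} + \dol\phi) = 0$ is then equivalent to the fixed-point equation
$$
\phi \;=\; -\,L_{\omega_{0}}^{-1}\bigl( S(\omega_{0}) + Q(\phi) \bigr) \;=:\; T(\phi),
$$
and Condition~\ref{condition A} guarantees that $L_{\omega_{0}}^{-1} : C^{0,\alpha}_{\delta} \to C^{4,\alpha}_{\delta-4}$ exists with operator norm at most $1/\hat{K}$.

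First I would pin down the weight. Since $\hat{S}_{D} < n(n-1)$ is in the hypothesis, Theorem~\ref{scalar curvature decay} gives $S(\omega_{0}) = O(r^{-\gamma})$ with $\gamma := 2 + 2n(n-1)/\hat{S}_{D} > 4$, so one can pick $\delta \in (4, 2n)$ with $S(\omega_{0}) \in C^{0,\alpha}_{\delta}$; for such $\delta$ both Condition~\ref{condition A} and Condition~\ref{condition B} apply.

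The analytic core is then to verify that $T$ is a contraction on a closed ball $B_{\rho} \subset C^{4,\alpha}_{\delta-4}$ of appropriate radius. Lemma~\ref{contraction lemma} (stated later in the paper) should provide a quadratic-type estimate
$$
\|Q(\phi_{1}) - Q(\phi_{2})\|_{C^{0,\alpha}_{\delta}} \lesssim c_{0}^{-1}\bigl(\|\phi_{1}\|_{C^{4,\alpha}_{\delta-4}} + \|\phi_{2}\|_{C^{4,\alpha}_{\delta-4}}\bigr)\,\|\phi_{1} - \phi_{2}\|_{C^{4,\alpha}_{\delta-4}},
$$
valid whenever $\|\phi_{i}\|_{C^{4,\alpha}_{\delta-4}} \le c_{0}$, with the constant $c_{0}$ calibrated so that the threshold $\|S(\omega_{0})\|_{C^{0,\alpha}_{\delta}} < c_{0}\hat{K}/2$ in Condition~\ref{condition B} is exactly what forces $T$ to send some ball of radius $\rho \sim \|S(\omega_{0})\|_{C^{0,\alpha}_{\delta}}/\hat{K}$ into itself and to contract distances on it by a factor at most $1/2$. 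Banach's fixed-point theorem then yields a unique $\phi \in B_{\rho}$ with $T(\phi) = \phi$, i.e.\ $S(\omega_{0} + \dol\phi) = 0$.

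Finally, since $\delta - 4 > 0$, both $\phi$ and the pointwise complex Hessian $\dol \phi$ decay at infinity, and smallness of $\phi$ in $C^{4,\alpha}_{\delta - 4}$ forces $\omega_{0} + \dol\phi > 0$ pointwise, producing a genuine K\"ahler metric $\omega$; completeness is inherited from $\omega_{0}$ because the perturbation vanishes at infinity. The main difficulty in this proof lies not in the fixed-point step, which is formal once Condition~\ref{condition A} and Lemma~\ref{contraction lemma} are in hand, but in the deep inputs that those hypotheses package: establishing that the fourth-order elliptic operator $\mathcal{D}^{*}_{\omega_{0}}\mathcal{D}_{\omega_{0}}$ (and hence $L_{\omega_{0}}$) is an isomorphism on the weighted H\"older spaces of the asymptotically conical manifold $(X\setminus D, \omega_{0})$ for the full range $4 < \delta < 2n$, and deriving the sharp quadratic estimate for $Q$ in these weighted norms.
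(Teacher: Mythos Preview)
Your proposal is correct and follows essentially the same Arezzo--Pacard fixed-point scheme as the paper: expand $S(\omega_{0}+\dol\phi)$, invert $L_{\omega_{0}}$ using Condition~\ref{condition A}, and contract on a small ball in $C^{4,\alpha}_{\delta-4}$ using Condition~\ref{condition B}. The only packaging difference is that the paper's Lemma~\ref{contraction lemma} is stated as an operator-norm bound $\|L_{\omega_{\phi}} - L_{\omega_{0}}\| \le \hat{K}/2$ for $\|\phi\|_{C^{4,\alpha}_{-2}} \le c_{0}$ (converted to your Lipschitz estimate on $Q$ via $DQ_{\omega_{0},\chi} = L_{\omega_{\chi}} - L_{\omega_{0}}$ and the mean value theorem), and the paper runs the contraction on the fixed ball of radius $c_{0}$ rather than on a ball of radius $\sim \|S(\omega_{0})\|_{C^{0,\alpha}_{\delta}}/\hat{K}$; both choices work.
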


In fact, we can show the existence of a complete scalar-flat {\kah} metric on $X \setminus D$ under the following assumptions : (i) $n \geq 3$ and there is no nonzero holomorphic vector field on $X$ which vanishes on $D$, (ii) there exists a complete {\kah} metric on $X \setminus D$ which is of asymptotically conical geometry, such that its scalar curvature is sufficiently small and decays at a higher order.\
So, if there exists a complete {\kah} metric on $X \setminus D$ which is sufficiently close to $\omega_{0}$ at infinity, satisfying Condition \ref{condition A} and Condition \ref{condition B}, we can show the existence of a complete scalar-flat {\kah} metric on $X \setminus D$.\
Theorem \ref{complete scalar-flat} is proved by the fixed point theorem on the weighted Banach space $C^{4,\alpha}_{\delta - 4}(X \setminus D)$ by following Arezzo-Pacard \cite{AP1}, \cite{AP2} (see also \cite{Sz1}).\
In general, constants $c_{0}, \hat{K}$ which arise in Condition \ref{condition A} and Condition \ref{condition B} depend on the background {\kah} metric $\omega_{0}$.\
In addition, to construct such a {\kah} metric, we have to find a complete {\kah} metric $X \setminus D$ whose scalar curvature is arbitrarily small.\
We will handle this problem in next papers \cite{Aoi2} and \cite{Aoi3}.

This paper is organized as follows.\
In Section 2, recalling the result due to Hwang-Singer \cite{HS}, we give the volume growth of a geodesic ball with respect to the {\kah} metric obtained in \cite{HS}.\
This case is a toy-model of our problem.\
In Section 3, we prove Theorem \ref{scalar curvature decay}.\
To prove this, we use fundamental results in matrix analysis.\
In Section 4, we will introduce the asymptotically conicalness of open Riemannian manifolds and weighted Banach spaces by following Bando-Kobayashi \cite{BK}.\
In Section 5, we study the linearization of the scalar curvature operator between some weighted Banach spaces.\
In section 6, we prove Theorem \ref{complete scalar-flat} by following Arezzo-Pacard \cite{AP1}, \cite{AP2} (see also \cite{Sz1}).\

\begin{ack}
The author would like to thank Professor Ryoichi Kobayashi who first brought the problem in this paper to his attention, for many helpful comments.\
\end{ack}

\section{
The case of line bundles}
\label{sec:2}

Before considering the general case, we consider the existence of a complete scalar-flat {\kah} metrics on line bundles and compute the volume growth.\
Let $(X,L)$ be an $n$-dimensional polarized manifold and $\theta \in 2\pi c_{1}(L)$ be a cscK metric.\
In this case, the value of the scalar curvature is equal to the following average value given by
\begin{equation}
\hat{S}_{X} := \frac{nc_{1}(X) \cup c_{1}(L)^{n-1}}{c_{1}(L)^{n}},
\end{equation}
where $(2\pi)^{n} c_{1}(L)^{n} = \int_{X} \theta^{n}$ and $(2\pi)^{n} c_{1}(X) \cup c_{1}(L)^{n-1} = \int_{X} Ric(\theta) \wedge \theta^{n-1}$.\
Note that $\hat{S}_{X}$ is a topological invariant.\
In 2002, Hwang-Singer \cite{HS} showed the following:
\begin{thm}
\label{scalar-flat line}
Let $(X,L)$ be an $n$-dimensional polarized manifold.\
Suppose that there exists a constant scalar curvature {\kah} metric $\theta \in 2 \pi c_{1}(L)$ and the value of the scalar curvature of $\theta$ is nonnegative:\
\begin{equation}
\hat{S}_{X} \geq 0.
\label{scalar condition}
\end{equation}
Then, there exists a complete scalar-flat {\kah} metric $\omega$ on the total space of the dual line bundle $L^{-1}$.\
\end{thm}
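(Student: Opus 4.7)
The plan is to use the Calabi momentum construction on the total space $\pi : L^{-1} \to X$. Choose a Hermitian metric $h$ on $L$ whose curvature form equals $\theta$, let $s := |\xi|_{h^{-1}}^{2}$ be the squared fiber-norm function, and set $t := \log s$. Since the fiberwise $U(1)$-action preserves $\pi^{*}\theta$ and $t$, I would look for $\omega$ in the $U(1)$-invariant form
\[
\omega = \sqrt{-1}\,\partial\overline{\partial}\,f(t)
\]
for a smooth function $f$, which on the complement of the zero section expands as $\omega = f'(t)\,\pi^{*}\theta + f''(t)\,\sqrt{-1}\,\partial t \wedge \overline{\partial} t$. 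Positivity reduces to $f'(t) > 0$ (base direction) and $f''(t) > 0$ (fiber direction), and smooth extension across $\{s=0\}$ forces prescribed decay of $f$ and its derivatives as $t \to -\infty$.

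Next I would pass to the momentum picture: set $\tau := f'(t)$, which is the moment map for the fiberwise $U(1)$-action, and $\phi(\tau) := f''(t)$, the momentum profile. A standard computation going back to Calabi and systematized by Hwang--Singer expresses $S(\omega)$ as a second order operator in $\phi$ whose coefficients involve $n$, the variable $\tau$, and $S(\theta)$. Under the cscK hypothesis $S(\theta) = \hat{S}_{X}$, the equation $S(\omega) = 0$ reduces to a \emph{linear} second order ODE in $\phi(\tau)$ of Euler type on an interval $\tau \in (0, \tau_{\max})$, which can therefore be integrated in closed form.

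I would then solve this ODE subject to the boundary conditions forced by geometry: $\phi(0) = 0$ together with an explicit normalization of $\phi'(0)$ guaranteeing smoothness at the zero section, and positivity of $\phi$ on the interior (the Kähler condition). A direct integration produces $\phi(\tau)$ as a rational function of $\tau$ whose numerator is a polynomial one of whose coefficients depends affinely on $\hat{S}_{X}$. The hypothesis $\hat{S}_{X} \geq 0$ is precisely what ensures this polynomial remains strictly positive on $(0,\infty)$, so one may take $\tau_{\max} = +\infty$ and obtain a globally defined smooth Kähler form on all of $L^{-1}$.

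The main obstacle is verifying completeness of $\omega$ as $|\xi| \to \infty$. Once $\phi$ is known, the arc-length of a radial ray in a fiber is proportional to $\int^{\infty} \phi(\tau)^{-1/2}\, d\tau$, so completeness is equivalent to non-integrability of $\phi^{-1/2}$ at infinity, which requires $\phi$ to grow at most quadratically in $\tau$. The explicit solution produced above has $\phi(\tau)$ of polynomial growth of degree at most two, delivering the required divergence. These same asymptotics of $\phi$ determine the volume growth of geodesic balls, which is the quantity the author wishes to extract in preparation for the harder analysis around a hypersurface $D \subset X$ carried out later in the paper.
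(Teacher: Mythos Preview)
Your approach matches the paper's: the LeBrun--Simanca/Hwang--Singer momentum construction, reducing $S(\omega)=0$ to a second-order ODE for the profile which one integrates explicitly. One normalization detail is off, however: with your ansatz $\omega=\sqrt{-1}\,\partial\overline\partial f(t)$ and no additive $\pi^{*}\theta$, the restriction of $\omega$ to the zero section is $\tau\cdot\pi^{*}\theta$, so positivity there forces the momentum interval to begin at a \emph{positive} value---the paper takes $\tau\in(1,N)$ with boundary conditions $\phi(1)=0$, $\phi'(1)=1$---rather than at $\tau=0$ as you write. With that correction the ODE is $(\tau^{n}\phi)''=\hat S_{X}\tau^{n-1}$, whose explicit solution is the paper's formula for $\varphi$; convexity of $\tau^{n}\phi$ together with $(\tau^{n}\phi)(1)=0$ and $(\tau^{n}\phi)'(1)=1$ then gives positivity on $(1,\infty)$ when $\hat S_{X}\ge 0$. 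Your completeness check via divergence of $\int^{\infty}\phi^{-1/2}\,d\tau$ is correct and in fact more explicit than the paper's proof section; note that the profile grows \emph{linearly} in $\tau$ when $\hat S_{X}>0$ and decays like $\tau^{1-n}$ when $\hat S_{X}=0$, both of which make that integral diverge.
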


\begin{rem}
In \cite{HS}, they treat more general cases which contain the existence of a complete scalar-flat {\kah} metric on the disc bundle in $L^{-1}$.\
In this article, it is enough for us to consider Theorem \ref{scalar-flat line}.\
\end{rem}
To compute the volume growth of the {\kah} metric $\omega$ above, we need to recall the proof of Theorem \ref{scalar-flat line} by following \cite{HS} (see also \cite{Sz1}).\

\subsection{
LeBrun-Simanca metrics}
\label{sec:2.1}
In this subsection, we construct the LeBrun-Simanca metric for an ample line bundle.\
This metric for the dual of the tautological line bundle over $\mathbb{C}\mathbb{P}^{n-1}$ was found by LeBrun and Simanca \cite{Le},\cite{Si}(see also \cite{Sz1}).\

Fix an $n$-dimensional polarized manifold $(X,L)$.\
Consider a Hermitian metric $h$ on $L$ which defines the {\kah} metric $\theta \in 2\pi c_{1}(L)$.\
Let $p:L^{-1} \to X$ be the projection map and $(L^{-1})^{*}$ be the complement of the zero section of $L^{-1}$.\
Define a smooth function $s$ by
\begin{equation}
s:(L^{-1})^{*} \rightarrow \mathbb{R},
\hspace{10pt}
\xi \mapsto \log h^{-1}(\xi,\xi),
\end{equation}
where $h^{-1}$ is a Hermitian metric on $L^{-1}$ induced by $h$.\

\begin{definition}
{\it The LeBrun-Simanca metric} $\omega$ on $(L^{-1})^{*}$ is defined by
\begin{equation}
\omega := {\dol}f(s),
\end{equation}
where $f$ is a smooth, increasing and strictly convex function on $\mathbb{R}$.
\end{definition}

To extend $\omega$ to the whole space $L^{-1}$,\
we start to compute $\omega$ in local coordinates.\
Fix a point $z_{0}$ in the base space $X$.\
Then we can find a local holomorphic coordinate chart $U$ around $z_{0}$ with a local holomorphic trivialization of $L^{-1}$ around $z_{0}$:
\begin{equation}
\label{trivialization}
(L^{-1}){\mid}_{U} \cong U \times \mathbb{C}, \hspace{5pt} \xi \mapsto (z,w),
\end{equation}
where $w$ is a fiber coodinate.\
Then, in these local coodinates, we can write as
\begin{equation}
s(z,w) = \log |w|^{2} - \log h(z),
\end{equation}
where $h(z)$ is a positive function defined by some local non-vanishing holomorphic section of $L$ on $U$.\
For any point $z_{0} \in X$,\
we can choose the above trivialization so that:
\begin{eqnarray}
d \log h(z_{0})
=0.
\end{eqnarray}
Let us compute at a point $(z_{0},w)$ with $w \neq 0$;
\begin{eqnarray}
{\dol}f(s)
&=&\ddot{f}(s) \sqrt{-1} \frac{dw \wedge d\overline{w}}{|w|^{2}} + \dot{f}(s)p^{*}\theta ,
\end{eqnarray}
where the symbol $\dot{f}$ denotes the differential of $f$ with respect to the variable $s$.\

To simplify the construction of a scalar-flat {\kah} metric on $L^{-1}$, following \cite{Sz1}, we introduce Legendre transforms and momentum profiles and recall fundamental facts of them.\
By using them,\
we can give a condition on the extension of $\omega$ to the whole space $L^{-1}$ and compute the scalar curvature not as a nonlinear PDE in forth order but as an ODE in second order by following \cite{HS}.\

\begin{definition}
Let $f$ be a strictly convex and smooth function on $\mathbb{R}$.\
Set $\tau := \dot{f}(s)$ and $I$ be an image of $\tau$.\
{\it The Legendre transform} $F$ on $I$ of $f$ with variable $\tau$ is defined by
\[
s\tau = f(s) + F(\tau).
\]
\end{definition}
Note that there are following relations:
\[
F^{\prime}(\tau) = s, \hspace{7pt}  F^{\prime\prime}(\tau) = \frac{1}{\ddot{f}(s)},
\]
where we use the symbol $F^{\prime}(\tau)$ as the differential of $F$ with respect to the variable $\tau$.\

\begin{definition}
Let $I\subset \mathbb{R}$ be an image of $\tau$.\
{\it The momentum profile} $\varphi$ of the metric $\omega = \dol f(s)$ is defined by the following:
\begin{equation}
\varphi:I\rightarrow\mathbb{R}, \hspace{10pt} \varphi(\tau)=\frac{1}{F^{\prime\prime}(\tau)} ,
\end{equation}
where $F$ is the Legendre transform of $f$ defined above.
\end{definition}
Clearly, there are following relations:
\begin{equation}
\varphi(\tau)=\ddot{f}(s), \hspace{10pt} \frac{d\tau}{ds}=\varphi(\tau) .
\end{equation}
The following proposition is the converse of the above construction:\

\begin{prop}
\label{main proposition}
Let $I \subset \mathbb{R}$ be any interval and $\varphi$ be a smooth positive function defined on $I$.\
Then we can find a smooth and strictly convex function $f$ on some interval $J$ of $\mathbb{R}$ such that 
\begin{equation}
\tau=\dot{f}(s), \hspace{10pt}
\varphi(\tau)=\ddot{f}(s).
\end{equation}

\end{prop}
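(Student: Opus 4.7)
The plan is to reverse-engineer the construction of the momentum profile: the relations $\tau = \dot f(s)$ and $\varphi(\tau) = \ddot f(s)$ together force the first-order ODE $d\tau/ds = \varphi(\tau)$, which is separable. So the idea is to define $s$ as a function of $\tau$ via an explicit integral, invert it to recover $\tau(s)$, and then integrate once more to recover $f(s)$.

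More concretely, I would first fix a base point $\tau_{0}\in I$ and define
$$
s(\tau) := \int_{\tau_{0}}^{\tau} \frac{du}{\varphi(u)}, \qquad \tau \in I.
$$
Since $\varphi$ is smooth and strictly positive on $I$, the integrand is smooth and positive, so $s$ is a smooth strictly increasing function of $\tau$. Let $J \subset \mathbb{R}$ be the image of $s$, which is an (open) interval, and let $\tau = \tau(s) : J \to I$ denote the smooth inverse given by the inverse function theorem. By construction, $d\tau/ds = \varphi(\tau(s))$ on $J$.

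Next, fix $s_{0} \in J$ and define
$$
f(s) := \int_{s_{0}}^{s} \tau(u)\, du, \qquad s \in J.
$$
Then $\dot f(s) = \tau(s)$ by the fundamental theorem of calculus, and differentiating once more gives $\ddot f(s) = d\tau/ds = \varphi(\tau(s)) > 0$. Hence $f$ is smooth and strictly convex on $J$ and satisfies the required relations $\tau = \dot f(s)$ and $\varphi(\tau) = \ddot f(s)$.

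I do not expect any serious obstacle here: the argument is essentially a separation-of-variables computation coupled with the inverse function theorem. The only thing to be slightly careful about is the domain of definition, namely to observe that $J$ is genuinely an interval (which is automatic since $s(\tau)$ is a smooth monotone map of an interval) and that $f$ is determined only up to an affine function of $s$, corresponding to the two integration constants (the choice of $\tau_{0}$ and $s_{0}$); this ambiguity is harmless because it only shifts $f$ and $s$ by constants and does not affect the {\kah} form $\dol f(s)$.
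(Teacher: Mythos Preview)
Your proof is correct and is essentially the same as the paper's: the paper defines $G$ with $G'(\tau)=1/\varphi(\tau)$ (your $s(\tau)$), sets $J=G(I)$, and puts $f(s)=\int_c^s G^{-1}(t)\,dt$, which is exactly your construction with slightly different notation. Your write-up is in fact a bit more explicit about the monotonicity, the inverse function theorem, and the verification that $\ddot f=\varphi(\tau)>0$.
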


\begin{proof}
Let $G = G(\tau)$ be a function on $I$ with $G^{\prime}(\tau) = 1/\varphi(\tau)$.\
Since $G$ is strictly monotone increasing, we have $\tau = G^{-1}(s)$.\
Set $J := G(I)$.\
Proposition \ref{main proposition} is proved by setting 
\[
f(s) := \int^{s}_{c} G^{-1}(t)dt
\]
for some $c \in J$.
\end{proof}

\subsection{
The extension to the total space}

In this subsection, we give a condition such that the LeBrun-Simanca metric $\omega$ can be extended to the whole space $L^{-1}$ as a {\kah} metric by following \cite{HS} (see also \cite{Sz1}).\
By using the momentum profile $\varphi$, $\omega$ can be rewritten as follows:\
\begin{equation}
\label{eta formula}
\omega = \varphi(\tau) \sqrt{-1} \frac{dw \wedge d\overline{w}}{|w|^{2}} + \tau p^{*}\theta .
\end{equation}

First, we set a momentum profile $\varphi$ defined on $I := (1,N)$ for some $N \in (1,\infty]$ so that a function $f$ is defined on $J = \mathbb{R}$ in the way in the proof of Proposition \ref{main proposition}.\
Then, the formula (\ref{eta formula}) implies that the LeBrun-Simanca metric $\omega$ is positive in the base direction at any point in the zero section.\
Following \cite{HS}, to obtain the positivity in the fiber direction and the smoothness of $\omega$ on the whole space $L^{-1}$,\
we pose the boundary condition on $\varphi$:\

\begin{prop}
\label{kahlerity}

Suppose that $\varphi$ satisfies the following boundary condition:
\begin{equation}
\varphi(1)=0, \hspace{10pt} \varphi^{\prime}(1)=1,
\end{equation}
and can be extended smoothly in a neighborhood of $1$.\
Then $\omega$ can be extended to $L^{-1}$ as a {\kah} metric.\
\end{prop}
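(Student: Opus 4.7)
The plan is to work in the local trivialization from (\ref{trivialization}) chosen around $z_{0}$ with $d\log h(z_{0})=0$, where
\[
\omega \;=\; \varphi(\tau)\,\sqrt{-1}\,\frac{dw\wedge d\overline{w}}{|w|^{2}} \;+\; \tau\, p^{*}\theta,
\]
and to pass to the smooth coordinate $u := e^{s} = |w|^{2}/h(z)$ transverse to the zero section $\{w=0\}$. Smooth extension of $\omega$ across the zero section then reduces to showing that $\tau$ and $\varphi(\tau)/|w|^{2}$ extend smoothly across $u=0$, while positivity reduces to verifying $\tau\to 1$ and $\varphi(\tau)/|w|^{2}\to h(z_{0})^{-1}>0$ on the zero section.

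The analytic core is the integration of the defining relation $d\tau/ds=\varphi(\tau)$ near the degenerate fixed point $\tau=1$. First I would use the boundary conditions to produce the Taylor expansion $\varphi(\tau)=(\tau-1)+a_{2}(\tau-1)^{2}+\cdots$, so that $1/\varphi(\tau)=1/(\tau-1)+R(\tau)$ with $R$ smooth across $\tau=1$. Integrating gives an identity $\log(\tau-1)+\widetilde R(\tau)=s+C$ for some $\widetilde R$ smooth at $\tau=1$; exponentiating and applying the implicit function theorem in the variable $v:=\tau-1$ at $v=u=0$ yields a smooth function $v=v(u)$ with $v(u)=u+O(u^{2})$. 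Hence $\tau=1+v(u)$ is smooth in $u$ across the zero section.

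Substituting back, $\varphi(\tau)=v(u)\bigl(1+O(v(u))\bigr)=u\,\tilde\psi(u)$ with $\tilde\psi$ smooth and $\tilde\psi(0)=1$, so
\[
\frac{\varphi(\tau)}{|w|^{2}} \;=\; \frac{\tilde\psi(u)}{h(z)}
\]
is smooth across $\{w=0\}$ and strictly positive there. In the chosen gauge, at $(z_{0},0)$ the extended $\omega$ equals $h(z_{0})^{-1}\sqrt{-1}\,dw\wedge d\overline{w}+p^{*}\theta$, which is positive definite as a sum of two positive $(1,1)$-blocks. Since smoothness and positivity are local properties and the point $z_{0}\in X$ was arbitrary, I would conclude that $\omega$ extends to a \kah\ metric on $L^{-1}$.

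The main obstacle is the smoothness of $\tau$ at the ODE singularity $\tau=1$: because $\varphi(1)=0$, one cannot invoke standard smooth dependence on initial data directly. The assumption $\varphi'(1)=1$ is exactly the non-resonance condition (a hyperbolic sink with eigenvalue one) that makes the implicit function argument above deliver smoothness in the correct variable $u=e^{s}$; this is the one-dimensional instance of Poincar\'e's linearization theorem and is what singles out these particular boundary values as the right ones for the extension.
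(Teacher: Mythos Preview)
The paper does not prove this proposition; it is simply stated, following \cite{HS} and \cite{Sz1}. Your argument is essentially the standard one and is correct.

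Two small comments. The displayed formula for $\omega$ you start from is only pointwise at $(z_{0},w)$; in a neighborhood there are cross terms of the form $\varphi(\tau)\,w^{-1}\,dw\wedge\overline{\partial}\log h$ and their conjugates, so your reduction of smoothness of $\omega$ to smoothness of $\tau$ and $\varphi(\tau)/|w|^{2}$ needs the extra remark that $\varphi(\tau)/w=(\varphi(\tau)/|w|^{2})\,\overline{w}$ is then smooth as well. A cleaner packaging is to note that your ODE analysis actually yields $f(s)=s+F(e^{s})$ with $F$ smooth near $0$ (since $\dot f-1=\tau-1=v(u)$ and $v(u)/u$ is smooth), so that on $(L^{-1})^{*}$ one has $\omega=p^{*}\theta+\sqrt{-1}\partial\overline{\partial}\,F(|w|^{2}/h(z))$, and the right-hand side is manifestly smooth on all of $L^{-1}$. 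Second, from $u=v\,e^{\tilde R(1+v)}$ the inverse function theorem gives $v'(0)=e^{-\tilde R(1)}$, which need not equal $1$; this depends on the integration constant in $G$ and affects only the constant in your formula for $\omega$ at $(z_{0},0)$, not the positivity conclusion.
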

For simplicity, we will denote the extended metric on $L^{-1}$ by the same symbol $\omega$.\

\subsection{
The Ricci form and the scalar curvature}

In this subsection, we compute the Ricci form and the scalar curvature of the LeBrun-Simanca metric $\omega$ (see \cite{HS}, \cite{Sz1}).\

\begin{prop}
\label{ricci and scalar}
The Ricci form $Ric(\omega)$ and the scalar curvature $S(\omega)$ of $\omega$ are given by
\begin{equation*}
Ric(\omega) = -\varphi \Bigl( \varphi^{\prime} + n\frac{\varphi}{\tau} \Bigr)^{\prime} \frac{\sqrt{-1}dw \wedge d\overline{w}}{|w|^{2}} - \Bigl( \varphi^{\prime} + n\frac{\varphi}{\tau} \Bigr)p^{*}\theta + p^{*}Ric(\theta),
\end{equation*}
\begin{equation*}
\label{scalar}
S(\omega) = \frac{p^{*}S(\theta)}{\tau} - \frac{1}{\tau^{n}}\frac{d^{2}}{d\tau^{2}}(\tau^{n}\varphi(\tau)),
\end{equation*}
where $Ric(\omega)$ is a pointwise formula.
\end{prop}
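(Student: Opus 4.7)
The plan is to compute the Ricci form via $Ric(\omega) = -\sqrt{-1}\partial\overline{\partial}\log\det(g_{\omega})$ in a local holomorphic trivialization of $L^{-1}$ and then trace with $\omega$ to obtain the scalar curvature; since every intermediate identity is tensorial, it suffices to verify the pointwise formulas at an arbitrary point of $(L^{-1})^{*}$ using a holomorphic frame chosen so that $d\log h(z_{0})=0$. Starting from $\omega = \sqrt{-1}\varphi(\tau)\,\partial s\wedge \overline{\partial} s + \tau\, p^{*}\theta$ and using $(p^{*}\theta)^{n+1}=0$ and $(\partial s\wedge\overline{\partial}s)^{2}=0$, only one binomial term survives:
\[
\omega^{n+1} = (n+1)\,\varphi(\tau)\tau^{n}\,\sqrt{-1}\partial s\wedge\overline{\partial}s \wedge (p^{*}\theta)^{n}.
\]
In the trivialization (\ref{trivialization}) the cross pieces of $\partial s$ involving $\partial\log h(z)$ drop out of the wedge against the base-top form $(p^{*}\theta)^{n}$, and reading off the density against $dz^{1}\wedge\cdots\wedge dz^{n}\wedge dw$ gives $\det(g_{\omega}) = \varphi(\tau)\tau^{n}\det(\theta_{i\overline{j}})/|w|^{2}$.

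Applying $-\sqrt{-1}\partial\overline{\partial}\log$ to this density, the $-\log|w|^{2}$ term is annihilated off the zero section and $-\sqrt{-1}\partial\overline{\partial}\log\det(\theta_{i\overline{j}})$ contributes $p^{*}Ric(\theta)$. For the remaining piece $\log\varphi(\tau)+n\log\tau$ I would apply the chain rule $\sqrt{-1}\partial\overline{\partial} F(\tau) = F''(\tau)\sqrt{-1}\partial\tau\wedge\overline{\partial}\tau + F'(\tau)\sqrt{-1}\partial\overline{\partial}\tau$ together with the basic identities
\[
\sqrt{-1}\partial\overline{\partial}\tau = \sqrt{-1}\varphi\varphi'\,\partial s\wedge\overline{\partial} s + \varphi\, p^{*}\theta, \qquad \sqrt{-1}\partial\tau\wedge\overline{\partial}\tau = \varphi^{2}\sqrt{-1}\partial s\wedge\overline{\partial} s,
\]
which follow from $\sqrt{-1}\partial\overline{\partial} s = p^{*}\theta$ and $\partial\tau=\varphi\,\partial s$. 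A direct simplification shows that the coefficient of $p^{*}\theta$ collapses to $\varphi' + n\varphi/\tau$ and the coefficient of $\sqrt{-1}\partial s\wedge\overline{\partial}s$ to $\varphi(\varphi'+n\varphi/\tau)'$; evaluating at a point with $d\log h(z_{0})=0$, where $\partial s = dw/w$, recovers the stated pointwise expression for $Ric(\omega)$.

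For the scalar curvature I would exploit the same distinguished point, at which $\omega$ and $Ric(\omega)$ are block-diagonal between fiber and base. The fiber trace contributes $-(\varphi'+n\varphi/\tau)'$ (since $g^{w\overline{w}}=|w|^{2}/\varphi$ cancels the leading $\varphi$), while the base trace contributes $\tau^{-1}[p^{*}S(\theta) - n(\varphi'+n\varphi/\tau)]$ by $\operatorname{tr}_{\tau\theta}=\tau^{-1}\operatorname{tr}_{\theta}$. Summing and invoking the algebraic identity
\[
\frac{1}{\tau^{n}}\frac{d^{2}}{d\tau^{2}}(\tau^{n}\varphi) = \Bigl(\varphi'+\frac{n\varphi}{\tau}\Bigr)' + \frac{n}{\tau}\Bigl(\varphi'+\frac{n\varphi}{\tau}\Bigr),
\]
which is immediate from $(\tau^{n}\varphi)' = \tau^{n}(\varphi'+n\varphi/\tau)$, produces the compact form of $S(\omega)$.

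The main obstacle is bookkeeping rather than conceptual: several chain-rule expressions mix base and fiber directions, and one must carefully track which cross terms of $\partial s$ survive each wedge or trace. The choice of a normal trivialization with $d\log h(z_{0})=0$ forces $\partial s = dw/w$ at the point and reduces $\sqrt{-1}\partial\overline{\partial}s$ to $p^{*}\theta$, decoupling the computation into a clean fiber/base split; since every identity used is tensorial, the pointwise verification at a single point then establishes the global formulas.
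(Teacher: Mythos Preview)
Your proof is correct and follows essentially the same route as the paper: compute the volume form $\omega^{n+1}$, apply $-\sqrt{-1}\partial\overline{\partial}\log$ to obtain $Ric(\omega)$, and then trace against $\omega$ using the fiber/base block-diagonal structure at a point with $d\log h(z_{0})=0$. The only cosmetic difference is that you organize the chain-rule step more invariantly via $\partial\tau$, $\overline{\partial}\tau$, and $\sqrt{-1}\partial\overline{\partial}\tau$ before specializing $\partial s$ to $dw/w$, whereas the paper works directly in the distinguished trivialization from the start; the algebra and the final identification $\tau^{-n}(\tau^{n}\varphi)'' = (\varphi'+n\varphi/\tau)' + (n/\tau)(\varphi'+n\varphi/\tau)$ are identical.
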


\begin{proof}

First, the Ricci form $Ric(\omega)$ is locally given by the following:
\begin{equation*}
Ric (\omega) = -{\dol}\log {\omega}^{n+1}.
\end{equation*}
By the direct computation  at a point $z_{0}$, we have
\begin{eqnarray*}
{\omega}^{n+1} &=& \frac{\ddot{f}(s) {\dot{f}(s)}^{n}p^{*}\theta^{n} \wedge \sqrt{-1}dw \wedge d\overline{w}}{|w|^{2}}\\
&=& \frac{\varphi(\tau) {\tau}^{n}p^{*}\theta^{n} \wedge \sqrt{-1}dw \wedge d\overline{w}}{|w|^{2}}.
\end{eqnarray*}
If we choose another trivialization $(z,\hat{w})$ of $L^{-1}$,\
there exists a holomorphic transform function $g$ such that $\hat{w} = g(z)w$.\
The differential of $g(z)$ does not affect the above formula because $p^{*}\theta^{n}$ is the top wedge product in the base direction.\
Therefore, ${\omega}^{n+1}$ is invariant under the choice of the local coodinates $(z,w)$ and we can use the formula above globally.\

Let us compute $Ric(\omega)$ at a point $(z_{0},w_{0})$:
\begin{eqnarray*}
Ric(\omega)&=&-{\dol}\log\varphi(\tau(s)) - n{\dol}\log \tau(s) + p^{*}Ric(\theta)\nonumber\\
&=&-\varphi \Bigl( \varphi^{\prime} + n\frac{\varphi}{\tau} \Bigr)^{\prime} \frac{\sqrt{-1}dw \wedge d\overline{w}}{|w|^{2}} - \Bigl( \varphi^{\prime} + n\frac{\varphi}{\tau} \Bigr)p^{*}\theta + p^{*}Ric(\theta).
\end{eqnarray*}
Note that the equation of $Ric(\omega)$ is completely divided into the base direction and fiber direction.\
Taking a trace of the Ricci form by the metric $\omega = \varphi(\tau)\sqrt{-1} dw \wedge d\overline{w}/|w|^{2} + \tau p^{*}\theta$,\
we have the following:\
\begin{eqnarray*}
S(\omega) &=& -\Bigl( \varphi^{\prime} + n\frac{\varphi}{\tau} \Bigr)^{\prime} - \frac{n}{\tau}\Bigl( \varphi^{\prime} + n\frac{\varphi}{\tau} \Bigr) + \frac{p^{*}S(\theta)}{\tau}\\
&=& - \frac{1}{\tau^{n}}\frac{d^{2}}{d\tau^{2}}(\tau^{n}\varphi(\tau)) + \frac{p^{*}S(\theta)}{\tau}.
\end{eqnarray*}
Thus, the proof of Proposition \ref{ricci and scalar} is finished.
\end{proof}

\subsection{
ODE}

In this subsection, we prove Theorem \ref{scalar-flat line} by using Proposition \ref{ricci and scalar}.\
The key of the proof is that we can consider the scalar-flat condition as the case of ordinary differential equations (ODE) in second order on the assumption that $\omega$ is cscK.\\

\noindent
{\it  Proof of Theorem \ref{scalar-flat line}.}\
If the {\kah} metric $\theta$ has a constant scalar curvature,\
the value of the scalar curvature $S(\theta)$ is equal to the average of the scalar curvature:
\begin{equation}
\hat{S}_{X}=\frac{nc_{1}(X) \cup c_{1}(L)^{n-1}}{c_{1}(L)^{n}}.
\end{equation}
By the formula in Proposition \ref{ricci and scalar}, to make $\omega$ scalar-flat,\
it is enough to solve the following ODE with the boundary condition:
\begin{equation*}
\frac{d^{2}}{d\tau^{2}}(\tau^{n}\varphi(\tau)) = \hat{S}_{X}\tau^{n-1}, \hspace{10pt} \varphi(1)=0, \hspace{10pt} \varphi^{\prime}(1)=1.
\end{equation*} 
In fact, a solution of this is easily given by
\begin{equation}
\label{solution 2}
\varphi(\tau) = \frac{\hat{S}_{X}}{n(n+1)}\tau - \Bigl( \frac{\hat{S}_{X}}{n} - 1 \Bigr)\tau^{1-n} + \Bigl( \frac{\hat{S}_{X}}{n + 1} - 1 \Bigr)\tau^{-n}.
\end{equation}
If $ N = \infty $, $\varphi = O(\tau)$ as $\tau \to \infty$.\
If $ N < \infty $, $\varphi$ vanishes like a polynomial.\
Recall that $s = \int \varphi^{-1} d\tau$.\
In both cases, applying Proposition \ref{main proposition} for these $\varphi$, we can obtain an increasing, strictly convex and smooth function $f$ defined on $\mathbb{R}$ by setting an interval $I := \{ \tau \in \mathbb{R}$ $|$ $ \varphi(\tau)$ is positive $\}$.\
Then, we have finished the proof of Theorem \ref{scalar-flat line}.\sq

\subsection{
Volume growth}

In this subsection, we compute the volume growth of the {\kah} metric $\omega$.\

\begin{prop}
\label{volume growth}
Fix a point $\xi \in L^{-1}$.\
Suppose that $\omega$ be the LeBrun-Simanca metric as above.\
Let $B(\xi,r)$ be a geodesic ball with respect to $\omega$ in $L^{-1}$ of radius $r$ centered at $\xi$.\

$(1)$ If $\hat{S}_{X} >0$, we have
\begin{equation}
\int_{B(\xi,r)} \omega^{n+1} = O (r^{2(n+1)}) \hspace{7pt} \mbox{as} \hspace{7pt} r \to \infty.
\end{equation}

$(2)$ If $\hat{S}_{X} = 0$, we have
\begin{equation}
\int_{B(\xi,r)} \omega^{n+1} = O (r^{2}) \hspace{7pt} \mbox{as} \hspace{7pt} r \to \infty.
\end{equation}
\end{prop}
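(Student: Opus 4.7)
The plan is to reduce both the volume and the geodesic distance to one-dimensional integrals in the momentum variable $\tau$, and then read off the growth exponents from the explicit formula \eqref{solution 2}. By the computation in the proof of Proposition \ref{ricci and scalar}, one has globally
$$
\omega^{n+1} \;=\; (n+1)!\,\varphi(\tau)\,\tau^{n}\,p^{*}\theta^{n}\wedge\frac{\sqrt{-1}\,dw\wedge d\overline{w}}{|w|^{2}}.
$$
Writing $w = e^{u+iv}$ so that $\sqrt{-1}\,dw\wedge d\overline{w}/|w|^{2} = 2\,du\wedge dv$, changing variables via $ds/d\tau = 1/\varphi(\tau)$, and integrating out the base $X$ and the angular coordinate $v$, the volume of a $\tau$-sublevel set collapses to
$$
\mathrm{Vol}_{\omega}\bigl(\{1\leq \tau\leq T\}\bigr) \;=\; C\int_{1}^{T}\tau^{n}\,d\tau \;\asymp\; T^{\,n+1}
$$
for some $C>0$ depending only on $n$ and $\int_{X}\theta^{n}$; in particular the factor $\varphi(\tau)$ in $\omega^{n+1}$ cancels against the Jacobian $ds/d\tau$.

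Next I would relate $\tau$ to the geodesic distance $r$ from the zero section. The Riemannian metric underlying $\omega$ has fiber part $2\varphi(\tau)(du^{2}+dv^{2})$, so a radial fiber ray from $\tau=1$ out to level $\tau$ has length
$$
r(\tau) \;=\; \int_{1}^{\tau}\frac{d\tau'}{\sqrt{2\,\varphi(\tau')}}.
$$
Plugging in \eqref{solution 2}: when $\hat{S}_{X}>0$ one has $\varphi(\tau)\sim \tfrac{\hat{S}_{X}}{n(n+1)}\tau$ as $\tau\to\infty$, hence $r\asymp \tau^{1/2}$ and $\tau\asymp r^{2}$; when $\hat{S}_{X}=0$ the formula collapses to $\varphi(\tau)=\tau^{1-n}-\tau^{-n}\sim \tau^{1-n}$, hence $r\asymp \tau^{(n+1)/2}$ and $\tau\asymp r^{2/(n+1)}$. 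Substituting $T=\tau(r)$ into the volume computation gives $\mathrm{Vol}_{\omega}\{\tau\leq \tau(r)\}\asymp r^{2(n+1)}$ in case $(1)$ and $\asymp r^{2}$ in case $(2)$, which are exactly the stated powers.

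The last step is a routine comparison between the geodesic ball $B(\xi,r)$ and the $\tau$-sublevel set $\{\tau\leq \tau(r)\}$. Because $X$ is compact with finite $\theta$-diameter $d_{X}$, joining two points by descending along fibers to $\{\tau=1\}$, crossing the base, and then ascending along fibers yields $B(\xi,r)\supseteq\{\tau\leq \tau(r-d_{X})\}$ for large $r$, while the bound $|d\tau/dr|\leq \sqrt{2\varphi(\tau)}$ along any geodesic gives the matching inclusion $B(\xi,r)\subseteq\{\tau\leq \tau(r+d_{X})\}$. Both inclusions produce the same leading power of $r$, so the two asymptotics $O(r^{2(n+1)})$ and $O(r^{2})$ follow. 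I do not expect any genuine obstacle; the only point to be attentive to is identifying which term of \eqref{solution 2} dominates for each value of $\hat{S}_{X}$, since that is what controls the final exponent.
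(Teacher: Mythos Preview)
Your argument is correct and follows the same broad outline as the paper—reduce everything to the momentum variable $\tau$, read off the asymptotics of $\varphi$ from \eqref{solution 2}, and use compactness of the base to compare geodesic balls with $\tau$-sublevel sets—but the execution differs. The paper first establishes an auxiliary result (Lemma \ref{app lemma}) computing the fiberwise distance $d(0_{y},\xi_{y})$ by explicitly parametrizing the radial geodesic, and then evaluates the volume of $B(0_{x},r)$ via Stokes' theorem, converting $\int_{B}\omega^{n+1}$ into the boundary integral $\int_{\partial B}\tau^{n+1}\sqrt{-1}\,dw/w\wedge p^{*}\theta^{n}$, which is then handled by the residue theorem. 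You instead integrate the volume form directly over $\tau$-sublevel sets, noting that the factor $\varphi(\tau)$ in $\omega^{n+1}$ cancels against the Jacobian $ds/d\tau=1/\varphi$ and leaving $\int\tau^{n}\,d\tau$; and you obtain the $\tau$--distance relation from the gradient bound $|d\tau/d(\text{arclength})|\leq|\nabla\tau|=\sqrt{2\varphi(\tau)}$ rather than from an explicit geodesic parametrization. Your route avoids both Stokes and the residue calculation and makes the key cancellation transparent; the paper's route works directly on the geodesic ball without the sandwiching step. Two minor corrections that do not affect the conclusion: the combinatorial factor in $\omega^{n+1}$ is $(n+1)$ rather than $(n+1)!$, and the constant in your inclusions should be $d_{X}+r(\tau(\xi))$ (the fixed center $\xi$ need not lie on the zero section) rather than $d_{X}$ alone.
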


For a point $y \in X$, a symbol $\xi_{y}$ denotes an element of the fiber $L^{-1}_{y}$.\
In particular, a symbol $0_{y}$ denotes the zero element of the fiber $L^{-1}_{y}$.\
In this section, we use the LeBrun-Simanca metric $\omega$ given by the solution (\ref{solution 2}) in the both cases of $\hat{S}_{X}>0$ and $\hat{S}_{X}=0$.\
First, we compute a relation between the geodesic distance for the metric $\omega$ and the Hermitian norm for $h^{-1}$.\
For simplicity, $|\xi_{y}|$ denotes a square root of the hermitian norm $h^{-1}(\xi_{y},\xi_{y})$.\

\begin{lemma}\
\label{app lemma}

$(a)$ If $\hat{S}_{X}>0$, we have
\begin{equation}
d(0_{y},\xi_{y}) = O(|\xi_{y}|^{\frac{\hat{S}_{X}}{n(n+1)}}) \hspace{7pt} {\mbox as} \hspace{7pt} |\xi_{y}| \to \infty
\end{equation}
and
\begin{equation}
\tau = O(d(0_{y},\xi_{y})^{2}) \hspace{7pt} {\mbox as} \hspace{7pt} |\xi_{y}| \to \infty.
\label{tau order1}
\end{equation}

$(b)$ If $\hat{S}_{X}=0$, we have
\begin{equation}
d(0_{y},\xi_{y}) = O((\log |\xi_{y}|)^{\frac{n+1}{2n}}) \hspace{7pt} {\mbox as} \hspace{7pt} |\xi_{y}| \to \infty
\end{equation}
and
\begin{equation}
\tau = O( d(0_{y},\xi_{y})^{\frac{2}{n+1}}) \hspace{7pt} {\mbox as} \hspace{7pt} |\xi_{y}| \to \infty.
\label{tau order2}
\end{equation}

\end{lemma}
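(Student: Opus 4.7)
The plan is to reduce everything to a one-variable calculation along a fiber of $p: L^{-1} \to X$, using the formula $\omega = \varphi(\tau)\sqrt{-1}dw\wedge d\bar w/|w|^{2} + \tau p^{*}\theta$ from \eqref{eta formula} and the explicit solution \eqref{solution 2}. Fix $y \in X$ and work in the trivialization \eqref{trivialization} with $d\log h(y) = 0$; then on the fiber $L^{-1}_{y}$ the function $s$ is literally $\log|w|^{2}$, so that $|\xi_{y}|^{2} = e^{s}$. I would first compute the pointwise norm of $d\tau$ with respect to $\omega$. Since $\partial s = dw/w$ at the chosen point and $g_{w\bar w} = \varphi(\tau)/|w|^{2}$, one gets $|ds|_{\omega}^{2} = 2/\varphi(\tau)$, and from $d\tau = \varphi(\tau)\,ds$ one finds
\[
|d\tau|_{\omega} = \sqrt{2\varphi(\tau)}.
\]

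Next I would establish the key identity
\[
d(0_{y},\xi_{y}) \;=\; \int_{1}^{\tau(\xi_{y})} \frac{d\tau}{\sqrt{2\varphi(\tau)}},
\]
obtained as matching upper and lower bounds. The upper bound comes from parametrizing the radial curve $r \mapsto r\xi_{y}/|\xi_{y}|$ inside the fiber and computing its length using the $w$-part of $\omega$ (which is $2\varphi(\tau)/|w|^{2}\cdot |dw|^{2}$ as a Riemannian tensor). The lower bound comes from the gradient estimate: for any path $\gamma$ from $0_{y}$ to $\xi_{y}$, $|(\tau\circ\gamma)'(t)| \le |d\tau|_{\omega} = \sqrt{2\varphi}$, and a change of variables gives $\mathrm{length}(\gamma) \ge \int_{1}^{\tau(\xi_{y})} d\tau/\sqrt{2\varphi}$. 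In parallel, the relation $d\tau/ds = \varphi(\tau)$ from \S 2.1 gives
\[
s = \int_{1}^{\tau} \frac{du}{\varphi(u)} + \text{const}, \qquad |\xi_{y}|^{2} = e^{s}.
\]

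Finally I would carry out the asymptotic analysis of these two integrals using \eqref{solution 2}. In case $(a)$, $\hat{S}_{X} > 0$, the leading term is $\varphi(\tau) \sim \hat{S}_{X}\tau/(n(n+1))$ as $\tau\to\infty$, so that
\[
\int_{1}^{\tau_{0}} \frac{du}{\varphi(u)} \sim \tfrac{n(n+1)}{\hat{S}_{X}}\log\tau_{0}, \qquad \int_{1}^{\tau_{0}}\frac{du}{\sqrt{2\varphi(u)}} \sim c\sqrt{\tau_{0}}.
\]
Inverting the first relation gives $\tau_{0} \asymp |\xi_{y}|^{2\hat{S}_{X}/(n(n+1))}$, hence $d(0_{y},\xi_{y}) \asymp |\xi_{y}|^{\hat{S}_{X}/(n(n+1))}$ and $\tau_{0} \asymp d(0_{y},\xi_{y})^{2}$. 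In case $(b)$, $\hat{S}_{X} = 0$, the formula collapses to $\varphi(\tau) = \tau^{1-n}-\tau^{-n}\sim \tau^{1-n}$, whence
\[
\int_{1}^{\tau_{0}} \frac{du}{\varphi(u)} \sim \frac{\tau_{0}^{n}}{n}, \qquad \int_{1}^{\tau_{0}}\frac{du}{\sqrt{2\varphi(u)}} \sim c'\tau_{0}^{(n+1)/2},
\]
which yield $\tau_{0} \asymp (\log|\xi_{y}|)^{1/n}$ and hence $d(0_{y},\xi_{y}) \asymp (\log|\xi_{y}|)^{(n+1)/(2n)}$ and $\tau_{0} \asymp d(0_{y},\xi_{y})^{2/(n+1)}$.

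The only step with any substance is the distance formula: one needs the lower bound, not just the obvious upper bound coming from the radial fiber curve, and for this the computation $|d\tau|_{\omega}^{2} = 2\varphi$ is essential. Everything else is bookkeeping with the explicit expression \eqref{solution 2} and elementary asymptotic integration, so I do not anticipate any serious obstacle beyond keeping track of the two regimes separately.
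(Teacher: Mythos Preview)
Your proposal is correct and follows the same overall strategy as the paper---reduce to a one-variable computation along a fiber using the explicit momentum profile \eqref{solution 2}---but your treatment of the distance $d(0_{y},\xi_{y})$ is cleaner. The paper asserts that the length-minimizing geodesic between $0_{y}$ and $\xi_{y}$ ``clearly'' stays in the fiber $L^{-1}_{y}$, invokes $S^{1}$-invariance to write it as a radial curve $w_{t}=\theta_{t}w_{1}$, and then evaluates the length integral directly. You instead prove the exact formula $d(0_{y},\xi_{y})=\int_{1}^{\tau(\xi_{y})}d\tau/\sqrt{2\varphi(\tau)}$ by matching bounds: the radial curve gives the upper bound, and the gradient estimate $|d\tau|_{\omega}=\sqrt{2\varphi}$ gives the lower bound for \emph{any} path, bypassing the need to locate the geodesic at all. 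Both approaches yield the same asymptotics, but yours is more robust (it does not rely on the fiber being totally geodesic or on the radial curve being the actual minimizer) and makes the two-sided $O(\cdot)$ estimates transparent. The remaining asymptotic integration is identical in substance to the paper's computations.
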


\begin{proof}

First, we prove the statement $(a)$.\
By the completeness of $\omega$, there exists a length minimizing geodesic connecting any pair of two points in $L^{-1}$.\
Fix the length minimizing geodesic $\gamma(t), t \in [0,1]$ from $0_{y}$ to $\xi_{y}$.\
Clearly, for fixed $y \in X$, the image of the geodesic $\gamma(t)$ is in the fiber $L^{-1}_{y}$.\
For simplicity, we assume that $h^{-1}(\xi_{y},\xi_{y}) = |w|^{2}$ in the trivialization (\ref{trivialization}).\
Set $v=|w|$.\

$(1)$ Recall that the positive function $\varphi(\tau)$ in the case $(a)$ is written as
\[
\varphi(\tau) = \frac{\hat{S}_{X}}{n(n+1)}\tau - \Bigl( \frac{\hat{S}_{X}}{n} - 1 \Bigr)\tau^{1-n} + \Bigl( \frac{\hat{S}_{X}}{n + 1} - 1 \Bigr)\tau^{-n}.
\]
Since $\tau \to \infty$ as $v \to \infty$,\
the second and third terms above are very small as $v \to \infty$.\
Then,
\begin{eqnarray}
\frac{d\tau}{dv}
&=& \varphi(\tau) \frac{2}{v} \nonumber\\
&=& \frac{2\hat{S}}{n(n+1)}\frac{\tau}{v} + \mbox{l.o.t}.
\end{eqnarray}
Here the symbol $\mbox{l.o.t}.$ denotes lower order terms as $v \to \infty$.\
We have
\begin{eqnarray*}
\frac{1}{\tau}\frac{d\tau}{dv} = \frac{2\hat{S}}{n(n+1)}\frac{1}{v} + \mbox{l.o.t}.
\end{eqnarray*}
Thus, we have
\begin{eqnarray}
\tau 
&=& O(|\xi_{y}|^{\frac{2\hat{S}_{X}}{n(n+1)}}),
\label{case 1.1}
\end{eqnarray}
as $|\xi_{y}| \to \infty$.\

Denote $\gamma(t) = (y,w_{t})$.\
Since the LeBrun-Simanca metric $\omega$ is $S^{1}$-invariant,\
we can write
\[
w_{t} = \theta_{t} w_{1},
\]
where $\theta_{t}$ is some real nonnegative function such that $\theta_{0}=0$ and $\theta_{1}=1$.\
Since it is enough to compute for sufficiently large $|\xi_{y}|$, we have

\begin{eqnarray}
d(0_{y},\xi_{y})
&=& \int_{0}^{1} \sqrt{\omega (\dot{\gamma(t)},\dot{\gamma(t)})}dt \nonumber\\
&=& \int_{0}^{1} \sqrt{\frac{|\dot{w_{t}}|^{2}}{|w_{t}|^{2}} \varphi(\tau)}dt \nonumber\\
&=& \int_{0}^{1} \frac{\dot{\theta_{t}}}{\theta_{t}} \sqrt{\varphi(\tau)}dt \nonumber\\
&=& \int_{c}^{1} |w_{1}|^{\frac{\hat{S}}{n(n+1)}} \dot{\theta_{t}} \theta_{t}^{\frac{\hat{S}}{n(n+1)} - 1}dt +  \mbox{l.o.t}. \nonumber\\
&=& O( |\xi_{y}|^{\frac{\hat{S}}{n(n+1)}})
\label{case 1.2}
\end{eqnarray}
as $|\xi_{y}| \to \infty$,\
where $c \in (0,1)$ is some fixed constant.\
Thus, the statement $(a)$ follows.\

$(b)$ Recall that the positive function $\varphi(\tau)$ in the case $(b)$ is written as
\[
\varphi(\tau) = \tau^{1-n} - \tau^{-n}.
\]
Similarly, we have
\begin{eqnarray*}
\frac{d\tau}{dv}
&=& \frac{2\tau^{1-n}}{v} + \mbox{l.o.t.}
\end{eqnarray*}
Then,
\begin{eqnarray*}
\tau^{n} = O(\log |\xi_{y}|),
\label{case 2.1}
\end{eqnarray*}
as $|\xi_{y}| \to \infty$.\

Denote $\gamma(t) = (y,w_{t})$.\
Similarly, we have
\begin{eqnarray}
d(0_{y},\xi_{y})
&=& \int_{0}^{1} \sqrt{\omega (\dot{\xi_{y}^{t}},\dot{\xi_{y}^{t}})}dt \nonumber\\
&=& \int_{c}^{1} \frac{\dot{\theta_{t}}}{\theta_{t}} \tau^{\frac{1-n}{2}} dt + \mbox{l.o.t}. \nonumber\\
&=& A \int_{c}^{1} \frac{\dot{\theta_{t}}}{\theta_{t}} (\log |w_{1}| + \log \theta_{t})^{\frac{1-n}{2n}} dt + \mbox{l.o.t}. \nonumber\\
&=& O((\log |\xi_{y}|)^{\frac{n+1}{2n}})
\label{case 2.2}
\end{eqnarray}
as $|\xi_{y}| \to \infty$,\
where $c \in (0,1)$ and $A>0$ are some fixed constants.\
Thus, the statement $(b)$ follows.
\end{proof}

Using Lemma \ref{app lemma}, we prove Proposition \ref{volume growth}.\

{\it Proof of Proposition $\ref{volume growth}$.}\
It is enough to compute the volume growth of a geodesic ball in $L^{-1}$ of radius $r$ centered at $0_{x}$.\
Since the restriction of $\omega$ to the zero section is $\theta$, we have
\begin{eqnarray*}
d(0_{x},\xi_{y})
&=& d(0_{x},0_{y}) + d(0_{y},\xi_{y}) \nonumber\\
&\leq& \mbox{diam}(X,\theta) + d(0_{y},\xi_{y}).
\end{eqnarray*}
Thus,
\begin{equation}
- \mbox{diam}(X,\theta) + d(0_{y},\xi_{y}) \leq d(0_{x},\xi_{y}) \leq \mbox{diam}(X,\theta) + d(0_{y},\xi_{y}).
\label{case 3}
\end{equation}

By Stokes' theorem,  we have
\begin{eqnarray}
\int_{B(0_{x},r)} \omega^{n+1}
&=& \int_{B(0_{x},r)} \tau^{n} \varphi(\tau) \sqrt{-1} \partial s \wedge \overline{\partial} s \wedge p^{*} \theta^{n} \nonumber\\
&=& \frac{1}{n+1} \int_{B(0_{x},r)} \sqrt{-1} \partial s \wedge \overline{\partial} (\tau^{n+1}) \wedge p^{*} \theta^{n} \nonumber\\
&=& - \frac{1}{n+1} \Bigl( \int_{\partial B(0_{x},r)} \tau^{n+1} \sqrt{-1} \partial s \wedge p^{*} \theta^{n} + (2 \pi)^{n+1} c_{1}(L)^{n} \Bigr) \nonumber\\
&=& - C_{1} \int_{\partial B(0_{x},r)} \tau^{n+1} \sqrt{-1} \frac{dw}{w} \wedge p^{*} \theta^{n} - C_{2},
\label{case 1.3}
\end{eqnarray}
where $C_{1}$ and $C_{2}$ are positive  constants depending only on $n$ and $L$.\

In the case $(1)$, previous computations (\ref{tau order1}) and (\ref{case 3}) imply 
\begin{equation}
\tau^{n+1} = O(r^{2(n+1)}).
\end{equation}
For $r> \mbox{diam} (X,\theta)$, the residue theorem holds for each $y \in X$.\
Thus, we have
\begin{equation}
- \int_{\partial B(0_{x},r)} \sqrt{-1} \frac{dw}{w} \wedge p^{*} \theta^{n} = (2 \pi)^{n+1} c_{1}(L)^{n}.
\label{residue formula}
\end{equation}
The formula (\ref{case 1.3}) implies the first statement in Proposition \ref{volume growth}.\

In the case $(2)$, previous computations (\ref{tau order2}) and (\ref{case 3}) imply
\begin{equation}
\tau^{n+1} = O(r^{2}).
\end{equation}
Similarly, (\ref{residue formula}) and the formula (\ref{case 1.3}) imply the second statement in Proposition \ref{volume growth}.\
\sq

\section{
The higher order decay}
\label{sec:3}

In this section, we prove Theorem \ref{scalar curvature decay}.\
Let $(X,L_{X})$ be an $n$-dimensional polarized manifold.\
Let $h_{X}$ be a Hermitian metric on the line bundle $L_{X}$ which defines a {\kah} metric $\theta_{X}$ on $X$.\
Then, the restriction $h_{D}$ of $h_{X}$ to a line bundle $L_{D} := L_{X}|_{D}$ over $D$ defines a {\kah} metric $\theta_{D}$ on $D$.\
Let $\sigma_{D} \in H^{0}(X,L_{X})$ be a defining section of $D$.\
Set $t := \log ||\sigma_{D}||^{-2}$, where $||\sigma_{D}||^{2} = h_{X}(\sigma_{D},\sigma_{D}) $.\
From the construction of the complete {\kah} metrics in Theorem \ref{scalar-flat line} and \cite{BK}, we can define a complete {\kah} metric $\omega_{0}$ on $X \setminus D$ by
\begin{eqnarray*}
\omega_{0}
&:=& \frac{n(n-1)}{\hat{S}_{D}}\dol \exp \left(\frac{\hat{S}_{D}}{n(n-1)}t\right)\\
&=& \exp \left(\frac{\hat{S}_{D}}{n(n-1)}t\right) \left( \theta_{X} + \frac{\hat{S}_{D}}{n(n-1)} \sqrt{-1} \partial t \wedge \overline{\partial} t \right),
\end{eqnarray*}
where $\hat{S}_{D} > 0$ is the average value of the scalar curvature $S(\theta_{D})$:\
$$
\hat{S}_{D} := \frac{\displaystyle\int_{D} S(\theta_{D}) \theta_{D}^{n-1}}{\displaystyle\int_{D} \theta_{D}^{n-1}} = \frac{(n-1) c_{1}(K_{D}^{-1}) \cup c_{1}(L_{D})^{n-2}}{c_{1}(L_{D})^{n-1}}.
$$
By similar ways in Section 2, we have the followings
\begin{lemma}
\label{geo}
Let $r$ be a distance function defined by $\omega_{0}$ from a fixed point $x_{0} \in X \setminus D$.\
Then,
\[
r(x) = O(||\sigma_{D}||^{-\frac{\hat{S}_{D}}{n(n-1)}}(x))
\]
as $x \to D$.\
\end{lemma}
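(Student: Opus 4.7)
\textbf{Proof plan for Lemma \ref{geo}.}

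The idea is to mimic the fiber-length calculation of Lemma \ref{app lemma}(a), since near $D$ the complement $X\setminus D$ has essentially the same asymptotic structure as the complement of the zero section in the normal bundle $N_{D/X}=L_{D}$, and the metric $\omega_{0}$ is modelled on the LeBrun--Simanca metric of Section 2. Set $a:=\hat{S}_{D}/(n(n-1))$; the goal is to show $r(x)=O(\|\sigma_{D}\|^{-a}(x))$.

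First, I would fix a point $p_{0}\in D$ and choose local holomorphic coordinates $(z',w)$ on $X$ near $p_{0}$ with $D=\{w=0\}$, together with a local holomorphic frame $e$ of $L_{X}$ for which $\sigma_{D}=w\cdot e$. Writing $h:=\|e\|_{h_{X}}^{2}$, one has $\|\sigma_{D}\|^{2}=|w|^{2}h$ and $t=-\log|w|^{2}-\log h$. Using the identity
$$
\omega_{0}=e^{at}\bigl(a\,\sqrt{-1}\,\partial t\wedge\overline{\partial}t+\sqrt{-1}\partial\overline{\partial}t\bigr),
$$
and the fact that $\sqrt{-1}\partial\overline{\partial}t=-\sqrt{-1}\partial\overline{\partial}\log h$ is the (smooth) curvature form of $h_{X}$, while
$$
\sqrt{-1}\,\partial t\wedge\overline{\partial}t=\sqrt{-1}\,\frac{dw\wedge d\overline{w}}{|w|^{2}}+\text{(terms bounded as $w\to 0$)},
$$
I would extract the leading asymptotic
$$
\omega_{0}(\partial_{w},\partial_{\overline{w}})\;\sim\;a\,h^{-a}|w|^{-2a-2}\qquad\text{as }w\to 0,
$$
with base--direction components of order $e^{at}=O(|w|^{-2a})$ and mixed terms of lower order.

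Next I would bound $r(x)$ from above by the length of a specific piecewise path from $x_{0}$ to $x$: first a path of bounded length from $x_{0}$ to some fixed reference point $x_{1}$ inside a small coordinate neighbourhood of the same $p_{0}\in D$ that $x$ lies near (this contributes $O(1)$), and then the radial segment $\gamma(s)=(z'(x),\,s\,e^{i\arg w(x)})$ for $s\in[|w(x)|,\,\rho_{0}]$ traversed in the fibre direction at fixed $z'$. Along this radial segment $|\dot{w}|=1$, so by the calculation above the Riemannian length is
$$
\int_{|w(x)|}^{\rho_{0}}\sqrt{2\,\omega_{0}(\partial_{w},\partial_{\overline{w}})}\,ds\;=\;\int_{|w(x)|}^{\rho_{0}}\sqrt{2a}\,h^{-a/2}\,s^{-a-1}\,ds+\text{l.o.t.}\;=\;O\bigl(|w(x)|^{-a}\bigr),
$$
and since $\|\sigma_{D}\|(x)=|w(x)|\sqrt{h}$ is comparable to $|w(x)|$ on the chart, this gives $r(x)=O(\|\sigma_{D}\|^{-a}(x))$, which is the claim.

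The main technical point I expect to need care with is justifying that the base--direction and cross terms in $\omega_{0}$ really do contribute only lower order along the chosen path, and that the same radial reduction works uniformly in the base point (so the $O(\cdot)$ constant does not blow up as we move $p_{0}$ around $D$). This is handled by the compactness of $D$ combined with the fact that, in the local expression above, every non--leading term is multiplied by $e^{at}|w|^{0}$ with an at worst smooth bounded coefficient, while the leading term carries the extra $|w|^{-2}$; choosing $\rho_{0}$ once and for all small enough on a finite cover of $D$ by trivializing charts gives a uniform comparison. Note that this argument yields only an upper bound, which is exactly what Lemma \ref{geo} asserts via the $O(\cdot)$ notation; the matching lower bound (which will be needed in the later analysis of the weighted Banach spaces in Section 4) would follow from the same expansion by observing that the radial direction is itself close to a geodesic to leading order.
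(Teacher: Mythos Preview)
Your proposal is correct and follows exactly the route the paper intends: the paper does not spell out a proof of Lemma~\ref{geo} but simply says ``By similar ways in Section~2,'' i.e.\ it defers to the fibre-length computation of Lemma~\ref{app lemma}(a), which is precisely what you carry out (with the roles of $|\xi_y|\to\infty$ and $\|\sigma_D\|\to 0$ swapped via $t=\log\|\sigma_D\|^{-2}$). Your treatment of uniformity in the base point via a finite cover of $D$ is the natural way to make the ``similarly'' rigorous, and in fact gives more detail than the paper itself provides.
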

\begin{lemma}
\label{volume growth 2}
The volume growth of $\omega_{0}$ is given by
$$
{\rm Vol}_{\omega_{0}}(B(x_{0},r) ) = O(r^{2n})
$$
as $r \to \infty$.\
\end{lemma}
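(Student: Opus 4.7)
The plan is to follow the Stokes' theorem computation from the proof of Proposition \ref{volume growth}. Write $a := \hat{S}_D/(n(n-1))$ and $\tau := \exp(at) = \|\sigma_D\|_{h_X}^{-2a}$, so that $\omega_0 = a^{-1}\dol \tau$; by Poincar\'e--Lelong, $\dol t = \theta_X$ on $X\setminus D$ and hence $\omega_0 = \tau\, \theta_X + a\tau\, \sqrt{-1}\,\partial t \wedge \bar\partial t$.

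The main step is to produce a global primitive for $\omega_0^n$ on $X \setminus D$. Using closedness of $\omega_0$, together with $\sqrt{-1}\bar\partial \tau = a\tau\,\sqrt{-1}\bar\partial t$ and $\bar\partial t \wedge \bar\partial t = 0$ (which annihilates the cross term in the binomial expansion of $\omega_0^{n-1}$), one finds $\sqrt{-1}\bar\partial \tau \wedge \omega_0^{n-1} = a\tau^n\,\sqrt{-1}\bar\partial t \wedge \theta_X^{n-1}$, and therefore
\[
\omega_0^n = d\bigl(\tau^n\, \sqrt{-1}\bar\partial t \wedge \theta_X^{n-1}\bigr).
\]
Applying Stokes' theorem on $B(x_0, r) \subset X \setminus D$ (with the usual boundary smoothing as in the proof of Proposition \ref{volume growth}) yields
\[
\int_{B(x_0, r)} \omega_0^n = \int_{\partial B(x_0, r)} \tau^n\, \sqrt{-1}\bar\partial t \wedge \theta_X^{n-1}.
\]
By Lemma \ref{geo}, interpreted two-sidedly as $r(x) \asymp \|\sigma_D\|^{-a}$, one has $\tau \leq Cr^2$ pointwise on $\partial B(x_0, r)$, so it remains to bound $\int_{\partial B(x_0, r)} \sqrt{-1}\bar\partial t \wedge \theta_X^{n-1}$ uniformly in $r$.

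For this last step I would exploit the identity $d\bigl(\sqrt{-1}\bar\partial t \wedge \theta_X^{n-1}\bigr) = \dol t \wedge \theta_X^{n-1} = \theta_X^n$ and apply Stokes a second time on $B(x_0, r) \cap \{\|\sigma_D\| \geq \epsilon\}$: the interior term contributes at most $\mathrm{Vol}_{\theta_X}(X) < \infty$, and the contribution from the level hypersurface $\{\|\sigma_D\| = \epsilon\}$ tends as $\epsilon \to 0$ to a topological constant proportional to $c_1(L_D)^{n-1}$ via the fibrewise residue calculation that produced \eqref{residue formula}. The main obstacle is the two-sided comparison $r \asymp \|\sigma_D\|^{-a}$ itself: the upper bound is Lemma \ref{geo}, while the matching lower bound (that a geodesic approaching $D$ covers distance at least of order $\|\sigma_D\|^{-a}$) follows by integrating the radial ODE $d\tau/dv = \varphi(\tau)\cdot 2/v$ along a length-minimizing ray transverse to $D$, exactly as in the proof of Lemma \ref{app lemma} case $(a)$. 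Combining these estimates gives $\mathrm{Vol}_{\omega_0}(B(x_0, r)) = O(r^{2n})$.
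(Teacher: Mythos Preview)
Your approach is essentially the route the paper takes (the paper defers the proof to the analogous computation in Section \ref{sec:2}, i.e., Proposition \ref{volume growth}). The primitive identity $\omega_0^n = d\bigl(\tau^n\sqrt{-1}\,\bar\partial t \wedge \theta_X^{n-1}\bigr)$ is correct, and the double Stokes strategy is the right idea.

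There is, however, a genuine gap. After the first Stokes you write that ``$\tau \le Cr^2$ pointwise on $\partial B(x_0,r)$, so it remains to bound $\int_{\partial B(x_0,r)}\sqrt{-1}\,\bar\partial t\wedge\theta_X^{n-1}$.'' But you cannot factor $\tau^n$ out of $\int_{\partial B(x_0,r)}\tau^n\,\sqrt{-1}\,\bar\partial t\wedge\theta_X^{n-1}$ using only a pointwise bound on $\tau$: the $(2n-1)$-form $\sqrt{-1}\,\bar\partial t\wedge\theta_X^{n-1}$ restricted to the geodesic sphere need not have a definite sign, so a sup bound on the coefficient does not control the integral. The clean fix is to replace $B(x_0,r)$ by the sublevel set $\{\tau\le T\}$, which is compact in $X\setminus D$ and contains $B(x_0,r)$ for $T=Cr^2$ by the two-sided comparison you already invoke. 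On $\{\tau=T\}$ the factor $\tau^n=T^n$ is a genuine constant, and your two Stokes applications give
\[
\mathrm{Vol}_{\omega_0}(\{\tau\le T\}) \;=\; T^n\int_{\{\tau=T\}}\sqrt{-1}\,\bar\partial t\wedge\theta_X^{n-1}
\;=\; T^n\int_{\{\tau\le T\}}\theta_X^n \;\le\; T^n\,\mathrm{Vol}_{\theta_X}(X),
\]
which is the desired $O(r^{2n})$. Note also that your $\epsilon\to 0$ residue manoeuvre is unnecessary here: since $B(x_0,r)$ (or $\{\tau\le T\}$) is already compactly contained in $X\setminus D$, the second Stokes on this region directly bounds the boundary term by $\mathrm{Vol}_{\theta_X}(X)$, with no limit near $D$ involved.
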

Thus, Lemma \ref{geo} implies that it is enough to show that
$$
S(\omega_{0}) = O(||\sigma_{D}||^{2+2\hat{S}_{D}/n(n-1)})
$$
as $\sigma_{D} \to 0$.\

To show Theorem \ref{scalar curvature decay}, we have to compute ${\rm Ric}(\theta_{X})$ and ${\rm Ric}(\omega_{0})$.\
Unfortunately, we can't compute the scalar curvature of $\omega_{0}$ in the same way in the proof of Proposition \ref{ricci and scalar}.\
So, we study the determinant of $\theta_{X}$ and the inverse matrix of $\omega_{0}$.\
First, we recall fundamental results in matrix analysis (see \cite{Zhan}).\

\subsection{
Matrix analysis}

To compute Ricci forms of {\kah} metrics $\theta_{X}, \omega_{0}$, we need the following lemma :

\begin{lemma}
\label{determinant}
Consider the following matrix
\begin{eqnarray*}
T = \left[ 
\begin{array}{cc}
A & B \\
C & D \\
\end{array} 
\right],
\end{eqnarray*}
where $A$ is an invertible matrix.\
Then, the determinant of $T$ is given by
$$
\det T = \det A \det (D - C A^{-1} B).
$$
\end{lemma}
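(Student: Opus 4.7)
The plan is to prove this Schur complement identity by block Gaussian elimination, reducing $T$ to a block upper-triangular matrix whose determinant is immediate. Since $A$ is invertible, I would left-multiply $T$ by the elementary block matrix
$$
E = \begin{pmatrix} I & 0 \\ -CA^{-1} & I \end{pmatrix},
$$
which has determinant $1$ because it is block lower-triangular with identity blocks on the diagonal. A direct block computation gives
$$
E\, T = \begin{pmatrix} I & 0 \\ -CA^{-1} & I \end{pmatrix} \begin{pmatrix} A & B \\ C & D \end{pmatrix} = \begin{pmatrix} A & B \\ 0 & D - CA^{-1}B \end{pmatrix}.
$$

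Next I would take determinants of both sides. The right-hand side is block upper-triangular, so its determinant factors as $\det A \cdot \det(D - CA^{-1}B)$. The left-hand side satisfies $\det(E\, T) = \det E \cdot \det T = \det T$. Combining these identities yields exactly
$$
\det T = \det A \cdot \det(D - CA^{-1}B),
$$
which is the claim.

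The only subtlety is justifying that the determinant of a block triangular matrix with an identity diagonal block equals the determinant of the remaining diagonal block, and more generally that for block triangular matrices the determinant is the product of diagonal block determinants. This follows from a Laplace expansion along the rows (or columns) of the zero block, or by an induction on block size, and is entirely standard. Since everything reduces to these well-known facts about block triangular matrices, I do not anticipate any real obstacle; the argument is essentially one line of block multiplication together with the multiplicativity of the determinant.
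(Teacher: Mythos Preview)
Your proof is correct and is essentially the same idea as the paper's: reduce $T$ to block-triangular form via elementary block operations and read off the determinant. The only difference is cosmetic. The paper writes the full three-factor block LDU decomposition
\[
T = \begin{pmatrix} I & 0 \\ CA^{-1} & I \end{pmatrix}
\begin{pmatrix} A & 0 \\ 0 & D - CA^{-1}B \end{pmatrix}
\begin{pmatrix} I & A^{-1}B \\ 0 & I \end{pmatrix},
\]
whereas you stop after one block row operation, obtaining the block upper-triangular matrix $\begin{pmatrix} A & B \\ 0 & D - CA^{-1}B \end{pmatrix}$. Your version is marginally shorter for the determinant alone; the paper's full factorization has the side benefit that it is immediately reused to derive the block inverse formula in the next lemma by simply inverting each of the three factors.
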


The block $D - C A^{-1} B$ is called the Schur complement of the block $D$ of the matrix $T$ (see \cite[p.23]{Zhan}).
For the reader's convenience, we give a proof of this lemma.

\begin{proof}
The result immediately follows from the following formula :
\begin{eqnarray*}
\left[ 
\begin{array}{cc}
A & B \\
C & D \\
\end{array} 
\right]
=
\left[ 
\begin{array}{cc}
I & O \\
C A^{-1} & \dot{I} \\
\end{array} 
\right]
\left[ 
\begin{array}{cc}
A & O \\
O & D - C A^{-1} B \\
\end{array} 
\right]
\left[ 
\begin{array}{cc}
I & A^{-1}B \\
O & \dot{I} \\
\end{array} 
\right]
\end{eqnarray*}
where $I$ and $\dot{I}$ denote suitable identity matrices.
\end{proof}

To take a trace with respect to the {\kah} metric $\omega_{0}$, we need the following inverse matrix formula (see \cite[p.24]{Zhan}) :

\begin{lemma}
\label{inverse formula}
Consider the following matrix
\begin{eqnarray*}
T = \left[ 
\begin{array}{cc}
A & B \\
C & D \\
\end{array} 
\right].
\end{eqnarray*}
Assume that $A$ and $S := D - C A^{-1} B$ are invertible.\
Then, $T$ is invertible and the inverse matrix of $T$ can be written as
\begin{eqnarray*}
T^{-1} = \left[ 
\begin{array}{cc}
A^{-1} + A^{-1} B S^{-1} C A^{-1} & - A^{-1} B S^{-1} \\
- S^{-1} C A^{-1} & S^{-1} \\
\end{array} 
\right].
\end{eqnarray*}
\end{lemma}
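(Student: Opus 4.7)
The plan is to exploit the block LDU factorization already used in the proof of Lemma \ref{determinant}. Writing
\begin{equation*}
T = \begin{bmatrix} I & O \\ CA^{-1} & \dot{I} \end{bmatrix} \begin{bmatrix} A & O \\ O & S \end{bmatrix} \begin{bmatrix} I & A^{-1}B \\ O & \dot{I} \end{bmatrix},
\end{equation*}
each of the three factors is invertible: the outer two are block unitriangular, and the middle one is block diagonal with invertible diagonal blocks $A$ and $S$ by hypothesis. Hence $T$ is invertible, and one obtains $T^{-1}$ by inverting each factor and multiplying in the reverse order.

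Concretely, I would first note that the inverses of the outer factors are obtained simply by negating the off-diagonal block:
\begin{equation*}
\begin{bmatrix} I & O \\ CA^{-1} & \dot{I} \end{bmatrix}^{-1} = \begin{bmatrix} I & O \\ -CA^{-1} & \dot{I} \end{bmatrix}, \qquad \begin{bmatrix} I & A^{-1}B \\ O & \dot{I} \end{bmatrix}^{-1} = \begin{bmatrix} I & -A^{-1}B \\ O & \dot{I} \end{bmatrix},
\end{equation*}
while the block diagonal middle factor inverts to $\operatorname{diag}(A^{-1}, S^{-1})$. Multiplying the three inverses in reversed order and collecting blocks gives exactly the claimed formula for $T^{-1}$.

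As a sanity check (and as an alternative proof), one can just multiply the claimed $T^{-1}$ against $T$ directly: the $(1,1)$-block is $A^{-1}A + A^{-1}BS^{-1}CA^{-1}A - A^{-1}BS^{-1}C = I$, the $(1,2)$-block is $A^{-1}B + A^{-1}BS^{-1}CA^{-1}B - A^{-1}BS^{-1}D = A^{-1}B - A^{-1}BS^{-1}(D-CA^{-1}B) = O$, and the $(2,1)$ and $(2,2)$ blocks are handled analogously using the definition $S = D - CA^{-1}B$.

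There is no real obstacle here: the identity is entirely formal linear algebra, and the only thing to be careful about is that $A$ and $S$ (not $D$) are the blocks assumed to be invertible. The LDU approach makes this transparent and avoids any guessing of the formula.
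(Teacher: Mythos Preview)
Your proof is correct and follows essentially the same route as the paper: both use the block LDU factorization from Lemma~\ref{determinant}, invert each factor (negating the off-diagonal blocks of the unitriangular factors and inverting the diagonal blocks of the middle factor), and multiply in reverse order. Your added direct-multiplication sanity check is a nice bonus but not needed for the argument.
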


Similarly, we give a proof of this lemma for the reader's convenience.

\begin{proof}
From the proof of the previous lemma, we have
\begin{eqnarray*}
\left[ 
\begin{array}{cc}
A & B \\
C & D \\
\end{array} 
\right]^{-1}
&=&
\left[ 
\begin{array}{cc}
I & A^{-1}B \\
O & \dot{I} \\
\end{array} 
\right]^{-1}
\left[ 
\begin{array}{cc}
A & O \\
O & S \\
\end{array} 
\right]^{-1}
\left[ 
\begin{array}{cc}
I & O \\
C A^{-1} & \dot{I} \\
\end{array} 
\right]^{-1}\\
&=&
\left[ 
\begin{array}{cc}
I & - A^{-1}B \\
O & \dot{I} \\
\end{array} 
\right]
\left[ 
\begin{array}{cc}
A^{-1} & O \\
O & S^{-1} \\
\end{array} 
\right]
\left[ 
\begin{array}{cc}
I & O \\
- C A^{-1} & \dot{I} \\
\end{array} 
\right]\\
&=&
\left[ 
\begin{array}{cc}
A^{-1} + A^{-1} B S^{-1} C A^{-1} & - A^{-1} B S^{-1} \\
- S^{-1} C A^{-1} & S^{-1} \\
\end{array} 
\right].
\end{eqnarray*}
\end{proof}

\subsection{
Local trivialization and normal coordinates}

Before studying the scalar curvature $S(\omega_{0})$ near $D$, we choose a local trivialization and normal coordinates around a point of $D$.\

First, fix a point $ p \in D $.\
Since $D$ is the smooth hypersurface of $X$, there exist local holomorphic coordinates $(z^{1},z^{2},...,z^{n-1},w)$ centered at $p$ where $D$ is defined by $ \{ w = 0 \} $ locally and $(z^{1},z^{2},...,z^{n-1})$ are local holomorphic coordinates of $D$.\
Then, there exists a local trivialization of $L_{X}$ such that we can write as $||\sigma_{D}||^{2} = |w|^{2} e^{- \varphi}$ for a smooth function $\varphi$ near $p$ satisfying
\[
d \varphi (0) = 0.
\]
We may assume that if $(z^{1},z^{2},...,z^{n-1},w) = (0,0,...,0,w)$, we have
\begin{equation}
\label{good local coor}
\varphi = O(|w|^{2}).
\end{equation}

Second, we consider the existence of normal coordinates with respect to the {\kah} metric $\theta_{X}$ around $p$ preserving the condition (\ref{good local coor}).\
Since $\theta_{X} = \dol t = \dol \log ||\sigma_{D}||^{-2}$ is the {\kah} metric on $X$, in coordinates above, we can write locally as
\[
\theta_{X} = \sqrt{-1} \left( \sum_{i,j =1}^{n-1} g_{i,\overline{j}} dz^{i} \wedge d\overline{z}^{j} + \sum_{a=1}^{n-1} \left( g_{a,\overline{w}} dz^{a} \wedge d\overline{w} + g_{w,\overline{a}} dw \wedge d\overline{z}^{a} \right) + g_{w,\overline{w}} dw \wedge d\overline{w} \right).
\]
For simplicity, write $(z^{1},...,z^{n-1},w) = (z;w)$.\
Consider another holomorphic coordinate chart $(\hat{z}^{1},...,\hat{z}^{n-1},w) = (\hat{z};w)$ around $p \in D$.\
Directly, we have
\begin{eqnarray*}
\frac{\partial}{\partial w} \left( g_{\hat{i}, \overline{\hat{j}}} \right)
&=&
\frac{\partial}{\partial w} \left( \frac{\partial z^{k}}{\partial \hat{z}^{i}} \frac{\partial \overline{z}^{l}}{\partial \overline{\hat{z}}^{j}}  g_{k, \overline{l}} \right)\\
&=&
\frac{\partial}{\partial w} \frac{\partial z^{k}}{\partial \hat{z}^{i}}  \left( \frac{\partial \overline{z}^{l}}{\partial \overline{\hat{z}}^{j}}  g_{k, \overline{l}} \right) + \frac{\partial z^{k}}{\partial \hat{z}^{i}} \frac{\partial \overline{z}^{l}}{\partial \overline{\hat{z}}^{j}} \frac{\partial g_{k, \overline{l}}}{\partial w}.
\end{eqnarray*}
Set the condition
\[
\frac{\partial z^{k}}{\partial \hat{z}^{i}}(0;0) = \delta_{k,i}.
\]
So, we have
\[
\frac{\partial}{\partial w} \left( g_{\hat{i}, \overline{\hat{j}}} \right) (0;0) = \frac{\partial}{\partial w} \frac{\partial z^{k}}{\partial \hat{z}^{i}} g_{k, \overline{j}}(0;0) + \frac{\partial g_{i, \overline{j}}}{\partial w}(0;0).
\]
Considering the equation $\partial g_{\hat{i}, \overline{\hat{j}}} / \partial w (0;0) = 0$, we have
\[
\frac{\partial}{\partial w} \frac{\partial z^{k}}{\partial \hat{z}^{i}}(0;0) = - \sum_{j} g^{k, \overline{j}}(0;0) \frac{\partial g_{i, \overline{j}}}{\partial w}(0;0).
\]
Thus, we have
\begin{lemma}
By the change of holomorphic coordinates $(\hat{z};w)$ around $p \in D$ defined by
\[
z^{\alpha} = \sum_{i=1}^{n-1} \hat{z}^{i} \left(\delta_{i,\alpha} - w \sum_{j=1}^{n-1} g^{\alpha,\overline{j}}(0;0) \frac{\partial g_{i,\overline{j}}}{\partial w}(0;0) \right)\hspace{7pt} (\alpha = 1,2,...,n-1),
\]
we have
\begin{equation}
\label{good local coor1}
\frac{\partial g_{\hat{i},\overline{\hat{j}}}}{\partial w}(0;0) = 0.
\end{equation}
In particular, at $(\hat{z};w) = (0;w)$, we have
\begin{equation}
\label{good local coor2}
g_{\hat{i},\overline{\hat{j}}}(0;w) = g_{\hat{i},\overline{\hat{j}}}(0;0) + O(|w|^{2}).
\end{equation}
\end{lemma}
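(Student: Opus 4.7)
The plan is to directly verify the two claims from the change-of-variable formula for the metric components, using the computation laid out in the paragraph preceding the lemma.

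\textbf{Step 1 (verify the coordinate change).} First I would check that the displayed map $(\hat{z};w) \mapsto (z;w)$ is a local biholomorphism near $p$: the formula is linear in $\hat{z}$ and polynomial in $w$, and at the origin
\[
\frac{\partial z^{\alpha}}{\partial \hat{z}^{i}}(0;0) = \delta_{i,\alpha}, \qquad \frac{\partial^{2} z^{\alpha}}{\partial w\,\partial \hat{z}^{i}}(0;0) = -\sum_{j} g^{\alpha,\overline{j}}(0;0)\,\frac{\partial g_{i,\overline{j}}}{\partial w}(0;0),
\]
so the differential at $p$ is the identity. Because the change leaves $w$ untouched and fixes $\{w=0\}$ pointwise, the hypersurface $D$ is still cut out by $w=0$ and the normalization $(\ref{good local coor})$ on $\varphi$ (which depends only on $w$ along $\{\hat{z}=0\}=\{z=0\}$) is preserved.

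\textbf{Step 2 (the vanishing derivative).} Next I would apply the tensorial transformation rule
\[
g_{\hat{i},\overline{\hat{j}}} = \frac{\partial z^{k}}{\partial \hat{z}^{i}}\,\frac{\partial \overline{z}^{l}}{\partial \overline{\hat{z}}^{j}}\,g_{k,\overline{l}}
\]
and differentiate in $w$. Since $\partial \overline{z}^{l}/\partial \overline{\hat{z}}^{j}$ depends only on antiholomorphic variables, only two product-rule terms survive; evaluating at $(0;0)$ and using $\partial z^{k}/\partial \hat{z}^{i}(0;0)=\delta_{k,i}$ gives
\[
\frac{\partial g_{\hat{i},\overline{\hat{j}}}}{\partial w}(0;0) = \frac{\partial^{2} z^{k}}{\partial w\,\partial \hat{z}^{i}}(0;0)\,g_{k,\overline{j}}(0;0) + \frac{\partial g_{i,\overline{j}}}{\partial w}(0;0),
\]
and plugging in the second identity from Step 1 makes the two terms cancel. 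This is the first conclusion $(\ref{good local coor1})$; indeed the coefficient in the coordinate change was chosen precisely to produce this cancellation.

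\textbf{Step 3 (the $O(|w|^{2})$ expansion).} Finally I would Taylor expand $g_{\hat{i},\overline{\hat{j}}}(0;w)$ in $w,\overline{w}$ about $(0;0)$. The $w$-derivative vanishes by Step 2. For the $\overline{w}$-derivative I would invoke the Hermitian symmetry $g_{\hat{i},\overline{\hat{j}}} = \overline{g_{\hat{j},\overline{\hat{i}}}}$, which gives
\[
\frac{\partial g_{\hat{i},\overline{\hat{j}}}}{\partial \overline{w}}(0;0) = \overline{\frac{\partial g_{\hat{j},\overline{\hat{i}}}}{\partial w}(0;0)} = 0,
\]
since Step 2 applies for every index pair. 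Thus both linear terms vanish and $(\ref{good local coor2})$ follows. There is no real obstacle here; the only point to be slightly careful about is that Hermitian symmetry is what upgrades the single relation $\partial_w g_{\hat{i},\overline{\hat{j}}}(0;0)=0$ to vanishing of the full first-order jet, so no separate normalization of the $\overline{w}$-direction is needed.
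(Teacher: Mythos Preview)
Your proposal is correct and follows essentially the same approach as the paper: the computation preceding the lemma is exactly your Step~2 (tensorial transformation rule, differentiate in $w$, evaluate at the origin, observe the cancellation forced by the chosen coefficient), and the paper leaves the passage to \eqref{good local coor2} implicit, whereas you spell out the Hermitian symmetry argument. Your Step~1 checks are routine additions the paper omits.
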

Consequently, we obtain
\begin{prop}
We can find a local trivialization of $L_{X}$ and local holomorphic coordinates so that
\[
\varphi = O(|w|^{2}) , \hspace{8pt} g_{\hat{i},\overline{\hat{j}}}(0;w) = g_{\hat{i},\overline{\hat{j}}}(0;0) + O(|w|^{2}).
\]
at $(\hat{z}^{1}, ... ,\hat{z}^{n-1}, w) = (0,...,0,w)$.\
\end{prop}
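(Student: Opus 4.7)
The proposition combines the trivialization condition on $\varphi$ obtained in the discussion before the preceding lemma with the normal-coordinate condition on $g_{\hat i,\overline{\hat j}}$ from the lemma itself, so the plan is to perform the two normalizations in sequence and then check that they are compatible.

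First I would fix local holomorphic coordinates $(z^{1},\dots,z^{n-1},w)$ centered at $p$ in which $D=\{w=0\}$ locally, and choose a local holomorphic trivialization of $L_{X}$ so that $\sigma_{D}$ corresponds to $w$, giving $\|\sigma_{D}\|_{h_{X}}^{2}=|w|^{2}e^{-\varphi}$ for some real smooth $\varphi$. Multiplying the trivializing section by a unit of the form $e^{-h(z,w)}$ with $h$ holomorphic changes $\varphi$ by $2\,\mathrm{Re}\,h$, so an appropriate choice of the linear part of $h$ arranges $\varphi(0)=0$ and $d\varphi(0)=0$; restricting to $z=0$ and Taylor expanding in $w,\bar{w}$ then gives $\varphi(0;w)=O(|w|^{2})$ automatically from the vanishing of the constant and linear terms.

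Next I would apply the change of holomorphic coordinates constructed in the preceding lemma, which keeps the fibre variable $w$ fixed and replaces $z^{\alpha}$ by $\hat{z}^{\alpha}$ plus a term of the form $w$ times a linear function of $\hat{z}$. The main thing to verify is that this second normalization does not spoil the first, and this is the only real subtlety in the proof; fortunately, the coordinate change sends $\hat{z}=0$ to $z=0$, so the $w$-axis is preserved set-theoretically and $\varphi$ restricted to this axis is unchanged. By construction the same transformation also achieves $\partial g_{\hat i,\overline{\hat j}}/\partial w\,(0;0)=0$, and Taylor expansion along $(0;w)$ in the new coordinates then yields $g_{\hat i,\overline{\hat j}}(0;w)=g_{\hat i,\overline{\hat j}}(0;0)+O(|w|^{2})$. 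Assembling the two statements gives the proposition; no analytic estimates are required, only the transversality of the two normalizations, the trivialization choice acting in the line-bundle direction and the coordinate change acting tangentially to $D$ while fixing the locus $z=0$.
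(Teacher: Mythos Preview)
Your proposal is correct and follows the same two-step strategy as the paper: first normalize the trivialization so that $d\varphi(0)=0$, then apply the coordinate change of the preceding lemma, and finally check that the second step does not destroy the first. Your compatibility check is in fact slightly cleaner than the paper's, which re-verifies $d\varphi(0;0)=0$ in the new coordinates via the chain rule, whereas you observe directly that the change $z^{\alpha}=\sum_i \hat z^{i}(\cdots)$ sends $\{\hat z=0\}$ to $\{z=0\}$, so $\varphi$ restricted to the $w$-axis is literally unchanged.
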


\begin{proof}
In new local coordinates above, we have
\[
\frac{\partial \varphi}{\partial \hat{z}^{i}} = \frac{\partial \varphi}{\partial z^{j}} \frac{\partial z^{j}}{\partial \hat{z}^{i}} + \frac{\partial \varphi}{\partial w} \frac{\partial w}{\partial \hat{z}^{i}},
\]
and
\[
\frac{\partial z^{j}}{\partial \hat{z}^{i}}(0;0) = \delta_{i,j} , \hspace{7pt} \frac{\partial \varphi}{\partial z^{i}}(0;0) = \frac{\partial \varphi}{\partial w}(0;0) = 0.
\]
Thus, the proposition follows.
\end{proof}
For simplicity, we write new local coordinates $(\hat{z}^{1}, ... ,\hat{z}^{n-1}, w)$ by the same symbol $(z^{1}, ... ,z^{n-1}, w)$.\

\subsection{
Proof of Theorem \ref{scalar curvature decay}}

Recall that $\omega_{0}$ is written as
\[
\omega_{0} = \exp \left(\frac{\hat{S}_{D}}{n(n-1)}t\right) \left( \theta_{X} + \frac{\hat{S}_{D}}{n(n-1)} \sqrt{-1} \partial t \wedge \overline{\partial}t \right)
\]
and it is enough to show that
$$
S(\omega_{0}) = O(||\sigma_{D}||^{2+2\hat{S}_{D}/n(n-1)})
$$
as $\sigma_{D} \to 0$.\
First, we show
\begin{lemma}
\label{background ricci}
The Ricci form of $\omega_{0}$ is given by
$$
{\rm Ric }(\omega_{0}) = {\rm Ric }(\theta_{X}) - \frac{\hat{S}_{D}}{n-1} \theta_{X} - \dol \log \left( 1 + \frac{\hat{S}_{D}}{n(n-1)} || \partial t ||^{2}_{\theta_{X}} \right).
$$
\end{lemma}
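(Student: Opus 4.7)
The plan is to compute $\mathrm{Ric}(\omega_{0})$ directly from the defining formula $\mathrm{Ric}(\omega_{0}) = -\dol \log \omega_{0}^{n}$, by expressing the volume form $\omega_{0}^{n}$ explicitly in terms of $\theta_{X}^{n}$ and a pointwise function. The key simplification is that $\sqrt{-1}\partial t \wedge \overline{\partial}t$ is a rank-one $(1,1)$-form, so its wedge square vanishes.

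Set $a := \hat{S}_{D}/(n(n-1))$, so that $\omega_{0} = e^{at}\bigl(\theta_{X} + a\sqrt{-1}\partial t \wedge \overline{\partial}t\bigr)$. First I would expand $\omega_{0}^{n}$ using the binomial formula; because $(\sqrt{-1}\partial t \wedge \overline{\partial}t)^{2} = 0$, only the first two terms survive and one gets
\[
\omega_{0}^{n} = e^{nat}\left(\theta_{X}^{n} + na\,\sqrt{-1}\partial t \wedge \overline{\partial}t \wedge \theta_{X}^{n-1}\right).
\]
Next I would invoke the standard identity $n\,\sqrt{-1}\partial t \wedge \overline{\partial}t \wedge \theta_{X}^{n-1} = \|\partial t\|^{2}_{\theta_{X}}\,\theta_{X}^{n}$ to obtain
\[
\omega_{0}^{n} = e^{nat}\bigl(1 + a\|\partial t\|^{2}_{\theta_{X}}\bigr)\,\theta_{X}^{n}.
\]

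Taking $-\dol \log$ of both sides, using $\dol t = \theta_{X}$ and the fact that $-\dol \log \theta_{X}^{n} = \mathrm{Ric}(\theta_{X})$, yields
\[
\mathrm{Ric}(\omega_{0}) = \mathrm{Ric}(\theta_{X}) - na\,\theta_{X} - \dol \log\bigl(1 + a\|\partial t\|^{2}_{\theta_{X}}\bigr),
\]
and the identity $na = \hat{S}_{D}/(n-1)$ gives exactly the desired formula.

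There is no real obstacle here: once the rank-one structure of $\sqrt{-1}\partial t \wedge \overline{\partial}t$ is exploited to collapse the binomial expansion, the rest is bookkeeping. The only minor care needed is the normalization in the pointwise identity relating $\sqrt{-1}\partial t \wedge \overline{\partial}t \wedge \theta_{X}^{n-1}$ to $\|\partial t\|^{2}_{\theta_{X}}\theta_{X}^{n}$, which one can verify either by a trace computation at a point where $\theta_{X}$ is diagonalized or by a local coordinate calculation in the normal coordinates established in the previous subsection.
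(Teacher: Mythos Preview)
Your proof is correct and follows essentially the same route as the paper: both compute the volume form $\omega_{0}^{n}$ via the identity $\sqrt{-1}\partial t \wedge \overline{\partial}t \wedge \theta_{X}^{n-1} = \frac{1}{n}\|\partial t\|_{\theta_{X}}^{2}\theta_{X}^{n}$ and then take $-\dol\log$. Your explicit mention of the rank-one structure of $\sqrt{-1}\partial t \wedge \overline{\partial}t$ to collapse the binomial expansion is a helpful detail that the paper leaves implicit, but the argument is otherwise identical.
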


\begin{proof}
To prove this lemma, it is enough to see the volume form of $\omega_{0}$.\
From the definition of $\omega_{0}$, we have
$$
\omega_{0} = \exp \left(\frac{\hat{S}_{D}}{n(n-1)}t\right) \left( \theta_{X} + \frac{\hat{S}_{D}}{n(n-1)} \sqrt{-1} \partial t \wedge \overline{\partial} t \right).
$$
So, the following identity
$$
\sqrt{-1} \partial t \wedge \overline{\partial} t \wedge \theta_{X}^{n-1} = \frac{1}{n}  || \partial t ||^{2}_{\theta_{X}} \theta_{X}^{n}
$$
implies that the volume form of $\omega_{0}$ is given by
$$
\omega_{0}^{n} = \exp \left(\frac{\hat{S}_{D}}{n-1}t\right) \left( 1 + \frac{\hat{S}_{D}}{n(n-1)} || \partial t ||^{2}_{\theta_{X}} \right) \theta_{X}^{n}.
$$
Recall that the Ricci form is given by ${\rm Ric}(\omega_{0}) = - \dol \log \omega_{0}^{n}$.\
Thus, the lemma follows.
\end{proof}

Thus, we easily have $S(\omega_{0}) = O(||\sigma_{D}||^{2\hat{S}_{D}/n(n-1)})$ as $\sigma_{D} \to 0$.\

Firstly, we show the following proposition to prove Theorem \ref{scalar curvature decay}.

\begin{prop}
\label{1 decay}
If $\theta_D$ is a cscK metric, we have
$$
S(\omega_0) = O(|| \sigma_D ||^{1 + 2\hat{S}_{D}/n(n-1)})
$$
as $\sigma_D \to 0$.
\end{prop}
\begin{proof}
To prove this, we compute the Ricci form of $\theta_{X}$.\
Write
\begin{eqnarray*}
\theta_{X} = \left[ 
\begin{array}{cccc}
g_{1,\overline{1}} & \cdots & g_{1,\overline{n-1}} & g_{1,\overline{w}} \\
\vdots & \ddots & \vdots & \vdots \\
g_{n-1,\overline{1}} & \cdots & g_{n-1,\overline{n-1}} & g_{n-1,\overline{w}}\\
g_{w,\overline{1}} & \cdots & g_{w,\overline{n-1}} & g_{w,\overline{w}}
\end{array} 
\right]
=
\left[ 
\begin{array}{cc}
B & R \\
\overline{R}^{t} & W \\
\end{array} 
\right]
\end{eqnarray*}
in the previous local holomorphic coordinates.\
Since Lemma \ref{determinant} implies that $\det \theta_{X} = \det B \det ( W - \overline{R}^{t} B^{-1} R )$, we have
\begin{eqnarray*}
{\rm Ric}(\theta_{X})
&=& -\dol \log \det B -\dol \log ( W - \overline{R}^{t} B^{-1} R ).
\end{eqnarray*}
Recall the notation $(z^{1},...,z^{n-1},w) = (z;w)$.\
Consider the expansion at $w=0$ ;
\begin{equation}
\label{det order}
\det B (z;w) = \det B(z;0) + w \frac{\partial \det B}{\partial w} + \overline{w} \frac{\partial \det B}{\partial \overline{w}} + O(|w|^{2}).
\end{equation}

Recall that
\[
{\rm Ric} (\theta_{D}) = - \sqrt{-1} \sum_{i,j = 1}^{n-1} \frac{\partial^{2}\log \det B(z;0) }{\partial z^{i} \partial \overline{z}^{j} } dz^{i} \wedge d\overline{z}^{j},
\]
and $S(\theta_{D}) = {\rm tr}_{\theta_{D}} {\rm Ric}(\theta_{D} ) = \hat{S}_{D}$.\
By (\ref{det order}),
\begin{eqnarray*}
{\rm Ric}(\theta_{X})
&=&{\rm Ric} (\theta_{D}) + O(|w|) dz \wedge d\overline{z} -\dol \log ( W - \overline{R}^{t} B^{-1} R )\\
&+&O(1) dw \wedge d\overline{z} + O(1) dz \wedge d\overline{w} + O(1) dw \wedge d\overline{w}
\end{eqnarray*}
at $ (0;w)$.\
Here $dz$ denote differential 1-forms in directions of $D$.\
To prove Proposition \ref{1 decay}, it is clearly enough to take the trace with respect to the metric
\[
\theta_{X} + \frac{\hat{S}_{D}}{n(n-1)} \sqrt{-1} \partial t \wedge \overline{\partial}t.
\]
For simplicity, set $a := \hat{S}_{D}/n(n-1) > 0$.\
Since $\partial t = \partial \varphi - dw/w$, the metric $\theta_{X} + a \sqrt{-1} \partial t \wedge \overline{\partial}t$ can be written as
\begin{eqnarray*}
\left[ 
\begin{array}{cccc}
g_{1,\overline{1}} + a \varphi_{1} \varphi_{\overline{1}}  & \cdots & g_{1,\overline{n-1}} + a \varphi_{1} \varphi_{\overline{n-1}}& g_{1,\overline{w}} + a \varphi_{1} (\varphi_{\overline{w}} - 1/\overline{w} ) \\
\vdots & \ddots & \vdots & \vdots \\
g_{n-1,\overline{1}} + a \varphi_{n-1} \varphi_{\overline{1}} & \cdots & g_{n-1,\overline{n-1}} + a \varphi_{n-1} \varphi_{\overline{n-1}}& g_{n-1,\overline{w}} + a \varphi_{n-1} (\varphi_{\overline{w}} - 1/\overline{w} )\\
g_{w,\overline{1}} + a (\varphi_{w} - 1/w) \varphi_{\overline{1}} & \cdots & g_{w,\overline{n-1}} + a (\varphi_{w} - 1/w) \varphi_{\overline{n-1}} & g_{w,\overline{w}} + a(\varphi_{w} - 1/w )(\varphi_{\overline{w}} - 1/\overline{w} )
\end{array} 
\right],
\end{eqnarray*}
where $\varphi_{i}$ denotes $\partial \varphi / \partial z^{i}$.\
For simplicity, write the matrix above as
\[
\theta_{X} + a \sqrt{-1} \partial t \wedge \overline{\partial}t =
\left[
\begin{array}{cc}
E & F\\
G & H
\end{array}
\right].
\]
In order to take the trace of ${\rm Ric}(\omega_{0})$ with respect to the metric $\theta_{X} + a \sqrt{-1} \partial t \wedge \overline{\partial}t$, we compute the inverse matrix of this.\
Since we only consider $S(\omega_{0})$ near $D$, $H = O(|w|^{-2})$ as $w \to 0$.\ 
By Lemma \ref{inverse formula}, we have
\begin{eqnarray*}
\left[ 
\begin{array}{cc}
E^{-1} + E^{-1} F S^{-1} G E^{-1} & - E^{-1} F S^{-1} \\
- S^{-1} G E^{-1} & S^{-1} \\
\end{array} 
\right],
\end{eqnarray*}
where $S := H - G E^{-1} F$.\
Since $S = O(|w|^{-2})$ as $w \to 0$, we get
$$
E^{-1} F S^{-1} G E^{-1},E^{-1} F S^{-1}, S^{-1} G E^{-1},S^{-1} = O(|w|^{2}).
$$
Thus, to compute the scalar curvature $S(\omega_{0})$, it is enough to study the block $E^{-1} + E^{-1} F S^{-1} G E^{-1}$.\
In this case, by considering the expansion at $w=0$, we can write
\[
E = B(0;0) + J,
\]
where $J=O(|w|^{2})$.\
So we have
\begin{eqnarray*}
E^{-1}
&=& (B(0;0)+J)^{-1}\\
&=& B(0;0)^{-1} (I +J B(0;0)^{-1})^{-1}\\
&=& B(0;0)^{-1} (I + \sum_{i>0} (- J B(0;0)^{-1})^{i})\\
&=& B(0;0)^{-1} + O(|w|^{2}).
\end{eqnarray*}
Consider the term
\[
- \dol \log \left( 1 + \frac{\hat{S}_{D}}{n(n-1)} || \partial t ||_{\theta_{X}}^{2} \right),
\]
where
\begin{eqnarray*}
|| \partial t ||_{\theta_{X}}^{2}
&=& \sum_{i,j}^{n-1} g^{i,\overline{j}} \varphi_{i} \varphi_{\overline{j}} + \sum_{a=1}^{n-1}  \left( g^{a,\overline{w}} \varphi_{a} ( \varphi_{\overline{w}} - 1/\overline{w} ) + g^{w,\overline{a}} ( \varphi_{w} - 1/w ) \varphi_{\overline{a}} \right)\\
&\hspace{30pt} +& g^{w,\overline{w}} ( \varphi_{w} - 1/w ) ( \varphi_{\overline{w}} - 1/\overline{w} ).
\end{eqnarray*}
Note that $ g^{w,\overline{w}} = ( W - \overline{R}^{t} B^{-1} R )^{-1} $.\
Thus, we have
\begin{eqnarray*}
&&-\dol \log ( W - \overline{R}^{t} B^{-1} R ) - \dol \log \left( 1 + \frac{\hat{S}_{D}}{n(n-1)} || \partial t ||_{\theta_{X}}^{2} \right)\\
&=& -\dol \log ( 1 + O(|w|^{2}) ).
\end{eqnarray*}
Thus,
\begin{eqnarray*}
|| \sigma_{D} ||^{-2a} S(\omega_{0})
&=& {\rm tr}_{\theta_{X} + a \sqrt{-1} \partial t \wedge \overline{\partial}t } {\rm Ric}(\omega_{0} )\\
&=& {\rm tr}_{\theta_{X} + a \sqrt{-1} \partial t \wedge \overline{\partial}t } \left( {\rm Ric} (\theta_{D}) - \frac{\hat{S}_{D}}{(n-1)} \theta_{D} \right) + O(|w|)
\end{eqnarray*}
as $w \to 0$.\
Therefore, Proposition \ref{1 decay} is proved.
\ 
\end{proof}

\begin{rem}
Roughly, we have proved that
\[
S(\omega_{0}) = C ||\sigma_{D}||^{\frac{2 \hat{S}_{D}}{n(n-1)}} (S(\theta_{D}) - \hat{S}_{D} + O(||\sigma_{D}||))
\]
near $D$.\
Thus, in fact, $\theta_{D}$ is cscK if and only if $S(\omega_{0})$ has a zero along $D$ of order $1 + 2\hat{S}_{D}/n(n-1)$ in our construction.\ 
\end{rem}

Secondly, we prove Theorem \ref{scalar curvature decay} by altering the Hermitian metric $h_X$.
By following Bando-Kobayashi \cite{BK}, we take a smooth function $a \in C^{\infty} (X, \mathbb{R})$ such that $a |_D \equiv 0$.
Define a Hermitian metric on $L_X$ by
$$
h_{X,a} := e^{-a} h_X.
$$
Note that this modification does not change the Hermitian metric $h_D$ on $L_D$.
For this Hermitian metric $h_{X,a}$, we write
$$
|| \sigma_D ||^{2}_a := || \sigma_D ||^{2}_{h_{X,a}} = e^{-a} || \sigma_D ||^{2}_{h_{X}}, \hspace{7pt} t_a = \log || \sigma_D ||^{-2}_a.
$$
In addition, we define the {\kah} metrics by
$$
\theta_{X,a} := \dol t_a, \hspace{7pt} \omega_a := \frac{n(n-1)}{\hat{S}_{D}} \dol \exp \left( \frac{\hat{S}_{D}}{n(n-1)} t_a \right).
$$
We consider the following function :
$$
|| \sigma_D ||^{-2 \hat{S}_{D}/n(n-1)}_a S(\omega_a).
$$

Take a point $p \in D$ and a local holomorphic coordinate chart centered at $p$ such that $D = \{ w = 0 \}$.
In order to prove Theorem \ref{scalar curvature decay}, it is enough to show the following proposition :
\begin{prop}
We can find a smooth function $a$ on $X$ such that $a|_D \equiv 0$ and
$$
\frac{\partial}{\partial w } \left( || \sigma_D ||^{-2 \hat{S}_{D}/n(n-1)}_a S(\omega_a) \right) = 0
$$
at any point $p$.
\end{prop}
\begin{proof}
From Proposition \ref{1 decay}, we can find functions $F_0$ and $F_a$ such that
$$
\Delta_{\omega_0} F_0 = S(\omega_0), \hspace{7pt} \Delta_{\omega_a} F_a = S(\omega_a)
$$
and $F_0$ and $F_a$ decay near $D$ by following \cite{BK} (see also Lemma \ref{isomorphic Laplas} in this paper).
By using these functions $F_0 , F_a$, we have
\begin{eqnarray*}
\Delta_{\omega_a} F_a
&=&S(\omega_a) \\
&=& {\rm tr}_{\omega_a} {\rm Ric }(\omega_a)\\
&=& {\rm tr}_{\omega_a} ( {\rm Ric }(\omega_a) -  {\rm Ric }(\omega_0) ) + ( {\rm tr}_{\omega_a} -  {\rm tr}_{\omega_0})  {\rm Ric }(\omega_0) + \Delta_{\omega_0} F_0\\
&=& - {\rm tr}_{\omega_a} \dol \log \left( \frac{ \omega_a^n }{\omega_0^n} \right) + ( {\rm tr}_{\omega_a} -  {\rm tr}_{\omega_0})  {\rm Ric }(\omega_0) + \Delta_{\omega_0} F_0.
\end{eqnarray*}
At $p \in D$, we have the following equation by the direct computation :
\begin{eqnarray*}
\Delta_{\theta_D} \frac{\partial F_a}{\partial w}
&=&\frac{\partial}{\partial w } \left( || \sigma_D ||^{-2 \hat{S}_{D}/n(n-1)}_a S(\omega_a) \right) \\
&=& - \Delta^2_{\theta_D} \frac{\partial a}{\partial w} - \left( \frac{\hat{S}_{D}}{n-1} -1 \right) \Delta_{\theta_D} \frac{\partial a}{\partial w}\\
&& \hspace{20pt}  - \left(\dol \frac{\partial a}{\partial w}, \hspace{3pt} {\rm Ric} \theta_D - \frac{\hat{S}_{D}}{n-1} \theta_D \right)_{\theta_D} + \Delta_{\theta_D} \frac{\partial F_0}{\partial w}.
\end{eqnarray*}
Recall that the linearization of the scalar curvature operator satisfies
$$
L_{\theta_D} \varphi = - \Delta^2_{\theta_D} \varphi - \left(\dol \varphi, \hspace{3pt} {\rm Ric} \theta_D \right)_{\theta_D} = - \mathcal{D}^{*}_{\theta_D}\mathcal{D}_{\theta_D} \varphi + (\nabla^{1,0} \varphi, \nabla^{0,1} S(\theta_D))_{\theta_D}
$$
for $\varphi \in C^\infty (D)$ (see \cite{Sz1}).
So, we have
\begin{eqnarray*}
\frac{\partial}{\partial w } \left( || \sigma_D ||^{-2 \hat{S}_{D}/n(n-1)}_a S(\omega_a) \right)
&=& L_{\theta_D} \frac{\partial a}{\partial w} + \Delta_{\theta_D} \frac{\partial a}{\partial w} + \Delta_{\theta_D} \frac{\partial F_0}{\partial w}\\
&=& - \mathcal{D}^{*}_{\theta_D}\mathcal{D}_{\theta_D} \frac{\partial a}{\partial w}  + \Delta_{\theta_D} \frac{\partial a}{\partial w} + \Delta_{\theta_D} \frac{\partial F_0}{\partial w}.
\end{eqnarray*}
Here, we have used the fact that the {\kah} metric $\theta_D$ is a cscK metric on $D$.
Recall that the Laplacian with respect to $\theta_D$ is given by $\Delta_{\theta_D} = g^{i,\overline{j}} \partial_i \partial_{\overline{j}}$.
Thus, by the Hodge theory, we can solve the following differential equation :
\begin{equation}
\label{our modification}
- \mathcal{D}^{*}_{\theta_D}\mathcal{D}_{\theta_D} \frac{\partial a}{\partial w}  + \Delta_{\theta_D} \frac{\partial a}{\partial w} + \Delta_{\theta_D} \frac{\partial F_0}{\partial w} = \Delta_{\theta_D} \frac{\partial F_a}{\partial w} = 0.
\end{equation}
Note that the operator $- \mathcal{D}^{*}_{\theta_D}\mathcal{D}_{\theta_D} + \Delta_{\theta_D} $ is negative and self-adjoint.
By the same way in \cite[p,176]{BK}, we can show the existence of a smooth function $a \in C^\infty (X, \mathbb{R})$ such that $a|_D \equiv 0$ and $\theta_{X,a} = \theta_X + \dol a > 0$ on $X$.
\ 
\end{proof}

\begin{rem}
In \cite[p,176]{BK}, if $\theta_{D}$ is a Ricci-positive {\kah}-Einstein metric, the background {\kah} metric $\omega_{0}$ can be chosen so that the Ricci potential of $\omega_{0}$ decays at a higher order by altering the Hermitian metric $h_{X}$ on $K_{X}^{-1/\alpha}$.\
In order to find the Hermitian metric above, they solved the following differential equation :
\begin{equation}
\label{BK modification}
\Delta_{\theta_D} \frac{\partial a}{\partial w} + (\alpha - 2) \frac{\partial a}{\partial w} + \frac{\partial F_0}{\partial w} = 0.
\end{equation}
Here, we have used the notations in this article.
In the case of Bando-Kobayashi \cite{BK}, the {\kah} metric $\theta_D$ is a {\kah} Einstein metric, i.e., ${\rm Ric } (\theta_D) = (\alpha - 1) \theta_D$, so this equation (\ref{BK modification}) is equivalent to the equation (\ref{our modification}) by considering the image of the operator $- \Delta_{\theta_D}$ of (\ref{BK modification}).
Therefore, our modification of the Hermitian metric $h_X$ can be considered as a generalization of the modification in Bando-Kobayashi \cite{BK}.
\end{rem}

From now on, let us write the modified Hermitian metric $h_{X,a}$ as $h_X$ for simplicity.
So, we use the simple symbols $t, \theta_X, \omega_0$ from now on.
Thus, we have $S(\omega_0) = O(|| \sigma_D ||^{2 + 2\hat{S}_{D}/n(n-1)})$ on the assumption that $\theta_D$ is cscK.

\section{
Asymptotically conical geometry}
\label{sec:4}

Recall that the {\kah} metric defined by
\begin{eqnarray*}
\omega_{0}
&=& \frac{n(n-1)}{\hat{S}_{D}}\dol \exp \left(\frac{\hat{S}_{D}}{n(n-1)}t\right)\\
&=& \exp \left(\frac{\hat{S}_{D}}{n(n-1)}t\right) \left( \theta_{X} + \frac{\hat{S}_{D}}{n(n-1)} \sqrt{-1} \partial t \wedge \overline{\partial} t \right)
\end{eqnarray*}
is complete on $X \setminus D$.\
Set $r(x) := d(x,x_{0})$, where $d$ is the distance function from some fixed point $x_{0} \in X\setminus D$ defined by $\omega_{0}$.\
Following \cite{BK}, the Riemannian manifold $(X\setminus D , \omega_{0})$ is of asymptotically conical geometry which is the analytic framework in this paper.\

\begin{definition}
A complete Riemannian metric $g$ on an open manifold $M$ of dimension $m$ is said to be of {\it $C^{k,\alpha}$-asymptotically conical geometry} if for each point $p \in M$ with distance $r$ from a fixed point $o \in M$, there exists a harmonic coordinate system $x = (x^{1},x^{2},\cdot\cdot\cdot,x^{m})$ centered at $p$ which satisfies the following conditions:
\begin{itemize}
\item The coordinate $x$ runs over a unit ball $B_{p}^{m} \subset \mathbb{R}^{m}$.
\item If we write $g = \sum g_{i,j}(x) dx^{i} dx^{j} $, then the matrix $(r^{2} + 1)^{-1} g_{i,j}(x)$ is bounded from below by a constant positive matrix independent of $p$.
\item The $C^{k,\alpha}$-norms of $(r^{2} + 1)^{-1} g_{i,j}(x)$ are uniformly bounded.
\end{itemize}
In particular, we simply say that $(M,g)$ is of {\it asymptotically conical geometry} if $(M,g)$ is of $C^{k,\alpha}$-asymptotically conical geometry for any $k \in \mathbb{Z}_{\geq 0}$ and $\alpha \in (0,1)$.\
\end{definition}

\begin{definition}
\label{weight norm}
Assume that a Riemannian manifold $(M,g)$ is of asymptotically conical geometry.\
The $C^{k,\alpha}$-norm of a function $u$ of weight $\delta \in \mathbb{R}$ is defined by $$||u||_{C^{k,\alpha}_{\delta}} := \sup_{p \in M} (r(p)^{2} + 1)^{\delta/2} || u ||_{C^{k,\alpha}(B_{p})}.$$
The Banach space $C^{k,\alpha}_{\delta}$ is defined by the set of functions $u$ such that $||u||_{C^{k,\alpha}_{\delta}} < \infty$.\
In the above definition, we use the coordinates $x \in B_{p}^{m}$ centered at $p$ with $d(o,p) = r$ in the definition of the asymptotically conicalness.\
\end{definition}

\section{
Forth order elliptic linear operators}
\label{sec:5}

To prove Theorem \ref{complete scalar-flat}, we study the linearization of the scalar curvature operator.\
For a smooth function $\varphi$ on $X \setminus D$, set $\omega_{t} := \omega_{0} + t \dol \varphi$.\
Recall that $S(\omega_{t}) = g^{i,\overline{j}}_{t} R_{t,i,\overline{j}}$.\
Thus, the linearization of the scalar curvature operator is defined by 
\begin{eqnarray*}
L_{\omega_{0}} (\phi)
&:=& \left. \frac{d}{dt} \right|_{t=0} S(\omega_{t})\\
&=& - \Delta_{\omega_{0}}^{2} \varphi - g^{i,\overline{q}} \varphi_{p,\overline{q}} g^{p,\overline{j}} R_{j,\overline{i}} \\
&=& - \Delta_{\omega_{0}}^{2} \varphi - R^{i,\overline{j}} \varphi_{i,\overline{j}}.
\end{eqnarray*}

Set $M := X \setminus D$.\
The following operator plays an important role in this article.\
\begin{definition}
The operator $\mathcal{D}_{\omega_{0}}$ is defined by
\begin{eqnarray*}
\mathcal{D}_{\omega_{0}} : C_{\delta}^{k,\alpha}(M,\mathbb{C}) &\to& C_{\delta + 2}^{k-2,\alpha}(M,\Omega^{0,1}M \otimes T^{1,0}M)\\
\varphi &\mapsto& \overline{\partial} (\nabla^{1,0} \varphi)
\end{eqnarray*}
\end{definition}
Here $\overline{\partial}$ is the (0,1)-part of the Levi-Civita connection and $\nabla^{1,0}$ is the (1,0)-gradient with respect to $\omega_{0}$.\
We call $\mathcal{D}^{*}_{\omega_{0}}\mathcal{D}_{\omega_{0}}$ the Lichnerowicz operator and we have

\begin{lemma}
The Lichnerowicz operator $\mathcal{D}^{*}_{\omega_{0}}\mathcal{D}_{\omega_{0}}$ satisfies
\begin{eqnarray}
\mathcal{D}^{*}_{\omega_{0}}\mathcal{D}_{\omega_{0}} \varphi = \Delta_{\omega_{0}}^{2} \varphi + R^{i,\overline{j}} \varphi_{i,\overline{j}} + (\nabla^{1,0} \varphi, \nabla^{0,1} S(\omega_{0}))_{\omega_{0}}.
\end{eqnarray}
\end{lemma}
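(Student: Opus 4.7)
The plan is to derive this Weitzenbock-type identity by a direct computation in local holomorphic coordinates, using only the Kähler condition and the contracted second Bianchi identity. Since the expression for $L_{\omega_{0}}$ already given just above the lemma reads
\[
L_{\omega_{0}}\varphi = -\Delta_{\omega_{0}}^{2}\varphi - R^{i,\overline{j}}\varphi_{i,\overline{j}},
\]
the content of the lemma is equivalent to the statement
\[
\mathcal{D}^{*}_{\omega_{0}}\mathcal{D}_{\omega_{0}}\varphi + L_{\omega_{0}}\varphi = (\nabla^{1,0}\varphi,\nabla^{0,1}S(\omega_{0}))_{\omega_{0}},
\]
which is the classical formula identifying the difference between the Lichnerowicz operator and (minus) the linearization of the scalar curvature. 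I would establish it in this form.

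First I would unwind $\mathcal{D}_{\omega_{0}}\varphi$. Because $\nabla^{1,0}\varphi = g^{i\overline{j}}\varphi_{\overline{j}}\,\partial_{i}$ is a $(1,0)$-vector field and because the Chern connection of a Kähler metric has no $\overline{\partial}$-component on $T^{1,0}$, the components are simply
\[
(\mathcal{D}_{\omega_{0}}\varphi)^{i}_{\ \overline{k}} = g^{i\overline{j}}\nabla_{\overline{k}}\nabla_{\overline{j}}\varphi = g^{i\overline{j}}\varphi_{\overline{j}\overline{k}}.
\]
Second, I would read off $\mathcal{D}^{*}_{\omega_{0}}$ by integration by parts in the $L^{2}$-pairing $\langle \mathcal{D}_{\omega_{0}}\varphi,\mathcal{D}_{\omega_{0}}\psi\rangle$ against a test function $\psi$. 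This produces, after two integrations by parts and lowering/raising indices, an expression of the form
\[
\int_{M}\nabla^{i}\nabla^{\overline{j}}\bigl(\varphi_{i\overline{j}}\bigr)\overline{\psi}\,\omega_{0}^{n}/n!,
\]
so that at the pointwise level $\mathcal{D}^{*}_{\omega_{0}}\mathcal{D}_{\omega_{0}}\varphi$ is a quadruple covariant derivative of $\varphi$ contracted with metric coefficients.

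Third, I would rearrange this fourth-order expression into the form $\Delta_{\omega_{0}}^{2}\varphi + \text{lower-order}$ by commuting the covariant derivatives. On a Kähler manifold one has $\nabla^{i}\nabla^{\overline{j}}\varphi_{i\overline{j}} - \Delta_{\omega_{0}}^{2}\varphi = R^{i\overline{j}}\varphi_{i\overline{j}} + (\text{divergence of Ricci})\cdot\nabla\varphi$; each commutator of a $\nabla_{i}$ past a $\nabla_{\overline{j}}$ produces a Riemann-tensor term which, once contracted, collapses into the Ricci tensor $R^{i\overline{j}}$ or its divergence. Finally, applying the Kähler contracted Bianchi identity $\nabla^{i}R_{i\overline{j}} = \nabla_{\overline{j}}S(\omega_{0})$ converts the divergence-of-Ricci term into exactly $(\nabla^{1,0}\varphi,\nabla^{0,1}S(\omega_{0}))_{\omega_{0}}$, which is the term distinguishing $\mathcal{D}^{*}\mathcal{D}$ from $-L_{\omega_{0}}$.

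The main obstacle is purely bookkeeping: keeping track of conjugation conventions and of which covariant derivatives commute freely (the ones in the same $(p,q)$-type) versus which pick up curvature (mixed pairs). The only non-mechanical input is the Bianchi identity at the end; everything else is algebra in local normal coordinates. It is worth noting that on a non-compact manifold such as $X\setminus D$ the adjoint computation is a priori only formal, but since the identity is pointwise and both sides are local differential expressions, formal integration by parts is justified and the identity holds everywhere on $X\setminus D$.
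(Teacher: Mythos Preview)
The paper does not actually prove this lemma; it is stated without proof as a standard identity and then immediately used to rewrite $L_{\omega_{0}}$. Your proposal supplies exactly the standard derivation (compute $\mathcal{D}_{\omega_{0}}\varphi$ in coordinates, take the formal adjoint, commute covariant derivatives, and invoke the contracted Bianchi identity $\nabla^{i}R_{i\overline{j}} = \nabla_{\overline{j}}S$), which is the argument one finds for instance in the reference \cite{Sz1} that the paper cites. Your outline is correct; the only cosmetic point is that the intermediate expression you write as $\nabla^{i}\nabla^{\overline{j}}(\varphi_{i\overline{j}})$ should more precisely read $g^{k\overline{l}}g^{i\overline{j}}\nabla_{k}\nabla_{i}\nabla_{\overline{l}}\nabla_{\overline{j}}\varphi$ before any commutation, but this does not affect the argument.
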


Thus, we have
$$
L_{\omega_{0}} = - \mathcal{D}^{*}_{\omega_{0}}\mathcal{D}_{\omega_{0}} + (\nabla^{1,0} \ast, \nabla^{0,1} S(\omega_{0}))_{\omega_{0}}.
$$

The idea of proving Theorem \ref{complete scalar-flat} follows from Arezzo-Pacard \cite{AP1} and \cite{AP2} (see also \cite{Sz1}).\
Consider the following expansion :
$$
S(\omega_{0} + \dol \phi) = S(\omega_{0}) + L_{\omega_{0}}(\phi) + Q_{\omega_{0}}(\phi).
$$
To solve the following equation ;
\[
S(\omega_{0} + \dol \phi) = 0,
\]
we will find a following fixed point :
\[
\phi = - L_{\omega_{0}}^{-1}(S(\omega_{0}) + Q_{\omega_{0}}(\phi)).
\]
When we prove Theorem \ref{complete scalar-flat}, we assume that $L_{\omega_{0}}$ is invertible.\
Therefore we need to prove that the operator
\begin{equation}
\label{operator N}
\mathcal{N}(\phi) := - L_{\omega_{0}}^{-1}(S(\omega_{0}) + Q_{\omega_{0}}(\phi))
\end{equation}
is a contraction on some Banach space.\

In particular, we mainly use the weighted Banach spaces $C^{4,\alpha}_{\delta - 4}(X\setminus D)$ and $C^{0,\alpha}_{\delta}(X\setminus D)$.\
From the definition of the weighted Banach space and local formulae of these operators, we easily have
\begin{lemma}
Following three operators
\[
L_{\omega_{0}} ,\hspace{4pt} \mathcal{D}_{\omega_{0}}^{*}\mathcal{D}_{\omega_{0}} ,\hspace{4pt} \Delta_{\omega_{0}}^{2} :C^{4,\alpha}_{\delta - 4}(X \setminus D) \to C^{0,\alpha}_{\delta}(X \setminus D)
\]
are bounded.\
\end{lemma}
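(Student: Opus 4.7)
The plan is to reduce the global weighted bound to a uniform ball-by-ball estimate in the harmonic charts $B_p^m$ guaranteed by the asymptotically conical geometry of $(X\setminus D,\omega_0)$. The defining property that $(r^2+1)^{-1}g_{ij}$ is uniformly $C^{k,\alpha}(B_p)$-bounded and positive-definite, together with the chain and product rules, immediately gives that $g^{i\bar j}$ is $C^{k,\alpha}(B_p)$-bounded by a constant multiple of $(r(p)^2+1)^{-1}$; the Christoffel symbols, Riemann tensor, Ricci tensor, and their derivatives then inherit corresponding weighted bounds, and $S(\omega_0)$ lies in $C^{k,\alpha}_{\delta'}$ for some $\delta'>2$ by Theorem \ref{scalar curvature decay}. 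I would collect these preliminary weight estimates at the outset.

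With those in hand, each of the three operators reduces to a sum of products of a coefficient carrying weight $(r^2+1)^{-2}$ with a derivative of $\phi$ of order at most four. Concretely, for $\Delta^2_{\omega_0}$ one has $\Delta_{\omega_0}\phi = g^{i\bar j}\phi_{i\bar j}$ for functions (no Christoffel correction), so $\Delta^2_{\omega_0}\phi$ expands as a linear combination of products involving $g^{i\bar j}$'s, their derivatives up to order two, and derivatives of $\phi$ up to order four. The weighted bounds on the coefficients yield
$$
\|\Delta^2_{\omega_0}\phi\|_{C^{0,\alpha}(B_p)} \leq C(r(p)^2+1)^{-2}\|\phi\|_{C^{4,\alpha}(B_p)},
$$
and substituting $\|\phi\|_{C^{4,\alpha}(B_p)} \leq (r(p)^2+1)^{(4-\delta)/2}\|\phi\|_{C^{4,\alpha}_{\delta-4}}$ and multiplying by the weight $(r(p)^2+1)^{\delta/2}$ gives the desired bound on $\|\Delta^2_{\omega_0}\phi\|_{C^{0,\alpha}_\delta}$. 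The terms $R^{i\bar j}\phi_{i\bar j}$ and $(\nabla^{1,0}\phi,\nabla^{0,1}S(\omega_0))_{\omega_0}$ in $\mathcal{D}^*_{\omega_0}\mathcal{D}_{\omega_0}$ are handled the same way: the coefficient $R^{i\bar j}=g^{i\bar k}g^{\ell\bar j}R_{\ell\bar k}$ carries weight $(r^2+1)^{-2}$ and pairs with a second derivative of $\phi$, while the gradient term pairs a first derivative of $\phi$ against $g^{i\bar j}\partial_{\bar j}S(\omega_0)$, which inherits its weight from the decay of $S(\omega_0)$. Finally, $L_{\omega_0} = -\mathcal{D}^*_{\omega_0}\mathcal{D}_{\omega_0} + (\nabla^{1,0}\ast,\nabla^{0,1}S(\omega_0))_{\omega_0}$ is then a sum of operators already shown to be bounded.

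The only step that requires any care is the verification that the curvature quantities ($\Gamma^k_{ij}$, $R_{i\bar j k\bar l}$, $R_{i\bar j}$, $S(\omega_0)$ and their derivatives) really do satisfy the claimed weighted bounds in the rescaled harmonic charts. This is a direct consequence of the asymptotically conical definition combined with Theorem \ref{scalar curvature decay}, so no serious analytic input is needed; the lemma is in essence a bookkeeping statement.
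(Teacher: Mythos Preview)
Your argument is correct and is precisely the detailed unpacking of what the paper states in a single line (``From the definition of the weighted Banach space and local formulae of these operators, we easily have''). One minor point: the lemma itself does not assume $\theta_D$ is cscK, so you need not invoke Theorem~\ref{scalar curvature decay}; the general decay $S(\omega_0)=O(r^{-2})$ coming from the asymptotically conical geometry already gives $\delta'\ge 2$, which is all the gradient term requires.
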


First, we study the square of the Laplacian operator $\Delta_{\omega_{0}}^{2}$.\
Define a barrier function $\rho$ on $X \setminus D$ by
\[
\rho := \exp \left(\frac{\hat{S}_{D}}{2n(n-1)}t\right) = ||\sigma_{D}||^{-\hat{S}_{D}/n(n-1)}.
\]
Note that for $\delta > 0$, $\rho$ satisfies
\begin{eqnarray*}
\Delta_{\omega_{0}} \rho^{-\delta} 
&=& {\rm tr}_{\omega_{0}} \dol \exp \left(\frac{ - \delta \hat{S}_{D}}{2n(n-1)}t\right)\\
&=& \frac{ - \delta \hat{S}_{D}}{2n(n-1)} {\rm tr}_{\omega_{0}} \left( \exp \left(\frac{ - \delta \hat{S}_{D}}{2n(n-1)}t\right) \left( \dol t + \frac{ - \delta \hat{S}_{D}}{2n(n-1)} \sqrt{-1} \partial t \wedge \overline{\partial} t \right) \right)\\
&=& \frac{ - \delta \hat{S}_{D}}{2n(n-1)} \rho^{-\delta - 2} {\rm tr}_{\omega_{0}} \left( \exp \left(\frac{\hat{S}_{D}}{n(n-1)}t\right) \left( \dol t + \frac{ - \delta \hat{S}_{D}}{2n(n-1)} \sqrt{-1} \partial t \wedge \overline{\partial} t \right) \right)\\
&=& \frac{ - \delta \hat{S}_{D}}{2n(n-1)} \rho^{-\delta - 2} {\rm tr}_{\omega_{0}} \left(\omega_{0} + \frac{ - (\delta + 2 )\hat{S}_{D}}{2n(n-1)} \exp \left(\frac{\hat{S}_{D}}{n(n-1)}t\right) \sqrt{-1} \partial t \wedge \overline{\partial} t \right)\\
&=& \frac{ - \delta \hat{S}_{D}}{2n(n-1)} \rho^{-\delta - 2} \left( n + {\rm tr}_{\omega_{0}} \left( \frac{ - (\delta + 2 )\hat{S}_{D}}{2n(n-1)} \exp \left(\frac{\hat{S}_{D}}{n(n-1)}t\right) \sqrt{-1} \partial t \wedge \overline{\partial} t \right) \right)\\
&\leq& \frac{ - \delta \hat{S}_{D}}{2n(n-1)} \rho^{-\delta - 2} \left( n - \frac{\delta + 2}{2} \right)\\
\end{eqnarray*}
Here we have used the following inequality :
\[
\omega_{0} \geq \frac{\hat{S}_{D}}{n(n-1)} \exp \left(\frac{\hat{S}_{D}}{n(n-1)}t\right) \sqrt{-1} \partial t \wedge \overline{\partial} t.
\]
From Lemma \ref{volume growth 2}, we have known that the volume growth of $\omega_{0}$ is given by
\[
{\rm Vol }_{\omega_{0}} (B(x_{0},r)) = O(r^{2n}).
\]
In addition, we have known that $|| {\rm Ric}(\omega_{0}) ||_{\omega_{0}} = O(r^{-2})$ as $r \to \infty$.\
From \cite[Theorem 1.2]{He}, we have
\begin{lemma}
Set $\gamma := n/(n-1)$.\
Then the following Sobolev inequality holds, i.e., there exists a constant $C>0$ such that
\[
\left( \int_{X \setminus D} |v|^{2\gamma} \omega_{0}^{n} \right)^{1/\gamma} \leq C \int_{X \setminus D} |\partial v|^{2} \omega_{0}^{n}
\]
for any compactly supported smooth function $v$ on $X \setminus D$.\
\end{lemma}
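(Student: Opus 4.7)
The plan is to reduce the claimed inequality to Hebey's general Sobolev embedding theorem \cite[Theorem 1.2]{He}. The key preliminary observation is that $X \setminus D$ has real dimension $m = 2n$, so the critical Sobolev exponent in real dimension $m$ is $2^{*} = 2m/(m-2) = 2n/(n-1) = 2\gamma$, which matches exactly the exponent $2\gamma$ appearing in the statement. Thus the inequality we need is the standard critical Sobolev inequality on a complete noncompact Riemannian manifold, and no exponent miracle is required.

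First I would verify that $(X \setminus D, \omega_{0})$ satisfies the geometric hypotheses needed by Hebey's theorem: completeness, a uniform lower bound on the injectivity radius, and an appropriate bound on Ricci curvature. Completeness of $\omega_{0}$ is already established. The asymptotically conical geometry from Section 4 gives uniformly bounded geometry on scale-one harmonic balls around every point, which yields a uniform positive lower bound on the injectivity radius along with uniformly bounded Riemann curvature on each unit ball. The sharper quantitative decay $||{\rm Ric}(\omega_{0})||_{\omega_{0}} = O(r^{-2})$ recalled just above the statement is therefore available if needed.

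Next I would combine these bounded-geometry inputs with the Euclidean-type volume growth ${\rm Vol}_{\omega_{0}}(B(x_{0},r)) = O(r^{2n})$ from Lemma \ref{volume growth 2}. This is precisely the volume growth of Euclidean space of real dimension $2n$, and it is what is needed to upgrade a weak local Sobolev inequality (which holds on every harmonic unit ball by the asymptotic conicalness) into the global homogeneous Sobolev inequality on $X \setminus D$ without any additive $||v||_{L^{2}}$ term on the right-hand side. Applying \cite[Theorem 1.2]{He} with these data and absorbing the factorial from $\omega_{0}^{n}/n!$ into the constant yields the stated inequality.

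The main obstacle I expect is the last step: ensuring that the cited form of Hebey's theorem really produces the homogeneous inequality (no lower-order term), rather than only the inhomogeneous Sobolev inequality. This relies on having the \emph{full} Euclidean volume growth, not merely polynomial growth of some degree, and on the non-collapsing supplied by the positive injectivity radius. Once these two ingredients are used together, the cited theorem applies directly and the lemma follows.
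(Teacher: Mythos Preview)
Your overall strategy---verify the geometric hypotheses and invoke the cited theorem---is exactly what the paper does; the paper's ``proof'' is nothing more than the sentence ``From \cite[Theorem 1.2]{He}, we have'' preceded by the two facts ${\rm Vol}_{\omega_{0}}(B(x_{0},r)) = O(r^{2n})$ and $\|{\rm Ric}(\omega_{0})\|_{\omega_{0}} = O(r^{-2})$.

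There is one slip worth correcting: the reference \cite{He} is H.-J.~Hein, \emph{Weighted Sobolev inequalities under lower Ricci curvature bounds}, not Hebey. Hein's Theorem~1.2 takes as input precisely a quadratic Ricci lower bound ${\rm Ric} \geq -C(1+r^{2})^{-1}g$ together with maximal (Euclidean) volume growth, and outputs the homogeneous $L^{2}$--Sobolev inequality with exponent $2m/(m-2)$. So the two ingredients the paper records just before the lemma are exactly the hypotheses being fed in, and your worry about the homogeneous versus inhomogeneous form is resolved by Hein's result itself. Your discussion of injectivity radius and bounded geometry is not wrong---the asymptotically conical structure does supply those---but it is not what the cited theorem uses, and you can drop that detour.
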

Then, we can apply the Moser's iteration to obtain the $C^{0}_{\delta}$-estimate.\
Following \cite[p,178]{BK}, we have
\begin{lemma}
\label{isomorphic Laplas}
If $2 < \delta < 2n$, the Laplacian $\Delta_{\omega_{0}}:C^{k,\alpha}_{\delta - 2}(X\setminus D) \to C^{k-2,\alpha}_{\delta}(X\setminus D)$ is isomorphic.\
\end{lemma}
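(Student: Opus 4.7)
The plan is to use the barrier function $\rho^{-(\delta-2)}$ together with a maximum-principle argument for injectivity and an exhaustion/barrier scheme for surjectivity; this parallels the Bando--Kobayashi strategy and exploits the computation already carried out just before the lemma. First, by substituting $\delta-2$ in place of $\delta$ in the explicit formula for $\Delta_{\omega_0}\rho^{-\delta}$, one finds that for $2 < \delta < 2n$ both scalar factors have the correct signs, yielding a key inequality of the form
$$
\Delta_{\omega_0}\rho^{-(\delta-2)} \leq -c_{\delta}\,\rho^{-\delta}
$$
for some positive constant $c_\delta$. By Lemma \ref{geo}, $\rho$ and the intrinsic distance $r$ are comparable, so $\rho^{-(\delta-2)}$ is a positive decaying barrier whose decay rate matches precisely the weighted space $C^{k,\alpha}_{\delta-2}$ on the left and $C^{k-2,\alpha}_\delta$ on the right.

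For injectivity, suppose $u \in C^{k,\alpha}_{\delta-2}$ satisfies $\Delta_{\omega_0} u = 0$. For any $\varepsilon > 0$ the functions $\pm u + \varepsilon\,\rho^{-(\delta-2)}$ are strictly superharmonic by the barrier inequality. Exhausting $X\setminus D$ by a sequence of relatively compact smooth domains $\Omega_m$, the weak minimum principle on each $\Omega_m$ together with the fact that both $u$ and $\rho^{-(\delta-2)}$ tend to $0$ at infinity (the former because $\delta > 2$ implies $u \in C^0_{\delta-2}$ decays) forces $\pm u + \varepsilon \rho^{-(\delta-2)} \geq 0$ on $X\setminus D$; letting $\varepsilon \to 0$ gives $u \equiv 0$.

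For surjectivity, given $f \in C^{k-2,\alpha}_\delta$ solve the Dirichlet problem $\Delta_{\omega_0}u_m = f$ on $\Omega_m$ with $u_m|_{\partial\Omega_m}=0$ via standard elliptic theory. Choosing $A>0$ so that $Ac_\delta$ dominates a fixed multiple of $\|f\|_{C^0_\delta}$, the functions $A\rho^{-(\delta-2)} \pm u_m$ are superharmonic on $\Omega_m$ with nonnegative boundary values, hence by the minimum principle $|u_m| \leq A\rho^{-(\delta-2)}$ uniformly in $m$. The weighted local Schauder estimates that accompany the $C^{k,\alpha}$-asymptotically conical structure of $\omega_0$ (see Definition \ref{weight norm}) then upgrade this uniform $C^0_{\delta-2}$ bound to a uniform $C^{k,\alpha}_{\delta-2}$ bound, and a standard diagonal Arzel\`a--Ascoli argument extracts a limit $u \in C^{k,\alpha}_{\delta-2}$ solving $\Delta_{\omega_0}u = f$.

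The only substantive obstacle is verifying that $c_\delta$ can be chosen positive throughout the open range $2 < \delta < 2n$; this is exactly what the preceding barrier computation achieves, and the Sobolev inequality stated immediately above is available if one prefers instead to run Moser iteration (as in Bando--Kobayashi) to pass from an $L^2$-type solution to a pointwise $L^\infty_{\delta-2}$-bound, but either route reduces the problem to the same barrier inequality. The weight range $2 < \delta < 2n$ in the statement is forced precisely by the sign conditions that make this barrier work.
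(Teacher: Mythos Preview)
Your proposal is correct and follows essentially the same route as the paper, which simply defers to Bando--Kobayashi after recording the barrier inequality for $\rho^{-\delta}$ and the Sobolev inequality; your write-up spells out the standard injectivity/surjectivity argument that underlies that citation. The only minor difference is cosmetic: the paper points to Moser iteration (via the Sobolev inequality) for the $C^{0}_{\delta}$-estimate, whereas you obtain it directly from the maximum principle with the same barrier---but you already note this alternative, and both are equivalent implementations of the Bando--Kobayashi scheme.
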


Recall that the standard theorem on Banach spaces (see \cite[p.77]{Yosida}).\

\begin{thm}
\label{range theorem}
Let $\mathcal{X}$ and $\mathcal{Y}$ be Banach spaces.\
Assume that $L : \mathcal{X} \to \mathcal{Y}$ is a bounded and isomorphic linear operator.\
Then, the inverse $L^{-1}$ is also bounded.\
\end{thm}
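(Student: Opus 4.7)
The plan is to deduce this classical fact (the Bounded Inverse Theorem) from the Open Mapping Theorem. The reduction itself is immediate: since $L$ is a bijection, showing $L^{-1} : \mathcal{Y} \to \mathcal{X}$ is continuous is equivalent to showing that $L$ sends open sets to open sets. Because $L^{-1}$ is linear, continuity at $0$ is equivalent to boundedness, and the explicit operator norm bound will fall out of the same construction.

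To establish the open mapping property, I would proceed in two stages. First, invoking the Baire Category Theorem on the Banach space $\mathcal{Y}$, write $\mathcal{Y} = \bigcup_{n \geq 1} \overline{L(B_{\mathcal{X}}(0,n))}$, which uses the surjectivity of $L$. Since $\mathcal{Y}$ is not meager, some closure $\overline{L(B_{\mathcal{X}}(0,n_0))}$ has non-empty interior, and then a standard symmetrization and translation (using the vector space structure) upgrades this to the statement that $\overline{L(B_{\mathcal{X}}(0,1))}$ contains an open ball $B_{\mathcal{Y}}(0, 2r)$ around the origin for some $r > 0$.

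The main technical step — and the only place the completeness of $\mathcal{X}$ enters — is removing the closure. Given any $y \in B_{\mathcal{Y}}(0, r)$, I would construct inductively a sequence $x_k \in \mathcal{X}$ with $\|x_k\|_{\mathcal{X}} \leq 2^{-k+1}$ such that
\[
\Bigl\| y - L\bigl(x_1 + \dots + x_k\bigr) \Bigr\|_{\mathcal{Y}} \leq r \cdot 2^{-k},
\]
at each step choosing $x_{k+1}$ via the density guaranteed by the previous stage, rescaled. Completeness of $\mathcal{X}$ makes $\sum_{k \geq 1} x_k$ converge to some $x$ with $\|x\|_{\mathcal{X}} \leq 2$, and continuity of $L$ forces $L(x) = y$. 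Hence $B_{\mathcal{Y}}(0, r) \subset L(B_{\mathcal{X}}(0, 2))$, which gives openness at $0$; translation gives openness everywhere, and unwinding the constants yields $\|L^{-1} y\|_{\mathcal{X}} \leq (2/r)\|y\|_{\mathcal{Y}}$.

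The only delicate point is the last iterative construction: one must arrange the geometric decay of $\|x_k\|$ carefully so that the series converges and its image approximates $y$ to arbitrary precision, which is exactly where Banach completeness of the domain is essential. Everything else — Baire, symmetrization, and the passage from openness to continuity of $L^{-1}$ — is formal. Since this is entirely standard, one may alternatively simply cite \cite{Yosida} as the paper already does.
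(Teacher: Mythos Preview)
Your proof is correct and is the standard argument via Baire category and the open mapping theorem. The paper itself gives no proof of this statement at all---it is quoted as a classical fact with a reference to \cite[p.~77]{Yosida}---so there is nothing to compare beyond noting that your sketch is exactly the textbook route one would find in that reference.
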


Thus, the inverse of the Laplacian $\Delta_{\omega_{0}}^{-1}$ is bounded.\
In addition, recall the definition of Fredholm operators (see \cite[Chapter 1, \S 1.4]{Gil}):
\begin{definition}
We say that a bounded linear operator $L : \mathcal{X} \to \mathcal{Y}$ between Banach spaces $\mathcal{X}$ and $\mathcal{Y}$ is a {\it Fredholm operator} if the ${\rm dim (Ker}L)$ and ${\rm dim (Coker } L)$ are finite and ${\rm Im}L$ is a closed linear subspace of $\mathcal{Y}$.\
For such an operator $L$, we define an {\it index} of $L$ by
$$
{\rm ind} (L) := {\rm dim (Ker}L) - {\rm dim (Coker } L).
$$
\end{definition}

Thus, immediately we obtain
\begin{lemma}
If $2 < \delta < 2n $, the Laplacian $\Delta_{\omega_{0}}:C^{k,\alpha}_{\delta - 2}(X\setminus D) \to C^{k-2,\alpha}_{\delta}(X\setminus D)$ is a Fredholm operator whose index ${\rm ind}(\Delta_{\omega_{0}})$ is zero.\
Moreover, there exists a bounded inverse $\Delta_{\omega_{0}}^{-1}$ which is also a Fredholm operator whose index is zero.\
\end{lemma}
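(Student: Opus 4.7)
The plan is to observe that this lemma is essentially a formal consequence of the two results immediately preceding it. By the previous lemma, for $2 < \delta < 2n$ the Laplacian
$$
\Delta_{\omega_{0}} : C^{k,\alpha}_{\delta - 2}(X \setminus D) \to C^{k-2,\alpha}_{\delta}(X \setminus D)
$$
is already known to be an isomorphism of Banach spaces. I would therefore extract the Fredholm and index statements directly from the definition, and then invoke Theorem \ref{range theorem} to get the bounded inverse.

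First I would note that isomorphism immediately gives $\mathrm{Ker}(\Delta_{\omega_{0}}) = \{0\}$ and $\mathrm{Coker}(\Delta_{\omega_{0}}) = \{0\}$, so both have dimension zero, which is finite. Moreover, surjectivity means $\mathrm{Im}(\Delta_{\omega_{0}}) = C^{k-2,\alpha}_{\delta}(X \setminus D)$, which is trivially a closed subspace of itself. By the definition of a Fredholm operator recalled just above, this shows $\Delta_{\omega_{0}}$ is Fredholm, and
$$
{\rm ind}(\Delta_{\omega_{0}}) = \dim \mathrm{Ker}(\Delta_{\omega_{0}}) - \dim \mathrm{Coker}(\Delta_{\omega_{0}}) = 0 - 0 = 0.
$$

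Next, since $\Delta_{\omega_{0}}$ is a bounded bijective linear map between the two Banach spaces $C^{k,\alpha}_{\delta - 2}(X\setminus D)$ and $C^{k-2,\alpha}_{\delta}(X\setminus D)$, Theorem \ref{range theorem} (the bounded inverse theorem) supplies a bounded inverse
$$
\Delta_{\omega_{0}}^{-1} : C^{k-2,\alpha}_{\delta}(X \setminus D) \to C^{k,\alpha}_{\delta - 2}(X \setminus D).
$$
This inverse is itself an isomorphism of Banach spaces, so repeating the argument of the previous paragraph verbatim with $\Delta_{\omega_{0}}^{-1}$ in place of $\Delta_{\omega_{0}}$ yields that $\Delta_{\omega_{0}}^{-1}$ is also Fredholm of index zero.

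There is essentially no obstacle here; all the analytic work, namely the elliptic regularity, the Moser iteration giving the $C^{0}_{\delta}$-estimate, and the barrier function argument with $\rho^{-\delta}$, has been carried out in the earlier lemmas to establish the isomorphism claim. The present lemma is just a packaging of that isomorphism in Fredholm language together with an application of the open mapping theorem, and the only thing worth double-checking is that the range of weights $2 < \delta < 2n$ is indeed the one for which the previous isomorphism lemma applies, which is literally the hypothesis.
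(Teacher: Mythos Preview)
Your proposal is correct and matches the paper's approach exactly: the paper introduces this lemma with ``Thus, immediately we obtain'' right after stating the isomorphism lemma, Theorem \ref{range theorem}, and the definition of Fredholm operators, and gives no further proof. You have simply spelled out the implicit one-line argument.
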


Next, we study the operator $\mathcal{D}_{\omega_{0}}^{*}\mathcal{D}_{\omega_{0}}$.\

\begin{lemma}
\label{injective}
Assume that $\delta > 4$ and there is no nonzero holomorphic vector field on $X$ which vanishes on $D$.\
Then, the operator
$$
\mathcal{D}_{\omega_{0}}^{*}\mathcal{D}_{\omega_{0}}:C^{4,\alpha}_{\delta - 4}(X\setminus D) \to C^{0,\alpha}_{\delta}(X\setminus D)
$$
is injective.\
\end{lemma}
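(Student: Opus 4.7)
The plan is to use the standard four-step argument for Lichnerowicz-type operators. Given $\varphi \in C^{4,\alpha}_{\delta-4}$ with $\mathcal{D}^*_{\omega_{0}}\mathcal{D}_{\omega_{0}}\varphi = 0$, I will first deduce $\mathcal{D}_{\omega_{0}}\varphi = 0$ via a cutoff integration by parts; then interpret $V := \nabla^{1,0}\varphi$ as a holomorphic vector field on $X\setminus D$; then extend $V$ holomorphically across $D$; and finally invoke the hypothesis that $X$ admits no nonzero holomorphic vector field vanishing on $D$ to force $V$, and hence $\varphi$, to vanish.

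For the integration-by-parts step, pick smooth cutoffs $\chi_{R}$ equal to $1$ on $B(x_{0},R)$, supported in $B(x_{0},2R)$, with $\omega_{0}$-covariant derivatives of order $k$ bounded by $C/R^{k}$. Pairing $\mathcal{D}^*_{\omega_{0}}\mathcal{D}_{\omega_{0}}\varphi$ against $\chi_{R}^{2}\bar{\varphi}$ and integrating by parts twice produces
\[
\int_{M} \chi_{R}^{2}\, |\mathcal{D}_{\omega_{0}}\varphi|^{2}_{\omega_{0}}\, \omega_{0}^{n} \;=\; E(R),
\]
where $E(R)$ collects commutator terms supported in the annulus $R \leq r \leq 2R$ of the form (one or two derivatives of $\chi_{R}^{2}$) contracted with first and second derivatives of $\varphi$ and with $\mathcal{D}_{\omega_{0}}\varphi$. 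Applying Cauchy--Schwarz, the weighted bounds on $\varphi$ supplied by $\varphi \in C^{4,\alpha}_{\delta-4}$, and the volume growth $O(r^{2n})$ of Lemma \ref{volume growth 2}, one shows that $E(R) \to 0$ as $R \to \infty$. Passing to the limit yields $\int_{M} |\mathcal{D}_{\omega_{0}}\varphi|^{2}_{\omega_{0}}\, \omega_{0}^{n} = 0$, hence $\mathcal{D}_{\omega_{0}}\varphi = 0$.

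For the extension step, the identity $\bar{\partial} V = \mathcal{D}_{\omega_{0}}\varphi = 0$ shows $V$ is holomorphic on $X \setminus D$. Because $\omega_{0}$ dominates $\theta_{X}$ by a factor of order $\|\sigma_{D}\|^{-2\hat{S}_{D}/n(n-1)}$ in base directions and an even larger factor in the normal direction to $D$, the inverse components $\omega_{0}^{i\bar{j}}$ decay as $\sigma_{D} \to 0$; combined with the decay of $\partial\varphi$ encoded in the weighted norm, this yields $|V|_{\theta_{X}} \to 0$ near $D$. Hence $V$ is bounded near $D$, and Riemann's removable singularity theorem, applied componentwise in a local trivialization of $T^{1,0}X$, gives a holomorphic extension of $V$ to all of $X$; continuity then forces $V$ to vanish along $D$. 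By hypothesis $V \equiv 0$, so $\nabla^{1,0}\varphi = 0$, which by nondegeneracy of $\omega_{0}$ means $\bar{\partial}\varphi = 0$ on $X \setminus D$. Thus $\varphi$ is a bounded holomorphic function on $X \setminus D$; it extends holomorphically to the compact manifold $X$, must therefore be constant, and its decay at $D$ forces $\varphi \equiv 0$.

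The main obstacle I anticipate is the careful accounting in the cutoff integration by parts: verifying $E(R) \to 0$ for every $\delta > 4$ (and not merely in the range where $|\mathcal{D}_{\omega_{0}}\varphi|^{2}\omega_{0}^{n}$ is a priori integrable against the $r^{2n}$ volume growth) may require exploiting cancellations specific to the Lichnerowicz operator, or replacing the linear cutoff $\chi_{R}$ by a logarithmic profile. The remaining steps are standard once the asymptotic behavior of $\omega_{0}$ near $D$ established in Section 3 is in hand.
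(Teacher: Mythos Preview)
Your proposal follows the same four-step strategy as the paper: integrate by parts to obtain $\mathcal{D}_{\omega_0}\varphi = 0$, recognize $\nabla^{1,0}\varphi$ as a holomorphic vector field on $X\setminus D$, extend it across $D$ using the decay of the inverse metric and of $\partial\varphi$, and invoke the hypothesis to force $\varphi$ constant (hence zero by decay). The paper is terser on the first step, writing simply $0 = \int_{X\setminus D} \phi\,\mathcal{D}^*_{\omega_0}\mathcal{D}_{\omega_0}\phi\,\omega_0^n = \int_{X\setminus D} |\mathcal{D}_{\omega_0}\phi|^2\,\omega_0^n$ without cutoffs; your anticipated obstacle about $E(R)\to 0$ for every $\delta>4$ is genuine and is not resolved in the paper's proof either---the paper instead sidesteps it in Section~\ref{sec:6} by restricting $\delta$ to be close to $2n$, so that $\phi\,\mathcal{D}^*_{\omega_0}\mathcal{D}_{\omega_0}\phi$ is integrable against $\omega_0^n$, which is the only range in which the lemma is actually applied.
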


\begin{proof}
Assume that $\phi \in C^{4,\alpha}_{\delta - 4}(X\setminus D)$ satisfies $\mathcal{D}_{\omega_{0}}^{*}\mathcal{D}_{\omega_{0}}\phi = 0$.\
Integrating by parts, we have
\[
0 = \int_{X \setminus D} \phi \mathcal{D}_{\omega_{0}}^{*}\mathcal{D}_{\omega_{0}}\phi \omega_{0}^{n} =  \int_{X \setminus D} |\mathcal{D}_{\omega_{0}}\phi|^{2} \omega_{0}^{n}.
\]
Since $\overline{\partial} \nabla^{1,0} \phi = \mathcal{D}_{\omega_{0}}\phi = 0$,\
$\nabla^{1,0} \phi$ is a holomorphic vector field on $X \setminus D$.\
By writing locally $\omega_{0} = \sqrt{-1} g_{i,\overline{j}} dz^{i} \wedge d\overline{z}^{j}$, the (1,0)-gradient of $\phi$ can be written as
\[
\nabla^{1,0} \phi = g^{i,\overline{j}} \frac{\partial \phi}{\partial \overline{z}^{j}} \frac{\partial}{\partial z^{i}}.
\]
So, all coefficients $g^{i,\overline{j}} \partial \phi/\partial \overline{z}^{j}$ are holomorphic.\
Moreover, the definition of $\phi$ and the asymptotically conicalness imply differentials of $\phi$ and factors $g^{i,\overline{j}}$ decay near $D$.\
Thus, $\nabla^{1,0} \phi$ can be extended holomorphically to $X$ and vanishes on $D$.\
The hypothesis implies that $\phi$ is constant.\
Since $\phi$ decays near $D$, we have $\phi = 0$ and conclude that $\mathcal{D}_{\omega_{0}}^{*}\mathcal{D}_{\omega_{0}}$ is injective.
\end{proof}

Recall the following fundamental fact (see \cite[Chapter 1, \S 1.4]{Gil}).\

\begin{thm}
\label{fredholm}
Let $L : \mathcal{X} \to \mathcal{Y}$ be a bounded linear operator between Banach spaces $\mathcal{X}$ and $\mathcal{Y}$.\
Then, $L$ is Fredholm if and only if there exists a bounded linear operator $H: \mathcal{Y} \to \mathcal{X}$ such that operators $I_{\mathcal{X}} - H \circ L$ and $I_{\mathcal{Y}} - L \circ H$ are compact.\
Moreover, $H$ is also Fredholm and satisfies
$$
{\rm ind} (L) = - {\rm ind} (H).
$$
\end{thm}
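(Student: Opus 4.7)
The plan is to establish this Atkinson-type characterization in two steps, proving each implication separately, and then to extract the index identity from multiplicativity of the Fredholm index.

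For the forward direction, assume $L$ is Fredholm. Since ${\rm Ker}\,L$ is finite-dimensional it admits a closed topological complement $\mathcal{X}_{0} \subset \mathcal{X}$, and since ${\rm Im}\,L$ is closed of finite codimension it admits a finite-dimensional algebraic complement $\mathcal{Y}_{1} \subset \mathcal{Y}$, which is automatically closed. The restriction $L : \mathcal{X}_{0} \to {\rm Im}\,L$ is a bijective continuous map between Banach spaces, so the open mapping theorem supplies a bounded inverse $P : {\rm Im}\,L \to \mathcal{X}_{0}$. Define $H : \mathcal{Y} \to \mathcal{X}$ to equal $P$ on ${\rm Im}\,L$ and to vanish on $\mathcal{Y}_{1}$. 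Then $I_{\mathcal{X}} - H \circ L$ is the projection of $\mathcal{X}$ onto ${\rm Ker}\,L$ along $\mathcal{X}_{0}$, and $I_{\mathcal{Y}} - L \circ H$ is the projection onto $\mathcal{Y}_{1}$ along ${\rm Im}\,L$; both are of finite rank and therefore compact.

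For the reverse direction, set $K_{1} := I_{\mathcal{X}} - H \circ L$ and $K_{2} := I_{\mathcal{Y}} - L \circ H$, both compact by hypothesis. Then ${\rm Ker}\,L \subset {\rm Ker}(H \circ L) = {\rm Ker}(I_{\mathcal{X}} - K_{1})$, and the Riesz--Schauder theorem tells us the kernel of $I - K_{1}$ is finite-dimensional, so ${\rm Ker}\,L$ is finite-dimensional. For the image, ${\rm Im}\,L \supset {\rm Im}(L \circ H) = {\rm Im}(I_{\mathcal{Y}} - K_{2})$, and Riesz--Schauder also shows that ${\rm Im}(I - K_{2})$ is closed of finite codimension in $\mathcal{Y}$. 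Any subspace of $\mathcal{Y}$ containing such a subspace is itself closed of finite codimension, so ${\rm Im}\,L$ is closed of finite codimension. Hence $L$ is Fredholm; the symmetric argument, reading the role of $L$ and $H$ in the opposite order, shows $H$ is Fredholm as well.

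For the index identity, I would invoke multiplicativity of the Fredholm index, ${\rm ind}(L \circ H) = {\rm ind}\,L + {\rm ind}\,H$. Since $L \circ H = I_{\mathcal{Y}} - K_{2}$ with $K_{2}$ compact, the Fredholm alternative gives ${\rm ind}(L \circ H) = 0$, so ${\rm ind}\,L = - {\rm ind}\,H$. The main obstacle is the closedness of ${\rm Im}\,L$ in the reverse direction: one must extract closedness of the range from the mere existence of a parametrix modulo compact operators, and this is exactly where Riesz--Schauder enters. A secondary subtlety is the multiplicativity of the index itself, which can be proved by the same direct-sum decomposition argument used in the forward implication applied to ${\rm Ker}\,H$ and a complement of ${\rm Im}\,H$, reducing the computation to a finite-dimensional dimension count, but care is needed to avoid circular reasoning with the preceding steps.
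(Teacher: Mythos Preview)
Your argument is correct and is essentially the standard textbook proof of Atkinson's theorem: build a parametrix from complemented subspaces in the forward direction, appeal to Riesz--Schauder in the reverse direction, and deduce the index relation from multiplicativity of the Fredholm index applied to $L \circ H = I_{\mathcal{Y}} - K_{2}$.

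However, the paper does not give its own proof of this statement. It is quoted as a known result with the remark ``Recall the following fundamental fact (see \cite[Chapter 1, \S 1.4]{Gil})'' and then applied as a black box in the proof of Lemma~\ref{lemma 2}. So there is no in-paper proof to compare against; your proposal simply fills in what the paper delegates to Gilkey. The approach you outline is precisely the one found in standard references such as \cite{Gil}, so in that sense it matches the intended source.
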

Then, we can show the following.
\begin{lemma}
\label{lemma 2}
If $4 < \delta < 2n $, the operator $\mathcal{D}_{\omega_{0}}^{*}\mathcal{D}_{\omega_{0}}:C^{k,\alpha}_{\delta - 4}(X\setminus D) \to C^{k-4,\alpha}_{\delta}(X\setminus D)$ is a Fredholm operator whose index ${\rm ind}(\mathcal{D}_{\omega_{0}}^{*}\mathcal{D}_{\omega_{0}})$ is zero.\
\end{lemma}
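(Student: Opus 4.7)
The plan is to realize $\mathcal{D}^{*}_{\omega_{0}}\mathcal{D}_{\omega_{0}}$ as a compact perturbation of the iterated Laplacian $\Delta_{\omega_{0}}^{2}$ and then invoke the stability of the Fredholm index. The first step is to verify that
$$
\Delta_{\omega_{0}}^{2} : C^{k,\alpha}_{\delta - 4}(X \setminus D) \to C^{k-4,\alpha}_{\delta}(X \setminus D)
$$
is an isomorphism for $4 < \delta < 2n$. Writing $\Delta_{\omega_{0}}^{2} = \Delta_{\omega_{0}} \circ \Delta_{\omega_{0}}$, each factor $\Delta_{\omega_{0}} : C^{k,\alpha}_{\delta-4} \to C^{k-2,\alpha}_{\delta-2}$ and $\Delta_{\omega_{0}} : C^{k-2,\alpha}_{\delta-2} \to C^{k-4,\alpha}_{\delta}$ is an isomorphism by the Laplacian isomorphism lemma shown above, since both $2 < \delta - 2$ and $\delta < 2n$ hold in our range. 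Hence $\Delta_{\omega_{0}}^{2}$ is an isomorphism, in particular Fredholm of index zero, with bounded inverse $(\Delta_{\omega_{0}}^{2})^{-1}$ by Theorem \ref{range theorem}.

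Next, I would write $\mathcal{D}^{*}_{\omega_{0}}\mathcal{D}_{\omega_{0}} = \Delta_{\omega_{0}}^{2} + K$, where
$$
K\varphi := R^{i,\overline{j}} \varphi_{i,\overline{j}} + (\nabla^{1,0}\varphi, \nabla^{0,1} S(\omega_{0}))_{\omega_{0}}
$$
is a differential operator of order at most two whose coefficients decay at infinity. Indeed, the asymptotically conical geometry of $(X \setminus D, \omega_{0})$, combined with the explicit Ricci formula of Lemma \ref{background ricci}, gives $|{\rm Ric}(\omega_{0})|_{\omega_{0}} = O(r^{-2})$, so $R^{i,\overline{j}} \to 0$ at infinity, while $\nabla^{0,1}S(\omega_{0})$ decays at least like $r^{-3-2n(n-1)/\hat{S}_{D}}$ by Theorem \ref{scalar curvature decay} together with interior Schauder estimates on unit harmonic balls. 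I would then deduce the compactness of $K : C^{4,\alpha}_{\delta-4}(X \setminus D) \to C^{0,\alpha}_{\delta}(X \setminus D)$ by a standard diagonal argument: for a bounded sequence $(\varphi_{n})$ in $C^{4,\alpha}_{\delta-4}$, interior elliptic estimates and the Arzel\`a--Ascoli theorem produce a subsequence converging in $C^{2,\beta}_{\mathrm{loc}}$ ($\beta < \alpha$) on every compact exhaustion, and the decay of the coefficients of $K$ controls the weighted tail contribution to $\|K\varphi_{n}\|_{C^{0,\alpha}_{\delta}}$ on the complement of a sufficiently large geodesic ball, uniformly in $n$.

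Finally, setting $H := (\Delta_{\omega_{0}}^{2})^{-1}$, both compositions
$$
H \circ \mathcal{D}^{*}_{\omega_{0}}\mathcal{D}_{\omega_{0}} - I = H \circ K, \qquad \mathcal{D}^{*}_{\omega_{0}}\mathcal{D}_{\omega_{0}} \circ H - I = K \circ H,
$$
are compact, so Theorem \ref{fredholm} forces $\mathcal{D}^{*}_{\omega_{0}}\mathcal{D}_{\omega_{0}}$ to be Fredholm with ${\rm ind}(\mathcal{D}^{*}_{\omega_{0}}\mathcal{D}_{\omega_{0}}) = -{\rm ind}(H) = {\rm ind}(\Delta_{\omega_{0}}^{2}) = 0$. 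The main obstacle is the compactness of $K$: since the weight gap $\delta - 4 \to \delta$ is exactly the order of the second derivatives appearing in $K$, one cannot simply factor $K$ through a strictly faster-decaying weighted space. Compactness must instead be extracted from the actual decay of ${\rm Ric}(\omega_{0})$ and $\nabla S(\omega_{0})$ at infinity via a weighted Arzel\`a--Ascoli argument, which is precisely where the asymptotically conical framework of Section \ref{sec:4} enters essentially.
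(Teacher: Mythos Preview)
Your proposal is correct and follows essentially the same route as the paper: both take $H=(\Delta_{\omega_{0}}^{2})^{-1}$ as a parametrix, identify $\mathcal{D}_{\omega_{0}}^{*}\mathcal{D}_{\omega_{0}}-\Delta_{\omega_{0}}^{2}$ with the second-order operator $\phi\mapsto R^{i,\overline{j}}\phi_{i,\overline{j}}+(\nabla^{1,0}\phi,\nabla^{0,1}S(\omega_{0}))_{\omega_{0}}$, argue its compactness via Arzel\`a--Ascoli using the decay of the coefficients in the asymptotically conical geometry, and then invoke Theorem~\ref{fredholm}. Your write-up is in fact more explicit than the paper's on two points---the factorization showing $\Delta_{\omega_{0}}^{2}$ is an isomorphism for $4<\delta<2n$, and the mechanism by which coefficient decay converts $C^{2,\beta}_{\mathrm{loc}}$ convergence into weighted $C^{0,\alpha}_{\delta}$ convergence---but the underlying argument is the same.
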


\begin{proof}
Recall the equation
$$
\mathcal{D}^{*}_{\omega_{0}}\mathcal{D}_{\omega_{0}} \phi = \Delta^{2}_{\omega_{0}} \phi + R^{j,\overline{i}} \phi_{i,\overline{j}} + (\nabla^{1,0} \phi, \nabla^{0,1} S(\omega_{0}))_{\omega_{0}}.
$$
Since $(\Delta^{2}_{\omega_{0}})^{-1} : C^{k-4,\alpha}_{\delta}(X\setminus D) \to C^{k,\alpha}_{\delta - 4}(X\setminus D)$ is bounded, it is continuous.\
Consider the linear operator 
$$
I_{C^{k,\alpha}_{\delta - 4}(X\setminus D)} - (\Delta^{2}_{\omega_{0}})^{-1} \circ \mathcal{D}^{*}_{\omega_{0}}\mathcal{D}_{\omega_{0}}.
$$
From the equation above, we obtain
$$
\left( I_{C^{k,\alpha}_{\delta - 4}(X\setminus D)} - (\Delta^{2}_{\omega_{0}})^{-1} \circ \mathcal{D}^{*}_{\omega_{0}}\mathcal{D}_{\omega_{0}} \right)\phi = (\Delta^{2}_{\omega_{0}})^{-1} (R^{j,\overline{i}} \phi_{i,\overline{j}} + (\nabla^{1,0} \phi, \nabla^{0,1} S(\omega_{0}))_{\omega_{0}})
$$
for any $\phi \in C^{k,\alpha}_{\delta - 4}$.\
Since $\phi \in C^{k,\alpha}_{\delta - 4}(X\setminus D)$, the Arzela-Ascoli theorem implies that the operator
$$
\phi \to R^{j,\overline{i}} \phi_{i,\overline{j}} + (\nabla^{1,0} \phi, \nabla^{0,1} S(\omega_{0}))_{\omega_{0}}
$$
is compact.\
The fact that $(\Delta^{2}_{\omega_{0}})^{-1}$ is continuous implies that $I_{C^{k,\alpha}_{\delta - 4}(X\setminus D)} - (\Delta^{2}_{\omega_{0}})^{-1} \circ \mathcal{D}^{*}_{\omega_{0}}\mathcal{D}_{\omega_{0}}$ is also compact.\
Similarly, we obtain the compactness of the operator
$$
I_{C^{k-4,\alpha}_{\delta}(X\setminus D)} - \mathcal{D}^{*}_{\omega_{0}}\mathcal{D}_{\omega_{0}} \circ (\Delta^{2}_{\omega_{0}})^{-1}.
$$
From Theorem \ref{fredholm}, we have finished the proof.
\end{proof}

Then, Lemma \ref{injective} and Lemma \ref{lemma 2} imply that
\begin{prop}
\label{key proposition 1}
If $4 < \delta < 2n$ and there is no nonzero holomorphic vector field on $X$ which vanishes on $D$, the operator $\mathcal{D}_{\omega_{0}}^{*}\mathcal{D}_{\omega_{0}}:C^{k,\alpha}_{\delta - 4}(X\setminus D) \to C^{k-4,\alpha}_{\delta}(X\setminus D)$ is isomorphic and has a bounded inverse.\
\end{prop}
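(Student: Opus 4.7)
The plan is to combine the two lemmas immediately preceding the proposition: Lemma \ref{injective} gives injectivity under the hypothesis that no nonzero holomorphic vector field on $X$ vanishes on $D$, and Lemma \ref{lemma 2} gives that $\mathcal{D}_{\omega_{0}}^{*}\mathcal{D}_{\omega_{0}}$ is Fredholm of index zero in the weight range $4 < \delta < 2n$. Once both are in hand, the isomorphism property follows by elementary Fredholm theory, and boundedness of the inverse is automatic by the open mapping theorem (Theorem \ref{range theorem}).

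More concretely, I would proceed as follows. First I would note that by Lemma \ref{injective}, $\ker \mathcal{D}_{\omega_{0}}^{*}\mathcal{D}_{\omega_{0}} = 0$ as a subspace of $C^{k,\alpha}_{\delta-4}(X\setminus D)$, since the hypothesis $\delta > 4$ already guarantees the required decay of $\phi$ near $D$. Second, Lemma \ref{lemma 2} asserts that the operator is Fredholm with
\[
\operatorname{ind}(\mathcal{D}_{\omega_{0}}^{*}\mathcal{D}_{\omega_{0}}) = \dim \ker \mathcal{D}_{\omega_{0}}^{*}\mathcal{D}_{\omega_{0}} - \dim \operatorname{coker} \mathcal{D}_{\omega_{0}}^{*}\mathcal{D}_{\omega_{0}} = 0.
\]
Combining these gives $\dim \operatorname{coker} \mathcal{D}_{\omega_{0}}^{*}\mathcal{D}_{\omega_{0}} = 0$; since the image is closed (by the Fredholm property), this forces surjectivity. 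Therefore $\mathcal{D}_{\omega_{0}}^{*}\mathcal{D}_{\omega_{0}}$ is a bijective bounded linear operator between the Banach spaces $C^{k,\alpha}_{\delta-4}(X\setminus D)$ and $C^{k-4,\alpha}_{\delta}(X\setminus D)$.

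Finally, to promote bijectivity to existence of a bounded inverse, I would invoke Theorem \ref{range theorem} (the bounded inverse theorem), which gives that $(\mathcal{D}_{\omega_{0}}^{*}\mathcal{D}_{\omega_{0}})^{-1}:C^{k-4,\alpha}_{\delta}(X\setminus D) \to C^{k,\alpha}_{\delta-4}(X\setminus D)$ is a bounded linear operator. This completes the proof.

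In truth there is no real obstacle here, since all the substantive analysis, namely the injectivity argument (which uses extension of the holomorphic vector field $\nabla^{1,0}\phi$ across $D$ and the no-holomorphic-vector-field hypothesis), and the Fredholm/index-zero computation (via comparison with $\Delta_{\omega_0}^2$ using the Arzelà–Ascoli compactness of lower-order terms), has already been carried out. The present proposition is purely a short synthesis: injective $+$ Fredholm of index zero $\Rightarrow$ isomorphism, and bijective bounded linear map between Banach spaces $\Rightarrow$ bounded inverse.
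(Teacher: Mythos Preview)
Your proposal is correct and matches the paper's approach exactly: the paper simply states that Lemma \ref{injective} and Lemma \ref{lemma 2} imply the proposition, without writing out any further details. You have merely spelled out the standard Fredholm-theoretic step (injective plus index zero implies surjective, hence bijective) and invoked Theorem \ref{range theorem} for boundedness of the inverse, which is precisely the intended synthesis.
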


Thus, there exists $K>0$ such that\
\[
|| (\mathcal{D}_{\omega_{0}}^{*}\mathcal{D}_{\omega_{0}})^{-1} ||_{C^{0,\alpha}_{\delta} \to C^{4,\alpha}_{\delta - 4}} < K^{-1}.
\]
In the next section, we study the operator $L_{\omega_{0}}  = - \mathcal{D}_{\omega_{0}}^{*}\mathcal{D}_{\omega_{0}} + (\nabla^{1,0} *, \nabla^{0,1} S(\omega_{0}))_{\omega_{0}}$.\
\begin{rem}
We can show that if the $C_{2}^{1,\alpha}$-norm of $S(\omega_{0})$ is sufficiently small,
there exists the bounded inverse of $L_{\omega_{0}}$ satisfying 
\[
||L_{\omega_{0}}^{-1}||_{C^{0,\alpha}_{\delta} \to C^{4,\alpha}_{\delta-4}} < \hat{K}^{-1}
\]
for some $\hat{K}>0$ (Condition \ref{condition A}).\
\end{rem}

\section{
Proof of Theorem \ref{complete scalar-flat}}
\label{sec:6}

This section also follows from Arezzo-Pacard \cite{AP1}, \cite{AP2} (see also \cite{Sz1}).\
Since we assume that
$$
0 < \hat{S}_{D} < n(n-1),
$$
we can choose a weight $\delta$ so that
\begin{equation}
\label{weight1}
\delta \in (4, \min \{ 2n, 2 + 2n(n-1)/\hat{S}_{D} \})
\end{equation}
Note that if $\theta_{D}$ is cscK, Theorem \ref{scalar curvature decay} implies that $S(\omega_{0}) = O(r^{-\delta})$.\
In addition, from Lemma \ref{geo}, we can choose $\delta$ sufficiently close to $\min \{ 2n, 2 + 2n(n-1)/\hat{S}_{D} \}$ so that a function
\begin{equation}
\label{weight2}
\phi \mathcal{D}_{\omega_{0}}^{*}\mathcal{D}_{\omega_{0}} \phi
\end{equation}
is integrable for $\phi \in C^{4,\alpha}_{\delta - 4}$ with respect to the volume form $\omega_{0}^{n}$.\
Hereafter, we fix a weight $\delta$ satisfying (\ref{weight1}) and (\ref{weight2}).\

\begin{rem}
If $(D,L_{D}) = (\mathbb{P}^{n-1}, \mathscr{O}(1))$, the equality above holds, i.e., $\hat{S}_{D} = n(n-1)$.\
\end{rem}

We will show that the operator $\mathcal{N}:C^{4,\alpha}_{\delta - 4}(X\setminus D) \to C^{4,\alpha}_{\delta-4}(X\setminus D)$ defined in (\ref{operator N}) has a fixed point under Condition \ref{condition A} and Condition \ref{condition B}.\
First, we have

\begin{lemma}
\label{contraction lemma}
There exists $c_{0} > 0$ depending only on $\omega_{0}$ such that if $||\phi||_{C^{4,\alpha}_{-2}(X\setminus D)} \leq c_{0}$, we have
$$
||L_{\omega_{\phi}} - L_{\omega_{0}}||_{C^{4,\alpha}_{\delta - 4} \to C^{0,\alpha}_{\delta}} \leq \hat{K}/2
$$
and $\omega_{\phi} = \omega_{0} + \dol \phi$ is positive.\
\end{lemma}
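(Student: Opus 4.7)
The plan is to derive both conclusions from a single Taylor expansion of $L_\omega$ in the metric, exploiting the asymptotically conical bookkeeping of the weighted norms. First I would handle positivity: in a harmonic chart $B_p$ at a point $p$ with $r=r(p)$, the asymptotically conical condition gives a uniform lower bound $(r^2+1)^{-1}g_{i\bar j}\ge cI$ for some $c>0$ independent of $p$, while the hypothesis $||\phi||_{C^{4,\alpha}_{-2}}\le c_0$ translates into $||\phi||_{C^{4,\alpha}(B_p)}\le c_0(r^2+1)$, so $|\phi_{i\bar j}|\le c_0(r^2+1)$ uniformly in $p$. Choosing $c_0$ strictly smaller than $c$ therefore forces $\omega_\phi=\omega_0+\dol\phi$ to be uniformly positive definite on $X\setminus D$.

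For the operator estimate, I would expand
$$
L_{\omega_\phi}\psi-L_{\omega_0}\psi=-\bigl(\Delta^2_{\omega_\phi}-\Delta^2_{\omega_0}\bigr)\psi-\bigl(R_{\omega_\phi}^{i\bar j}-R_{\omega_0}^{i\bar j}\bigr)\psi_{i\bar j}
$$
and apply the inverse-matrix formula of Lemma \ref{inverse formula} to $g+\phi_{i\bar j}$, which gives $g^{i\bar j}_{\omega_\phi}-g^{i\bar j}_{\omega_0}=-g^{i\bar k}_{\omega_0}\phi_{k\bar l}g^{l\bar j}_{\omega_\phi}$. Substituting this into $\Delta^2_{\omega_\phi}=(g_{\omega_\phi}^{i\bar q}\partial_i\partial_{\bar q})^2$, and using the Ricci identity $R_{\omega_\phi}-R_{\omega_0}=-\dol\log\bigl(\det(g+\phi_{i\bar j})/\det g\bigr)$, expresses the difference as a finite sum of universal polynomials in the derivatives of $\phi$ and $\psi$ of order at most four, in which every summand contains at least one derivative of $\phi$ as a factor.

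It then remains to estimate each such term in $C^{0,\alpha}_\delta$. The key bookkeeping is that, in each harmonic chart, a $C^{k,\alpha}_{-2}$-factor is pointwise bounded by $c_0(r^2+1)$, and this is absorbed by the compensating powers of the rescaled metric $(r^2+1)^{-1}g_{i\bar j}$ that appear when one differentiates the inverse metric and the $\log\det$. Summing the finitely many polynomial terms yields a bound
$$
||L_{\omega_\phi}\psi-L_{\omega_0}\psi||_{C^{0,\alpha}_\delta}\le C\,||\phi||_{C^{4,\alpha}_{-2}}\,||\psi||_{C^{4,\alpha}_{\delta-4}}
$$
with $C>0$ independent of $\phi$ and $\psi$. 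Taking $c_0$ small enough that $c_0 C\le \hat K/2$ and that positivity holds proves the lemma. The main technical obstacle is precisely this uniform weighted estimate of each polynomial term: one has to verify that every combination of derivatives of $\phi$ and $\psi$ has the correct decay at infinity uniformly in $p\in X\setminus D$, which relies essentially on the uniform $C^{k,\alpha}$-bounds on $(r^2+1)^{-1}g_{i\bar j}$ and on the uniformity of the harmonic coordinate charts guaranteed by the asymptotically conical geometry.
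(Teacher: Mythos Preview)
Your proof is correct and follows essentially the same route as the paper's: both expand $L_{\omega_\phi}-L_{\omega_0}$ via the local expression $L_\omega\psi=-\Delta_\omega^2\psi-R_\omega^{i\bar j}\psi_{i\bar j}$, use the elementary identity $g_\phi^{-1}-g^{-1}=g_\phi^{-1}(g-g_\phi)g^{-1}$ (your citation of Lemma~\ref{inverse formula} is a slight misattribution---that lemma concerns block inverses, whereas what you actually invoke is the identity the paper records as~(\ref{inv1})), and then balance the powers of $(r^2+1)$ in the asymptotically conical charts. Your write-up is in fact more explicit than the paper's sketch, which treats only one representative term and leaves the positivity of $\omega_\phi$ implicit.
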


\begin{proof}
Note that
\begin{equation}
\label{inv1}
g_{\phi}^{-1} - g^{-1} = g_{\phi}^{-1} ( g - g_{\phi} ) g^{-1}
\end{equation}
for $\phi$ such that $\omega_{\phi} = \omega_{0} + \dol \phi$ is positive.\

Locally, we can write as
\begin{equation}
\label{Ls}
L_{\omega_{\phi}} \psi = - \Delta_{\omega_{\phi}}^{2} \psi - R_{\omega_{\phi}}^{i,\overline{j}} \psi_{i,\overline{j}}.
\end{equation}
For instance, we have\\

$||(r^{2} + 1)^{\delta/2} (g_{\phi}^{i,\overline{j}}g_{\phi}^{k,\overline{l}} - g^{i,\overline{j}}g^{k,\overline{l}}) \psi_{i,\overline{j},k,\overline{l}}||_{C^{0,\alpha}}$
\begin{eqnarray*}
&\leq& ||(r^{2} + 1)^{4/2} (g_{\phi}^{i,\overline{j}}g_{\phi}^{k,\overline{l}} - g^{i,\overline{j}}g^{k,\overline{l}})||_{C^{0,\alpha}} ||\psi||_{C^{4,\alpha}_{\delta - 4}(X\setminus D)}\\
&=& ||(r^{2} + 1)^{4/2} ( g_{\phi}^{i,\overline{j}}(g_{\phi}^{k,\overline{l}} - g^{k,\overline{l}}) + (g_{\phi}^{i,\overline{j}} - g^{i,\overline{j}})g^{k,\overline{l}})||_{C^{0,\alpha}} ||\psi||_{C^{4,\alpha}_{\delta - 4}(X\setminus D)}.\\
\end{eqnarray*}
Note that $g_{\phi}^{i,\overline{j}} = O(r^{-2})$ as $r \to \infty$, if $c_{0}$ is sufficiently small.\
Thus, if $c_{0}$ is sufficiently small, the equation (\ref{inv1}) implies that the term above can be made small arbitrarily.\
Applying the same argument to remainders in (\ref{Ls}), we can use the asymptotically conicalness to obtain the desired result.
\end{proof}

To show that the operator $\mathcal{N} : C^{4,\alpha}_{\delta - 4} \to C^{4,\alpha}_{\delta - 4}$ is a contraction, we need the following lemma :

\begin{lemma}
\label{mean value}
Assume that
$$
||\phi||_{C^{4,\alpha}_{\delta - 4}},\hspace{5pt} ||\psi||_{C^{4,\alpha}_{\delta - 4}} \leq c_{0}.
$$
Then, we have
$$
||\mathcal{N}(\phi) - \mathcal{N}(\psi)||_{C^{4,\alpha}_{\delta - 4}} \leq \frac{1}{2} ||\phi - \psi||_{C^{4,\alpha}_{\delta - 4}}.
$$
\end{lemma}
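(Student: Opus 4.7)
The plan is to reduce everything to Condition~\ref{condition A} together with Lemma~\ref{contraction lemma} via a standard fixed-point/mean-value argument. First I would rewrite
\[
\mathcal{N}(\phi) - \mathcal{N}(\psi) = - L_{\omega_{0}}^{-1}\bigl(Q_{\omega_{0}}(\phi) - Q_{\omega_{0}}(\psi)\bigr),
\]
where the nonlinear remainder is defined implicitly by
\[
S(\omega_{0} + \dol\phi) = S(\omega_{0}) + L_{\omega_{0}}(\phi) + Q_{\omega_{0}}(\phi).
\]
By Condition~\ref{condition A}, $L_{\omega_{0}}^{-1}: C^{0,\alpha}_{\delta} \to C^{4,\alpha}_{\delta-4}$ is bounded with operator norm at most $\hat{K}^{-1}$, so it suffices to prove the estimate
\[
\|Q_{\omega_{0}}(\phi) - Q_{\omega_{0}}(\psi)\|_{C^{0,\alpha}_{\delta}} \leq \tfrac{\hat{K}}{2}\,\|\phi - \psi\|_{C^{4,\alpha}_{\delta-4}}
\]
whenever $\|\phi\|_{C^{4,\alpha}_{\delta-4}}, \|\psi\|_{C^{4,\alpha}_{\delta-4}} \leq c_{0}$.

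The key observation is that differentiating $Q_{\omega_{0}}$ at $\chi$ in the direction $\eta$ gives exactly
\[
DQ_{\omega_{0}}(\chi)[\eta] = \bigl(L_{\omega_{\chi}} - L_{\omega_{0}}\bigr)\eta,
\]
since the $t$-derivative of $S(\omega_{0} + \dol(\chi + t\eta))$ at $t=0$ is $L_{\omega_{\chi}}(\eta)$, while the term $L_{\omega_{0}}(\phi)$ subtracted in the definition of $Q_{\omega_{0}}$ contributes $L_{\omega_{0}}(\eta)$. Applying the fundamental theorem of calculus along the segment from $\psi$ to $\phi$,
\[
Q_{\omega_{0}}(\phi) - Q_{\omega_{0}}(\psi) = \int_{0}^{1} \bigl(L_{\omega_{\chi_{t}}} - L_{\omega_{0}}\bigr)(\phi - \psi)\, dt, \qquad \chi_{t} := \psi + t(\phi - \psi).
\]

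Next I would check the hypothesis of Lemma~\ref{contraction lemma} for each $\chi_{t}$. Since $\delta > 4$, the weighted space $C^{4,\alpha}_{\delta-4}$ continuously embeds into $C^{4,\alpha}_{-2}$ with norm at most $1$, so $\|\chi_{t}\|_{C^{4,\alpha}_{-2}} \leq \max(\|\phi\|,\|\psi\|) \leq c_{0}$ by convexity, and thus $\omega_{\chi_{t}}$ remains a bona fide K\"ahler metric on $X \setminus D$ satisfying the estimate
\[
\|L_{\omega_{\chi_{t}}} - L_{\omega_{0}}\|_{C^{4,\alpha}_{\delta-4} \to C^{0,\alpha}_{\delta}} \leq \hat{K}/2.
\]
Pulling the norm inside the integral and combining with the bound on $L_{\omega_{0}}^{-1}$ yields the claimed contraction factor $1/2$.

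The main obstacle is honestly the identification $DQ_{\omega_{0}}(\chi) = L_{\omega_{\chi}} - L_{\omega_{0}}$ together with the legitimacy of differentiating under the integral in the weighted H\"older setting; both reduce to the fact, already implicitly used in Lemma~\ref{contraction lemma}, that the coefficients of $L_{\omega_{\chi_{t}}}$ depend smoothly on $\chi_{t}$ in the $C^{4,\alpha}_{-2}$ topology and that the asymptotically conical geometry gives enough decay of $g_{\chi_{t}}^{-1} - g_{0}^{-1}$ to make the local computations uniform at infinity. Once this is established the proof is immediate from the chain of inequalities above.
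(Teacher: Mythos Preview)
Your proposal is correct and follows essentially the same route as the paper: rewrite $\mathcal{N}(\phi)-\mathcal{N}(\psi)$ via $L_{\omega_0}^{-1}$, identify $DQ_{\omega_0}(\chi)=L_{\omega_\chi}-L_{\omega_0}$, and invoke Lemma~\ref{contraction lemma} along the segment after using the embedding $C^{4,\alpha}_{\delta-4}\hookrightarrow C^{4,\alpha}_{-2}$. The only cosmetic difference is that the paper appeals to a one-point mean value theorem whereas you use the integral form of the fundamental theorem of calculus; your version is in fact the more honest one in a Banach-space setting.
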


\begin{proof}
Since the operator $\mathcal{N}$ is defined by $\mathcal{N}(\phi) := - L_{\omega_{0}}^{-1}( S(\omega_{0}) + Q_{\omega_{0}}(\phi) )$, we have
$$
\mathcal{N}(\phi) - \mathcal{N}(\psi) = - L_{\omega_{0}}^{-1}( Q_{\omega_{0}}(\phi) - Q_{\omega_{0}}(\psi) ).
$$
The mean value theorem implies that there exists $\chi = t \phi + (1 - t) \psi$ for $t \in [0,1]$ such that
$$
DQ_{\omega_{0},\chi} (\phi - \psi) = Q_{\omega_{0}}(\phi) - Q_{\omega_{0}}(\psi),
$$
and the direct computation implies that
$$
DQ_{\omega_{0},\chi} = L_{\omega_{\chi}} - L_{\omega_{0}}.
$$
We know that  $||\phi||_{C^{4,\alpha}_{-2}} \leq||\phi||_{C^{4,\alpha}_{\delta -4}} \leq c_{0}$.\
Using Lemma \ref{contraction lemma}, we finish the proof.
\end{proof}

The following Proposition implies that the existence of a complete scalar-flat {\kah} metric.\

\begin{prop}
\label{key proposition 2}
Set
$$
\mathcal{U} := \left\{ \phi \in C^{4,\alpha}_{\delta - 4} : ||\phi||_{C^{4,\alpha}_{\delta -4}} \leq c_{0} \right\}.
$$
If Condition \ref{condition A} and Condition \ref{condition B} hold, the operator $\mathcal{N}$ is a contraction on $\mathcal{U}$ and $\mathcal{N}(\mathcal{U}) \subset \mathcal{U}$.\
\end{prop}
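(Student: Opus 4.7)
The plan is to reduce everything to a direct application of Conditions \ref{condition A} and \ref{condition B}, using Lemma \ref{mean value} as the workhorse. The contraction statement itself is essentially immediate from Lemma \ref{mean value}, because any $\phi, \psi \in \mathcal{U}$ automatically satisfy $\|\phi\|_{C^{4,\alpha}_{\delta-4}}, \|\psi\|_{C^{4,\alpha}_{\delta-4}} \leq c_0$, so the only real content left is the self-map property $\mathcal{N}(\mathcal{U}) \subset \mathcal{U}$.

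For this, I would first observe that by construction $Q_{\omega_0}(0) = 0$, because $Q_{\omega_0}$ is the quadratic remainder in the Taylor expansion of $\phi \mapsto S(\omega_0 + \dol \phi)$ at $\phi = 0$. Consequently $\mathcal{N}(0) = - L_{\omega_0}^{-1} S(\omega_0)$. Condition \ref{condition A} supplies the bound $\|L_{\omega_0}^{-1}\|_{C^{0,\alpha}_\delta \to C^{4,\alpha}_{\delta-4}} \leq \hat{K}^{-1}$, and Condition \ref{condition B} gives $\|S(\omega_0)\|_{C^{0,\alpha}_\delta} < c_0 \hat{K}/2$. Combining these,
\[
\|\mathcal{N}(0)\|_{C^{4,\alpha}_{\delta-4}} \leq \hat{K}^{-1} \|S(\omega_0)\|_{C^{0,\alpha}_\delta} < \frac{c_0}{2}.
\]

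With this estimate in hand, for any $\phi \in \mathcal{U}$ the triangle inequality together with the contraction estimate from Lemma \ref{mean value} (applied to the pair $(\phi, 0)$, both lying in $\mathcal{U}$) yields
\[
\|\mathcal{N}(\phi)\|_{C^{4,\alpha}_{\delta-4}} \leq \|\mathcal{N}(\phi) - \mathcal{N}(0)\|_{C^{4,\alpha}_{\delta-4}} + \|\mathcal{N}(0)\|_{C^{4,\alpha}_{\delta-4}} \leq \frac{1}{2}\|\phi\|_{C^{4,\alpha}_{\delta-4}} + \frac{c_0}{2} \leq c_0,
\]
so $\mathcal{N}(\phi) \in \mathcal{U}$. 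Thus $\mathcal{N}(\mathcal{U}) \subset \mathcal{U}$, and the restriction of $\mathcal{N}$ to $\mathcal{U}$ is a $\tfrac{1}{2}$-contraction directly by Lemma \ref{mean value}.

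The main obstacle here is purely bookkeeping: the difficulty has been concentrated in Lemma \ref{mean value} (which demands the careful comparison of $L_{\omega_\chi}$ and $L_{\omega_0}$ in the weighted Hölder norms via the asymptotically conical structure) and in the verification of Condition \ref{condition A} itself (the invertibility of $L_{\omega_0}$, addressed through Proposition \ref{key proposition 1} and the smallness of $S(\omega_0)$). Once those are granted, the calibration of the radius $c_0$ of $\mathcal{U}$ against the smallness bound $c_0 \hat{K}/2$ on $\|S(\omega_0)\|_{C^{0,\alpha}_\delta}$ is transparent, and the Banach fixed point theorem applied to $\mathcal{N}: \mathcal{U} \to \mathcal{U}$ produces the unique $\phi \in \mathcal{U}$ with $\phi = \mathcal{N}(\phi)$, equivalently $S(\omega_0 + \dol \phi) = 0$, completing the proof of Theorem \ref{complete scalar-flat}.
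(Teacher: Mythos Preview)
Your proof is correct and follows essentially the same approach as the paper: the contraction property is read off directly from Lemma \ref{mean value}, and the self-map property is obtained via the triangle inequality split $\|\mathcal{N}(\phi)\| \leq \|\mathcal{N}(\phi)-\mathcal{N}(0)\| + \|\mathcal{N}(0)\|$, bounding the first term by $c_0/2$ using Lemma \ref{mean value} and the second by $c_0/2$ using Condition \ref{condition A} and Condition \ref{condition B}. Your additional remark that $Q_{\omega_0}(0)=0$ makes the identification $\mathcal{N}(0)=-L_{\omega_0}^{-1}S(\omega_0)$ explicit, which the paper leaves implicit.
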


\begin{proof}
By the condition $||\phi||_{C^{4,\alpha}_{\delta -4}} \leq c_{0}$, Lemma \ref{contraction lemma} and Lemma \ref{mean value} obviously imply that $||\phi||_{C^{4,\alpha}_{-2}} \leq c_{0}$, $\mathcal{N}$ is a contraction on $\mathcal{U}$.\
Immediately, we have
$$
||\mathcal{N}(\phi)||_{C^{4,\alpha}_{\delta - 4}} \leq ||\mathcal{N}(\phi) - \mathcal{N}(0) ||_{C^{4,\alpha}_{\delta - 4}}  + || \mathcal{N}(0) ||_{C^{4,\alpha}_{\delta - 4}}.
$$
From the previous lemma, we obtain:\
$$
||\mathcal{N}(\phi) - \mathcal{N}(0) ||_{C^{4,\alpha}_{\delta - 4}} \leq \frac{1}{2} c_{0}.
$$
The hypothesis in this proposition implies that
\begin{eqnarray*}
|| \mathcal{N}(0) ||_{C^{4,\alpha}_{\delta - 4}}
&\leq& \hat{K}^{-1} || S(\omega_{0}) ||_{C^{0,\alpha}_{\delta}} \leq \frac{1}{2} c_{0}.
\end{eqnarray*}
Thus, $\mathcal{N}(\phi) \in \mathcal{U}$.
\end{proof}

{\it Proof of Theorem \ref{complete scalar-flat}.}
If Condition \ref{condition A} and Condition \ref{condition B} hold, Proposition \ref{key proposition 2} implies that there is a unique $\phi_{\infty} := \lim_{i \to \infty} \mathcal{N}^{i}(\phi)$ for any $\phi \in \mathcal{U} \subset C^{4,\alpha}_{\delta - 4}$ satisfying $\phi_{\infty} = \mathcal{N}(\phi_{\infty})$.\
Therefore, $\omega_{0} + \dol \phi_{\infty}$ is a complete scalar-flat {\kah} metric on $X \setminus D$.

\sq


%
%



\bigskip
\address{
(CURRENT ADDRESS)\\
Osaka Prefectural Abuno High School,\\
3-38-1, Himuro-chou, Takatsuki-shi,\\
Osaka, 569-1141\\
Japan
}
{takahiro.aoi.math@gmail.com}

\bigskip
\address{
(OLD ADDRESS)\\
Department of Mathematics\\
Graduate School of Science\\
Osaka University\\
Toyonaka 560-0043\\
Japan
}
{t-aoi@cr.math.sci.osaka-u.ac.jp}

\end{document}